\newtheorem{theorem}{Theorem}[section]
\newtheorem{lemma}[theorem]{Lemma}
\theoremstyle{definition}
\newtheorem{notation}[theorem]{Notation}
\newtheorem{example}[theorem]{Example}
\newtheorem{remark}[theorem]{Remark}
\numberwithin{equation}{section}
\newcommand{\Real}{{\mathbb R}}
\newcommand{\Rational}{{\mathbb Q}}
\newcommand{\Complex}{{\mathbb C}}
\newcommand{\Integral}{{\mathbb Z}}
\title{On Hempel pairs and Turaev--Viro invariants}
\author[Yi Liu]{%
        Yi Liu} 
\address{%
        Beijing International Center for Mathematical Research, Peking University\\
				Beijing 100871, China P.R.} 
\email{% 
    liuyi@bicmr.pku.edu.cn}
\thanks{Partially supported by NSFC Grant 11925101, 
and National Key R\&D Program of China 2020YFA0712800}
\subjclass[2020]{Primary 57K31; Secondary 57K16}
\keywords{profinite completion, mapping class group, Turaev--Viro invariant}
\date{% 
 \today} 
\begin{document}

\begin{abstract}
	Surface bundles arising from periodic mapping classes 
	may sometimes have non-isomorphic, 
	but profinitely isomorphic fundamental groups.
	Pairs of this kind have been discovered by Hempel.
	This paper exhibits examples of nontrivial Hempel pairs
	where the mapping tori can be distinguished by some Turaev--Viro invariants,
	and also examples 
	where they cannot be distinguished by any Turaev--Viro invariants.
\end{abstract}

\maketitle

\section{Introduction}

Let $S$ be a connected closed orientable surface.
Denote by $\mathrm{Mod}(S)$ the mapping class group of $S$,
whose elements are the isotopy classes of orientation-preserving self-homeomorphisms of $S$.

A \emph{Hempel pair} as we call
refers to a pair of periodic mapping classes $[f_A],[f_B]\in\mathrm{Mod}(S)$
of identical order $d\geq1$,
such that $[f_B]=[f_A^k]$ holds for some integer $k$ coprime to $d$.
Hence $[f_A]=[f_B^{k^*}]$ holds for any congruence inverse $k^*$ of $k$ modulo $d$,
(that is, $k^*k\equiv1\bmod d$).
Hempel studied such pairs \cite{Hempel_quotient},
finding out that 
the fundamental groups of their mapping tori $M_A$ and $M_B$ 
always have identical collections of (isomorphism types of) finite quotient groups.
This is equivalent to saying that the profinite completions of $\pi_1(M_A)$ and $\pi_1(M_B)$
are isomorphic groups.
Hempel discovered examples of such pairs
with non-isomorphic fundamental groups.
This is equivalent to saying that $M_A$ and $M_B$ are not homeomorphic $3$--manifolds.

We call $[f_A]$ and $[f_B]$ a \emph{nontrivial} Hempel pair,
if $M_A$ and $M_B$ are not homeomorphic.
There are no nontrivial Hempel pairs when $S$ is a sphere or a torus,
as the condition forces $d=1$ or $d\in \{1,2,3,4,6\}$, and hence $k=\pm1$.
One may obtain a nontrivial Hempel pair of order $5$ when $S$ has genus $2$.
Nontrivial Hempel pairs are a source of distinct $3$--manifold pairs
that cannot be distinguished by their profinite fundamental groups.
Among $3$--manifolds,
the question as to which topological invariants are determined by
the profinite fundamental group has stimulated a lot of fruitful study 
in recent years. See \cite{Reid_survey} and \cite[Section 9]{LS_survey}
for past surveys on that fast-growing topic.

Turaev--Viro invariants are topological invariants of closed $3$--manifolds,
originally constructed using quantum 6j-symbols \cite{TV_invariant}.
That they are generally \emph{not} profinite invariants
%(nor even homotopy invariants) 
is evident from their explicit values on all lens spaces \cite{Sokolov_lens}.
Meanwhile, there are many homeomorphically distinct 
torus bundles over a circle, 
whose monodromies are Anosov and mutually conjugate up to inverse 
in every congruence quotient of $\mathrm{Mod}(T^2)\cong\mathrm{SL}(2,\Integral)$.
These torus bundles have isomorphic profinite fundamental groups.
Funar shows that no Turaev--Viro invariants 
(associated to any spherical fusion categories) distinguish these torus bundles
\cite[Proposition 1.1]{Funar_torus_bundles}.

In this paper, we take up the question
as to whether Turaev--Viro invariants distinguish nontrivial Hempel pairs.
We shall content ourselves 
with the $\mathrm{SU}(2)$ and the $\mathrm{SO}(3)$ Turaev--Viro invariants,
as they are mentioned the most often.
The $\mathrm{SU}(2)$ series can be fully listed as
$\mathrm{TV}_{r,s}$, for any integer $r\geq3$ and any integer $s$ coprime to $r$;
the $\mathrm{SO}(3)$ series can be fully listed as
$\mathrm{TV}'_{r,s}$, for any odd integer $r\geq3$ 
and any even integer $s$ coprime to $r$.
More economically, one could focus on 
$\mathrm{TV}_{r,1}$ ($r$ even), and $\mathrm{TV}'_{r,r-1}$ ($r$ odd),
together with $\mathrm{TV}_{3,1}$ and $\mathrm{TV}_{3,2}$,
which depend only on the $\Integral/2\Integral$ cohomology ring.
These determine all the other ones. 
See Section \ref{Sec-tv} for the notations and more review.

Our conclusion can be summarized as follows.

\begin{theorem}\label{main_hempel}\
\begin{enumerate}
\item For any integer $d\geq5$ other than $6$, 
there exists some nontrivial Hempel pair of order $d$,
such that the mapping tori can be distinguished by 
some $\mathrm{SU}(2)$ Turaev--Viro invariant,
and if $d$ is odd, also by some $\mathrm{SO}(3)$ Turaev--Viro invariant.
%$g=r-1$
\item For any prime integer $p\geq5$, 
there exists some nontrivial Hempel pair of order $p$,
such that the mapping tori cannot be distinguished 
by any $\mathrm{SU}(2)$ or $\mathrm{SO}(3)$ Turaev--Viro invariants.
%$g=(p-1)/2$
\end{enumerate}
\end{theorem}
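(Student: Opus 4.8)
The plan is to compute the relevant Turaev--Viro invariants through the underlying Reshetikhin--Turaev TQFT and to exploit the Galois symmetry of the $\mathrm{SU}(2)$/$\mathrm{SO}(3)$ modular categories. Since $[f_A]$ is periodic of order $d$, its mapping torus $M_A$ is a Seifert fibered space over the quotient orbifold $S/\langle f_A\rangle$, the exceptional fibers recording the branch points of the branched cover $S\to S/\langle f_A\rangle$ together with their rotation numbers in $\mathbb Z/d$. Passing from $f_A$ to $f_B=f_A^k$ leaves the base orbifold and the exceptional fibers unchanged, but multiplies every rotation number by $k$.

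First I would record the trace formula for mapping tori. For any modular category $\mathcal C$ in the $\mathrm{SU}(2)$ or $\mathrm{SO}(3)$ family, with projective quantum representation $\rho_{\mathcal C}$ of $\mathrm{Mod}(S)$ on $V_{\mathcal C}(S)$, one has $\mathrm{RT}_{\mathcal C}(M_f)=\mathrm{Tr}\,\rho_{\mathcal C}(f)$ up to a framing phase, and hence $\mathrm{TV}_{r,s}(M_f)=|\mathrm{Tr}\,\rho_{r,s}(f)|^2$ after the standard normalization. Because $f$ has order $d$, the operator $\rho_{r,s}(f)$ satisfies $\rho_{r,s}(f)^d=\omega\cdot\mathrm{Id}$ for a root of unity $\omega$, so its eigenvalues take the form $\eta\,\zeta_d^{a_i}$ with $\zeta_d=e^{2\pi i/d}$ and exponents $a_i\in\mathbb Z/d$ computable from the rotation data by the Verlinde-type fixed-point formula. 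Two facts then combine: passing to the $k$-th power replaces the eigenvalues by their $k$-th powers, while the Galois automorphism $\sigma_k$ carrying $\mathrm{TV}_{r,s}$ to $\mathrm{TV}_{r,ks}$ sends each root of unity to its $k$-th power. Provided $d$ divides the order of the roots of unity governing level $r$ (which I arrange by letting $r$ run over suitable multiples of $d$), these two operations agree on the eigenvalue multiset, yielding the key reindexing identity
\[
\mathrm{TV}_{r,s}(M_B)=\mathrm{TV}_{r,ks}(M_A)\qquad\text{for all admissible }(r,s).
\]
Consequently the Turaev--Viro invariants distinguish $M_A$ from $M_B$ if and only if the function $s\mapsto\mathrm{TV}_{r,s}(M_A)$ fails to be invariant under multiplication by $k$ for some admissible $(r,s)$.

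For Part (1) I would exhibit, for each $d\ge5$ with $d\ne6$, an explicit $\mathbb Z/d$-action on a closed surface---realized as a cyclic branched cover of $S^2$ or a low-genus surface with a controlled set of cone points---whose rotation data $\{a_i\}$ makes some power sum $\sum_i\zeta_d^{s a_i}$ change in absolute value under $s\mapsto ks$. The reindexing identity then gives $\mathrm{TV}_{r,s}(M_A)\ne\mathrm{TV}_{r,s}(M_B)$ for that $(r,s)$. Nontriviality of the pair is checked separately, using the classification of Seifert fibered spaces: $M_A\cong M_B$ would force the multiset of rotation numbers to be carried to its $k$-multiple by a permutation together with integral shifts and a possible global sign, and I choose the rotation data so that this number-theoretic equivalence fails. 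For odd $d$ the same rotation data can be fed into the $\mathrm{SO}(3)$ family $\mathrm{TV}'_{r,s}$ (admissible since $r$ is odd), giving the claimed $\mathrm{SO}(3)$ refinement.

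For Part (2), with $d=p$ an odd prime $\ge5$, the goal is the opposite: a nontrivial Hempel pair for which $s\mapsto\mathrm{TV}_{r,s}(M_A)$ is invariant under multiplication by $k$ for every admissible $(r,s)$ simultaneously. I would engineer the $\mathbb Z/p$-action so that its eigenvalue-exponent multiset $\{a_i\bmod p\}$ is stable, as a multiset, under multiplication by $k$---for instance by assembling the cone points so that their rotation numbers fill out complete orbits of $\langle k\rangle\le(\mathbb Z/p)^{\times}$---which forces $\sum_i\zeta_p^{ka_i}=\sum_i\zeta_p^{a_i}$ and hence the exact coincidence $\mathrm{RT}_{r,s}(M_A)=\mathrm{RT}_{r,ks}(M_A)$ at every level. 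At the same time the geometric Seifert data is arranged to be non-$k$-invariant under the permutation-plus-shift-plus-sign moves, so that $M_A\not\cong M_B$ remains genuinely nontrivial. The main obstacle is precisely this Part~(2) balancing act: securing the quantum multiset symmetry \emph{uniformly across all infinitely many levels} $r$ while simultaneously violating the geometric equivalence. This cannot be reduced to checking finitely many invariants, and the crux of the argument is to isolate a structural, orbit-theoretic symmetry of the rotation data that propagates to every member of the $\mathrm{SU}(2)$ and $\mathrm{SO}(3)$ families at once, yet is invisible to the Seifert classification.
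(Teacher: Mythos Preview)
Your Part~(2) contains a genuine gap. You propose to make the ``eigenvalue-exponent multiset'' stable under multiplication by $k$ by having the cone-point rotation numbers fill out complete $\langle k\rangle$-orbits in $(\mathbb Z/p)^\times$. But the rotation numbers $b_j$ are fixed geometric data, while the eigenvalues of $\rho^{\mathrm{TV}}_{r,s}(f_A)$ live in a space whose dimension grows with $r$; the Verlinde fixed-point formula expresses the trace as a sum of contributions indexed by cone points, not as a sum of eigenvalues equal to $\zeta_p^{b_j}$, so there is no mechanism by which orbit-stability of the $b_j$ forces stability of the eigenvalue multiset at every level. Worse, if the multiset $\{b_1,\dots,b_n\}\bmod p$ really is $\langle k\rangle$-stable, then (using $\sum_j b_j=0$) the symbols of $M_A$ and $M_B$ differ only by a permutation together with admissible shifts, hence $M_A\cong M_B$ and the pair is trivial. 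You flag this tension but do not resolve it; the quantum symmetry you ask for and the geometric asymmetry you need are incompatible in the way you set things up.

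The paper's argument is quite different. It splits on whether $p\mid r$. When $\gcd(r,p)=1$, the eigenvalues of $\rho^{\mathrm{TV}}_{r,s}(f_A)$ are $p$-th roots of unity while the trace lies in $\mathbb Q(e^{\pi i s/r})$; since these cyclotomic fields meet only in $\mathbb Q$, one gets $\mathrm{TV}_{r,s}(M_{f_A})\in\mathbb Z$ for \emph{any} periodic $f_A$ of order $p$ (Lemma~\ref{integrality}), and then Galois-invariance of a rational number forces $\mathrm{TV}_{r,s}(M_{f_A})=\mathrm{TV}_{r,s}(M_{f_A^k})$ with no constraint whatsoever on the Seifert data. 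The only levels needing a special construction are those with $p\mid r$: here the paper chooses the symbol $(g;(p,1),(p,1),(p,-2))$ and uses Hansen's explicit formula for $\tau_r$ of Seifert spaces (Theorem~\ref{main_sfs}) to show $\mathrm{TV}_{r,s}(M_A)=\mathrm{TV}_{r,s}(M_B)=0$. The point is that $\{1,1,-2\}$ cannot be written as $\{\pm b,\pm b,\pm b\}\bmod p$ for any $b$ when $p\ge5$, which lands in the vanishing case of that theorem and simultaneously certifies nontriviality of the pair. Your Part~(1) is closer in spirit---the paper also computes at level $r=d$---but the relevant quantity is not a power sum $|\sum_i\zeta_d^{sa_i}|$; for the symbol $(g;(d,1),(d,1),(d,-1),(d,-1))$ one obtains $\mathrm{TV}_{d,s}(M_A)\propto\sin^{-(4g+4)}(\pi s/d)$ versus $\mathrm{TV}_{d,s}(M_B)\propto\sin^{-(4g+4)}(\pi ks/d)$, which differ exactly when $k\not\equiv\pm1\bmod d$.
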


Theorem \ref{main_hempel} is proved in Section \ref{Sec-examples} by 
exhibiting concrete families of examples.
Our simplest distinguishable Hempel pair
exists with order $d$ on genus $d-2\geq3$.
Our simplest indistinguishable nontrivial Hempel pair
exists with prime order $p$ on genus $(p-1)/2\geq2$.

The technical heart of this paper is the following calculation.

\begin{theorem}\label{main_sfs} %without the case n<a
	Let $a\geq3$ be an integer and $b_1,\cdots,b_n$ be integers coprime to $a$.
	Let $M$ be a Seifert fiber space 
	with orientable orbifold base and orientable Seifert fibration,
	and with symbol $(g;(a,b_1),\cdots,(a,b_n))$.
	Suppose $g\geq0$, and $a>n\geq 0$, and $b_1+\cdots+b_n=0$.
	\begin{enumerate}
	\item
	If there exist some integer $b^*$ coprime to $a$ and $\nu_1,\cdots,\nu_n\in\{\pm1\}$, 
	such that $b^*b_j\equiv \nu_j\bmod a$	holds for all $j\in\{1,\cdots,n\}$, 
	then for any $s$ coprime to $a$,
	$$\mathrm{TV}_{a,s}(M)
	=\frac{a^{n+2g-2}}{2^{2n+2g-4}}\cdot\frac{1}{{\sin}^{2n+4g-4}(\pi b^*s/a)},$$
	and moreover, if $a$ is odd and $s$ is even,
	$$\mathrm{TV}'_{a,s}(M)
	=\frac{a^{n+2g-2}}{2^{2n+4g-4}}\cdot\frac{1}{{\sin}^{2n+4g-4}(\pi b^*s/a)}.$$
	\item
	Otherwise, for any integer $r\geq3$ divisible by $a$
	and any integer $s$ coprime to $r$,
	$$\mathrm{TV}_{r,s}(M)=0,$$
	and moreover, if $r$ is odd and $s$ is even,
	$$\mathrm{TV}'_{r,s}(M)=0.$$
	\end{enumerate}
\end{theorem}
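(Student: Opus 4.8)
The plan is to compute the underlying Reshetikhin--Turaev (WRT) invariants and then invoke the identity $\mathrm{TV}_{r,s}(M)=|\mathrm{RT}_{r,s}(M)|^2$ (and its $\mathrm{SO}(3)$ counterpart $\mathrm{TV}'_{r,s}=|\mathrm{RT}'_{r,s}|^2$), valid for any closed oriented $3$--manifold. This reduces both parts of the theorem to understanding when the WRT invariant of $M$ vanishes and, when it does not, to evaluating its modulus.

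First I would fix a surgery/plumbing presentation of $M$ adapted to the Seifert structure: a central fiber carrying the genus-$g$ base together with $n$ legs realizing the exceptional pairs $(a,b_j)$ by rational $(-a/b_j)$--surgeries. Feeding this into the surgery formula for $\mathrm{RT}$ yields a finite sum over colors $c\in\{1,\dots,a-1\}$ (i.e.\ over the simple objects of the relevant modular category at $r=a$) of the form
\[
\mathrm{RT}_{a,s}(M)=C\sum_{c=1}^{a-1}(S_{0c})^{\,2-2g-n}\prod_{j=1}^{n}g_j(c),
\]
where $S_{0c}=\sqrt{2/a}\,\sin(\pi s c/a)$, each $g_j(c)$ is the quadratic Gauss sum over $\Integral/a$ produced by the $(a,b_j)$--fiber, and $C$ absorbs the total-dimension normalization and the framing anomaly. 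The crucial simplification is that $b_1+\cdots+b_n=0$ forces the Seifert Euler number to vanish, so the twist (ribbon) contributions and the signature-correction phases collapse; what survives is a genuinely real expression once the modulus is taken. I expect the factor $(S_{0c})^{2-2g-n}$ to supply, modulus-wise, all of the $\sin$--dependence, and each $g_j(c)$, being a nondegenerate Gauss sum (here $\gcd(b_j,a)=1$ and $a>n$ keep everything nondegenerate), to contribute modulus $\sqrt{a}$.

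The heart of the argument, and the step I expect to be the main obstacle, is the resonance analysis of the product $\prod_j g_j(c)$ together with the outer sum over $c$. Completing the square in each Gauss sum shows that the $(a,b_j)$--fiber selects colors $c$ lying in a single residue class of $\Integral/a$ governed by the congruence inverse of $b_j$, and the outer color sum is nonzero precisely when these $n$ restrictions are simultaneously compatible. I would prove that compatibility is equivalent to the existence of a common $b^{*}$ with $b^{*}b_j\equiv\nu_j\bmod a$ and $\nu_j\in\{\pm1\}$ for all $j$, that is, exactly the hypothesis of case (1); the sign ambiguity $\nu_j$ reflects the charge-conjugation symmetry $c\mapsto a-c$. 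When this fails (case (2)) the residue restrictions are inconsistent, every term of the sum is killed, and $\mathrm{RT}_{a,s}(M)=0$; moreover the obstruction is a congruence mod $a$, so it persists verbatim for every level $r$ divisible by $a$, yielding $\mathrm{TV}_{r,s}(M)=0$ and its $\mathrm{SO}(3)$ analogue as claimed.

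In the resonant case the sum localizes onto the conjugate pair of colors $c\equiv\pm b^{*}\bmod a$, each contributing $S_{0,b^{*}}=\sqrt{2/a}\,\sin(\pi b^{*}s/a)$ (note $\gcd(b^{*}s,a)=1$, so this never vanishes and the answer is well defined). Taking $|\cdot|^2$, the $n$ Gauss sums contribute $a^{n}$, the localized factor contributes $(2/a)^{2-2g-n}\sin^{-(2n+4g-4)}(\pi b^{*}s/a)$, and bookkeeping the normalization $C$ (including the factor $2$ from the conjugate colors and the per-fiber factor $1/\sqrt{2a}$) assembles exactly $a^{n+2g-2}/2^{2n+2g-4}$, giving the stated value of $\mathrm{TV}_{a,s}(M)$. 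For $\mathrm{TV}'_{a,s}$ I would run the identical computation inside the $\mathrm{SO}(3)$ modular category (odd colors, a root of unity of the prescribed parity), where the localization picks out the same color $b^{*}$; the only change is the value of the global dimension entering $C$, which replaces the prefactor by $a^{n+2g-2}/2^{2n+4g-4}$, i.e.\ the $2^{-2g}$ correction. The remaining work is a careful tracking of constants and phases, the genuinely delicate point being the Gauss-sum resonance that produces the precise congruence dichotomy.
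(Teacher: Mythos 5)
Your architecture matches the paper's proof: the paper likewise computes the Witten--Reshetikhin--Turaev invariant from the Seifert surgery presentation (invoking Hansen's explicit formula rather than rederiving it), extracts the same congruence dichotomy from the vanishing of the exponential sums over $\Integral/a\Integral$ attached to the exceptional fibers, and converts to Turaev--Viro invariants via $\mathrm{TV}_{r,1}(M)/\mathrm{TV}_{r,1}(S^3)=|\tau_r(M)|^2$, Galois conjugation in $s$, and, for the $\mathrm{SO}(3)$ case, the splitting formula of Detcherry--Kalfagianni--Yang combined with a cohomological computation of $\mathrm{TV}_{3,1}$ and $\mathrm{TV}_{3,2}$.

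However, there is a genuine gap in your resonant-case evaluation, and it sits exactly where the hypotheses $a>n$ and $b_1+\cdots+b_n=0$ do their real work --- work you have assigned to the wrong places. After localization at level $r=a$, the two surviving summands (colors $\gamma\equiv b^*$ and $\gamma\equiv -b^*$ modulo $a$, with opposite sign vectors $\boldsymbol{\mu}=\mp\boldsymbol{\nu}$) are complex numbers $z_1,z_2$, each carrying a sign $\prod_j\mu_j$ and a residual phase $e^{-\sqrt{-1}\,\pi\gamma(\sum_j\mu_j)/a^2}$ that depends on the summation variables and therefore cannot be absorbed into your global constant $C$. Your bookkeeping ``factor $2$ from the conjugate colors'' presumes $z_1=z_2$, i.e.\ constructive interference; $|z_1+z_2|^2=4|z_1|^2$ fails otherwise. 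The paper proves $z_1=z_2$ in Lemma \ref{sfs_nonvanishing}: writing $m$ for the number of $j$ with $\nu_j=+1$, the hypothesis $\sum_j b_j=0$ gives $(2m-n)b\equiv 0\bmod a$ with $b$ invertible, so $a\mid(n-2m)$, and then $a>n\geq|n-2m|$ forces $n=2m$; only then do the residual phases become $1$ and the two signs $(-1)^{n-m}$ and $(-1)^m$ agree. In your proposal, $\sum_j b_j=0$ is used only to kill the framing/signature anomaly, and $a>n$ is claimed to ``keep the Gauss sums nondegenerate,'' which is incorrect (nondegeneracy needs only $\gcd(b_j,a)=1$); neither use touches the interference problem. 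A second, smaller gap: your $\mathrm{SO}(3)$ assertion that ``only the global dimension changes'' must account for a genus-dependent factor $2^{2g}$, which is not a mere change of normalization constant; the paper instead obtains the $\mathrm{TV}'$ values from the splitting formula $\mathrm{TV}_{r,s}=\mathrm{TV}_{3,2}\cdot\mathrm{TV}'_{r,s}$ together with the computation $\mathrm{TV}_{3,1}(M)=\mathrm{TV}_{3,2}(M)=2^{2g}$ of Lemma \ref{sfs_cohomology}, which is also what lets it transfer the vanishing statement of case (2) to $\mathrm{TV}'$.
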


See Section \ref{Sec-periodic_mapping_tori} 
for review on Seifert fiber spaces and 
the standard notation for their symbols.

Theorem \ref{main_sfs} is proved in Section \ref{Sec-calculation},
by applying a general formula
for calculating the Witten--Reshetikhin--Turaev invariant $\tau_r$ 
of oriented closed Seifert fiber spaces.
The exact formula we invoke is due to Hansen \cite{Hansen_sfs},
while similar calculation in special cases or with likewise strategy
also appear in many other places,
for instance, see \cite{Andersen_rt,LR_sfs,Rozansky_sfs,Taniguchi_sfs}.

Theorem \ref{main_sfs} is formulated by first testing samples of
Seifert fiber spaces (on computer), 
and then observing interesting phenomena.
Luckily, we find the assumptions as in Theorem \ref{main_sfs},
which greatly simplify the situation and the answer.

In the appendix, we prove a splitting formula 
$$\mathrm{TV}_{r,s}(M)=\mathrm{TV}_{3,1}(M)\cdot\mathrm{TV}'_{r,r-s}(M)$$
for $r$ odd and $s$ odd.
This complements a former formula 
$$\mathrm{TV}_{r,s}(M)=\mathrm{TV}_{3,2}(M)\cdot\mathrm{TV}'_{r,s}(M)$$
for $r$ odd and $s$ even,
proved by Detcherry, Kalfagianni, and Yang \cite[Theorem 2.9]{DKY}.
Our proof in the appendix automatically includes the non-orientable case,
although the orientable case suffices for our application.

%We end up our introduction with the following interesting question,
%which appears to be unaddressed in the literature.
%
%\begin{question}\label{profinite_invariance}
%Are $\mathrm{TV}_{3,2}$ and $\mathrm{TV}_{4,1}$ profinite invariants
%among aspherical closed $3$--manifolds?
%\end{question}

This paper is organized as follows.
In Section \ref{Sec-periodic_mapping_tori}, 
we recall the prelimiary description of periodic mapping tori
as Seifert fiber spaces with vanishing rational Euler number.
In Section \ref{Sec-tv},
we review Turaev--Viro invariants.
In Section \ref{Sec-calculation}, we prove Theorem \ref{main_sfs}.
In Section \ref{Sec-examples}, we prove Theorem \ref{main_hempel}.
In Section \ref{Sec-splitting}, 
we give an elementary proof of 
the aforementioned splitting formula regarding 
$\mathrm{SU}(2)$ Turaev--Viro invariants at odd $r$ and odd $s$.

\subsection*{Acknowledgment} 
The author would like to thank Yuxuan Yang for valuable comments
on a preliminary version of this paper.

\section{Periodic mapping tori}\label{Sec-periodic_mapping_tori}
Let $S$ be a connected orientable closed surface.
The mapping class group $\mathrm{Mod}(S)$ 
consists of all the isotopy classes of orientation-preserving self-homeomorphism of $S$.
For any mapping class $[f]\in\mathrm{Mod}(S)$,
we denote by $M_f$ the mapping torus
$$M_f=\frac{S\times\Real}{(f(x),r)\sim (x,r+1)},$$
which naturally fibers over the oriented circle $\Real/\Integral$
with fiber type $S$ and (backward) monodromy type $[f]$.
The mapping torus $M_f$ is a connected orientable closed $3$--manifold,
whose homeomorphism type depends only on $[f]$.
%The inverse $[f^{-1}]$ gives rise to a mapping torus $M_{f^{-1}}$
%that is homeomorphic to $M_f$ (with reversed orientation).

A \emph{periodic mapping class} refers to a mapping class of finite order.
In this case, 
the mapping torus is a Seifert fiber space.
The Seifert fibration is orientable over an orientable base orbifold,
and has vanishing rational Euler number.
Moreover, 
the Seifert fibration on any periodic mapping torus is unique up to isotopy.
Conversely, any closed Seifert fiber space
with orientable orbifold base and orientable Seifert fibration 
of vanishing rational Euler number arises as 
the mapping torus of some periodic mapping class.
Moreover, 
the genus of surface and 
the conjugacy class of the periodic mapping class
up to inverse are both unique.
This way, periodic mapping classes can be described
equivalently by indicating 
the symbol of its mapping torus, as a Seifert fiber space.
(See \cite[Chapter 1]{AFW_book_group}).

The most general symbol describes 
any (connected, compact) Seifert fiber space with all features,
allowing possibly 
non-orientable orbifold base, non-orientable Seifert fibration, and non-empty boundary.
For dicussing periodic mapping classes,
we only need to consider closed Seifert fiber spaces
with orientable orbifold base and orientable Seifert fibration,
whose (possibly non-normalized) symbol is denoted as
\begin{equation}
(g;(a_1,b_1),\cdots,(a_n,b_n))
\end{equation}
where $g\geq0$ is an integer, and 
where $a_j\geq1$ is an integer and $b_j$ is an integer coprime to $a_j$,
for all $j\in\{1,\cdots,n\}$.
This symbol presents 
a Seifert fiber space (with standard orientation) constructed as follows.

Take a product $3$--manifold
$\Sigma\times S^1$
of a connected closed oriented surface $\Sigma$ of genus $g$
and an oriented circle $S^1$;
take $n$ disjoint embedded disks $D_1,\cdots,D_n\subset\Sigma$;
remove the solid tori $D_j\times S^1$ from $\Sigma\times S^1$,
and refill with solid tori in other ways,
such that the slopes $a_j[\partial D_j]+b_j[S^1]$ 
on $\partial D_j\times S^1$ bound disks 
in the new solid tori.

The base of this Seifert fiber space is a connected, closed, oriented
$2$--orbifold of genus $g$ with $n$ cone points of order $a_1,\cdots,a_n$,
(ordinary and negligible if $a_j=1$).
Its orbifold Euler characteristic equals
$2-2g-\sum_j(1-1/a_j)$.
The rational Euler number of the Seifert fibration
equals $-\sum_j b_j/a_j$,
where the minus sign comes from our convention
on the orientation of $\partial D_j$, 
(as induced by the orientation of $D_j$, rather than $S\setminus (D_1\cup\cdots\cup D_n)$).

The following operations of the symbol 
do not change the homeomorphism type of the resulting Seifert fiber space:
re-ordering all $(a_j,b_j)$; 
inserting or deleting a term $(1,0)$;
replacing one $(a_j,b_j)$ with $(a_j,b_j+a_j)$ 
and another $(a_{j'},b_{j'})$ with $(a_{j'},b_{j'}-a_{j'})$ simultaneously;
or replacing all $(a_j,b_j)$ with $(a_j,-b_j)$ simultaneously.
Note that only the last operation 
changes the orientation-preserving homeomorphism type
of the resulting Seifert fiber space.

For homeomorphic periodic mapping tori,
their symbols are all related by finitely many steps of the above operations.
This is a special case of the classification of Seifert fiber spaces
(see \cite[Theorem 1.10]{Orlik_book_sfs} or \cite[Chapter 12]{Hempel_book}).

The following Lemmas \ref{mapping_torus_sfs} and \ref{mapping_torus_iterate}
actually appear in \cite{Hempel_quotient} in equivalent forms.
We include quick (and slightly different) proofs here for the reader's reference.

\begin{lemma}\label{mapping_torus_sfs}
Let $M$ be a closed Seifert fiber space with orientable orbifold base and orientable Seifert fibration,
and with symbol $(g;(a_1,b_1),\cdots,(a_n,b_n))$.
If the Seifert fibration has vanishing rational Euler characteristic,
then $M$ is homeomorphic to the mapping torus $M_f$
of a periodic mapping class $[f]\in\mathrm{Mod}(S)$.
Moreover,
$[f]$ has order $d=\mathrm{lcm}(a_1,\cdots,a_n)$,
and $S$ has genus $1+(g-1)d+\sum_j(1-1/a_j)d/2$.
\end{lemma}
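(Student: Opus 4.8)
The plan is to recover the mapping torus structure directly from the Seifert fibration and then read off the two numerical invariants from the orbifold data. First I would pass to the base orbifold $\mathcal{O}$ of the given Seifert fibration, a closed oriented $2$--orbifold of genus $g$ with cone points of orders $a_1,\cdots,a_n$. The hypothesis that the rational Euler number vanishes, $-\sum_j b_j/a_j=0$, is exactly the condition that the circle bundle over $\mathcal{O}$ carries a horizontal foliation, equivalently that $M$ fibers over $S^1$ with fiber transverse to the Seifert circles. Concretely, I would exhibit an orbifold $\Integral/d$--cover: set $d=\mathrm{lcm}(a_1,\cdots,a_n)$ and observe that the Seifert fibration defines a map $M\to\mathcal{O}$ whose generic fiber is a circle, and the periodicity structure comes from a free $\Integral/d$--action whose quotient recovers the $S^1$ base of a fibration. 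The cleanest route is to produce a fiber surface $S\hookrightarrow M$ transverse to the fibers and an order-$d$ monodromy; the vanishing Euler number guarantees such a transverse surface exists as a section over the orbifold after passing to the canonical $d$--fold cover unwrapping all cone points.

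The technical core is the order and genus computation, which I would obtain via orbifold Euler characteristics. The fiber surface $S$ is the total space of a branched $\Integral/d$--cover of the underlying surface $\Sigma_g$ of $\mathcal{O}$, branched over the $n$ cone points, where the local degree at the point of order $a_j$ is $d/a_j$ (so there are $d/a_j$ preimage points, each a ramification point of local degree $a_j$). The monodromy $[f]$ is the deck transformation generating the $\Integral/d$--action on $S$, hence periodic of order exactly $d$ since $d$ is the least common multiple realizing a global period. For the genus, I would apply the Riemann--Hurwitz formula to the branched cover $S\to\Sigma_g$ of degree $d$:
$$\chi(S)=d\cdot\chi(\Sigma_g)-\sum_j\frac{d}{a_j}\,(a_j-1)=d(2-2g)-\sum_j d\left(1-\frac{1}{a_j}\right).$$
Translating $\chi(S)=2-2h$ where $h$ is the genus of $S$ and solving gives
$$h=1+(g-1)d+\sum_j\left(1-\frac{1}{a_j}\right)\frac{d}{2},$$
which is precisely the claimed formula.

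I expect the main obstacle to be the rigorous construction of the transverse fiber surface and the verification that the monodromy has order exactly $d$ rather than merely dividing $d$. The existence of the surface is where the vanishing-Euler-number hypothesis is indispensable: without it the Seifert fibration admits no horizontal transversal, and $M$ need not fiber over $S^1$ in the required way. I would argue existence by choosing a rational section of the circle bundle over the punctured base (the complement of small disks around the cone points) whose boundary slopes are prescribed by the $(a_j,b_j)$ data, and then checking that the Euler-number condition makes the obstruction to closing up this section into a multisection vanish, yielding a genuine $d$--fold connected cover $S$ that is a fiber. That the resulting monodromy has order exactly $d$ follows because the corresponding deck group is cyclic of order $\mathrm{lcm}(a_1,\cdots,a_n)$ by construction, the least common multiple being forced by the requirement that the cover unwraps every cone point simultaneously and minimally. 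Once the fibration and period are established, the genus follows from Riemann--Hurwitz as above, and the orientability of base and fibration ensures $[f]$ is orientation-preserving, completing the proof.
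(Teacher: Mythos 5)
Your proposal is correct in outline but takes a genuinely different route from the paper's. The paper argues algebraically: it writes down the presentation of the orbifold fundamental group $\pi_1(\mathcal{O})$ of the base and defines an explicit homomorphism onto $\Integral/d\Integral$ sending the handle generators $x_i,y_i$ to $0$ and each cone generator $s_j$ to $-b_jd/a_j$; well-definedness is exactly the vanishing of $\sum_j b_j/a_j$, the kernel gives an abstract cyclic orbifold cover $S\to\mathcal{O}$ with no cone points (using $\gcd(a_j,b_j)=1$), the deck transformation is the periodic class $[f]$, and one then has to check that $M_f$ is homeomorphic to $M$. You instead build the fiber inside $M$ as a horizontal surface with boundary slopes prescribed by the $(a_j,b_j)$, and take the first-return map along the Seifert fibers as the monodromy. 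Your route gets the homeomorphism $M\cong M_f$ essentially for free (the complement of a horizontal surface is a product $I$--bundle, by orientability), at the cost of proving existence of the horizontal surface, which is where the vanishing of the rational Euler number enters as an obstruction; the paper's route avoids that obstruction-theoretic step but must verify the ``elementary to check'' homeomorphism at the end. The genus computations are the same: your Riemann--Hurwitz count is precisely the multiplicativity of the orbifold Euler characteristic that the paper invokes, and your formula agrees with the one asserted.

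Two points in your write-up need repair. First, connectedness of $S$: vanishing of the Euler number gives existence of a degree-$d$ horizontal surface with the prescribed boundary slopes, but by itself says nothing about connectedness, so ``yielding a genuine $d$--fold connected cover'' is an assertion, not an argument. It can be fixed either arithmetically (the boundary data generate $\Integral/d\Integral$: for each prime $p$ dividing $d$ there is some $j$ whose $a_j$ has the same $p$-part as $d$, and then $b_jd/a_j$ is prime to $p$ because $\gcd(a_j,b_j)=1$; hence the images of the $s_j$ generate) or geometrically (each component of a horizontal surface is itself an orbifold cover of $\mathcal{O}$ by a manifold, hence has degree divisible by every $a_j$, hence by $d$, so a degree-$d$ horizontal surface is connected). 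This matters most when $g=0$, where there are no handle generators available to force surjectivity. Second, your claim that the order $d$ is ``forced \ldots minimally'' by the unwrapping is not the right reason: nothing forces minimality, and a cover of larger degree would serve equally well for existence. Rather, once $S$ is connected and the cover unwraps each cone point (each $s_j$ maps to an element of order exactly $a_j$, again by coprimality), the deck group is $\Integral/d\Integral$ and the first-return map generates it, so its order is exactly $d$. With these two repairs your argument is complete; note that the paper is comparably terse about surjectivity of its homomorphism, which is the same issue in algebraic guise.
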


\begin{proof}
The orbifold fundamental group of the base $\mathcal{O}$ has a presentation
with generators $x_1,y_1,\cdots,x_g,y_g,s_1,\cdots,s_n$
and relatorions $[x_1,y_1]\cdots[x_g,y_g]=s_1\cdots s_n$,
and $s_1^{a_1}=\cdots=s_n^{a_n}=1$.
Under the assumption of vanishing rational Euler number,
The assignments $x_i\mapsto 0\bmod d$, $y_i\mapsto 0\bmod d$,
and $s_j\mapsto -b_jd/a_j\bmod d$ yield a well-defined, surjective homomorphism
$\pi_1(\mathcal{O})\to\Integral/d\Integral$, where $d=\mathrm{lcm}(a_1,\cdots,a_n)$.
The kernel of the homorphism corresponds to a cyclic orbifold covering
$S\to \mathcal{O}$ of degree $d$, where $S$ has no singular cone points.
The generator $1\in\Integral/d\Integral$
corresponds to a deck transformation, 
representing a periodic mapping class $[f]\in\mathrm{Mod}(S)$ of order $d$.
It is elementary to check that $M_f$ is homeomorphic to $M$. 
The Euler characteristic of the surface $S$ 
is equal to $d$ times the rational Euler number of $\mathcal{O}$,
which implies the asserted genus of $S$.
\end{proof}

\begin{lemma}\label{mapping_torus_iterate}
Let $S$ be a connected, close, orientable surface,
and $[f]\in\mathrm{Mod}(S)$ be a periodic mapping class of order $d$.
If the mapping torus $M_f$ has symbol $(g;(a_1,b_1),\cdots,(a_n,b_n))$,
then for any integer $k$ coprime to $d$,
the mapping torus $M_{f^k}$ of the iterate $[f^k]\in\mathrm{Mod}(S)$
has symbol $(g;(a_1,b_1k^*),\cdots,(a_n,b_nk^*))$,
where $k^*$ is any integer satisfying $kk^*\equiv1\bmod d$.
\end{lemma}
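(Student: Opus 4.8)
The plan is to reduce everything to the explicit covering construction in the proof of Lemma \ref{mapping_torus_sfs}, and to observe that passing from $[f]$ to $[f^k]$ amounts to relabelling the deck group by an automorphism of $\Integral/d\Integral$. Recall that the symbol $(g;(a_1,b_1),\cdots,(a_n,b_n))$ of $M_f$ records the base orbifold $\mathcal{O}$ (genus $g$, cone orders $a_j$) together with the surjection $\phi\colon\pi_1(\mathcal{O})\to\Integral/d\Integral$, $x_i,y_i\mapsto 0$, $s_j\mapsto -b_jd/a_j\bmod d$, whose kernel cuts out the regular cyclic orbifold cover $S\to\mathcal{O}$; here $[f]$ is precisely the deck transformation sent to the generator $1$. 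Since $k$ is coprime to $d$, the subgroup $\langle[f^k]\rangle$ equals $\langle[f]\rangle$, so $[f^k]$ defines the same cover $S\to\mathcal{O}$ and the same cone orders $a_j$. Hence I only have to determine the new second coordinates of the symbol of $M_{f^k}$, which confirms in advance that the orders $a_j$ are left unchanged.

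First I would let $\alpha\colon\Integral/d\Integral\to\Integral/d\Integral$ be multiplication by $k^*$, which is an automorphism since $k^*$ is coprime to $d$, and set $\psi=\alpha\circ\phi$. Because $\alpha$ is an automorphism, $\ker\psi=\ker\phi$, so $\psi$ classifies the very same cover $S\to\mathcal{O}$. I then read off $\psi(s_j)=k^*\cdot(-b_jd/a_j)=-(k^*b_j)d/a_j\bmod d$, so that $\psi$ is exactly the homomorphism attached by the recipe of Lemma \ref{mapping_torus_sfs} to the symbol $(g;(a_1,b_1k^*),\cdots,(a_n,b_nk^*))$. The routine verifications---that $b_jk^*$ is coprime to $a_j$ (as $k^*k\equiv1\bmod a_j$), that $\psi$ is well defined and surjective, and that the vanishing rational Euler number $\sum_j b_j/a_j=0$ is preserved under scaling all $b_j$ by $k^*$---I would dispatch in one line each.

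It remains to identify the distinguished generator for $\psi$. The group isomorphism $\bar\psi=\alpha\circ\bar\phi$ induced on the deck group satisfies $\bar\psi^{-1}(1)=\bar\phi^{-1}(\alpha^{-1}(1))=\bar\phi^{-1}(k)=[f]^k=[f^k]$, since $\alpha^{-1}$ is multiplication by $k$ and $\bar\phi^{-1}(1)=[f]$. Thus under $\psi$ the deck transformation labelled $1$ is exactly $[f^k]$, and applying Lemma \ref{mapping_torus_sfs} to the symbol $(g;(a_1,b_1k^*),\cdots,(a_n,b_nk^*))$ yields the mapping torus of its generator-$1$ deck transformation, namely $M_{f^k}$; hence $M_{f^k}$ carries that symbol. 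The one point that needs care---and the only place an error could hide---is this last bookkeeping: relabelling so that $[f^k]$ becomes the new generator $1$ forces multiplication by $k^*$ (not $k$) on the target, so that the congruence inverse $k^*$, rather than $k$ itself, appears in the second coordinates of the symbol.
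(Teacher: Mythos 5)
Your proof is correct, but it takes a genuinely different route from the paper's. The paper argues in dimension three: it uses the natural cyclic covering $M_f\to M_{f^k}$ of degree $k^*$, pulls back the Seifert fibration of $M_{f^k}$, observes that ordinary fibers of $M_f$ cover ordinary fibers of $M_{f^k}$ with degree $k^*$, and pushes the slope $a_j[\partial D_j]+b_j[S^1]$ forward to $a_j[\partial D_j]+b_jk^*[S^1]$ to read off the new symbol. You instead stay in dimension two: the cover $S\to\mathcal{O}$ and its deck group are unchanged when $f$ is replaced by $f^k$, and the entire content of the lemma becomes the bookkeeping of how the classifying surjection $\pi_1(\mathcal{O})\to\Integral/d\Integral$ from Lemma \ref{mapping_torus_sfs} must be rescaled (by multiplication by $k^*$) so that $[f^k]$ becomes the deck transformation labelled $1$; your final paragraph gets the crucial $k^*$-versus-$k$ direction right. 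What your approach buys is a purely algebraic argument that reuses Lemma \ref{mapping_torus_sfs} rather than redoing geometry; the cost is that you rely on the proof, not merely the statement, of that lemma --- in particular on the correspondence between symbols, classifying homomorphisms, and the distinguished deck transformation --- and on the tacit normalization that the given $[f]$ \emph{is} the constructed deck transformation. Strictly speaking, the hypothesis determines $[f]$ only up to conjugacy and inverse (as recalled in Section \ref{Sec-periodic_mapping_tori}); conjugation is harmless, and inversion replaces every $b_j$ by $-b_j$, which is one of the allowed symbol moves, so your conclusion is unaffected, but a one-line remark to this effect would make the reduction airtight. The paper's covering argument sidesteps this issue by working with the given $f$ directly and invoking uniqueness of the Seifert fibration on $M_f$ instead.
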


\begin{proof}
The mapping torus $M_{f^k}$ naturally cyclically cover $M_f$ of degree $k$.
Since $f$ has order $k$, 
we also see that $M_f$ cyclically covers $M_{f^k}$ of degree $k^*$.
Consider the covering $M_f\to M_{f^k}$.
The pull-back of the Seifert fibration of $M_{f^k}$ is a Seifert fibration on $M_f$.
If $M_f$ has symbol $(g;(a_1,b_1),\cdots,(a_n,b_n))$, by definition,
the preimage of $(\Sigma\setminus (D_1\cup\cdots\cup D_n))\times S^1\subset M_{f^k}$
is also a product $(\Sigma\setminus (D_1\cup\cdots\cup D_n))\times S^1\subset M_{f}$, 
while ordinary fibers (that is, $*\times S^1$) in $M_f$) 
cyclically covers the ordinary fibers in $M_{f^k}$ with degree $k^*$.
Since the slope $a_j[D_j]+b_j[S^1]$ on $\partial D_j\times S^1$ 
is homotopically trivial in $M_f$,
the slope $a_j[D_j]+b_jk^*[S^1]$ on $\partial D_j\times S^1$ is 
homotopically trivial in $M_k$. 
Therefore, $(g;(a_1,b_1k^*),\cdots,(a_n,b_nk^*))$ is a symbol of $M_{f^k}$.
\end{proof}

\section{Turaev--Viro invariants}\label{Sec-tv}
Turaev--Viro invariants are topological invariants of 
closed $3$--manifolds,
arising from representation theory
of quantum groups at roots of unity.
Throughout this paper, we only discuss
Turaev--Viro invariants 
pertaining to the most basic quantum group $\mathcal{U}_q(\mathfrak{sl}_2)$.
In this setting,
there are essentially two series,
namely, the $\mathrm{SU}(2)$ Turaev--Viro invariants 
$\mathrm{TV}_r$ for all integers $r\geq3$,
and the $\mathrm{SO}(3)$ Turaev--Viro invariants 
$\mathrm{TV}'_r$ for all odd integers $r\geq3$.

Throughout this paper, a \emph{closed} manifold
only means a compact manifold with empty boundary, 
possibly disconnected, and possibly non-orientable.

\subsection{Abstract versions}

%\subsubsection{Abstract versions} 
Let $q^{1/2}$ be a square root of a root of unity $q\neq\pm1$.
Denote by $r\geq3$ the order of $q$.
Note that $q^{1/2}$ must have order $2r$ when $r$ is even,
but may have order $2r$ or $r$ when $r$ is odd.
We denote by $\Rational(q^{1/2})$
the abstract cyclotomic field generated by $q^{1/2}$.

For any 
closed $3$-manifold $M$,
the Turaev--Viro invariant $\mathrm{TV}(M;q^{1/2})$
can be defined at $q^{1/2}$;
if $r$ is odd, the refined Turaev--Viro invariant $\mathrm{TV}'(M;q^{1/2})$
can be defined at $q^{1/2}$ of order $r$.
Both $\mathrm{TV}(M;q^{1/2})$ and $\mathrm{TV}'(M;q^{1/2})$
are topological invariants of $M$,
with totally real values in 
%the (abstract) cyclotomic field
$\Rational(q^{1/2})$.
We call
$\mathrm{TV}$ and $\mathrm{TV}'$ 
the \emph{abstract} $\mathrm{SU}(2)$ and 
the \emph{abstract} $\mathrm{SO}(3)$ Turaev--Viro invariants at $q^{1/2}$
(or at level $r-2$), respectively.

Most details about the actual construction of these invariants 
are not be necessary in the sequel, except the appendix.
We record them below for the sake of clarity.
%As in the original treatment of Turaev and Viro \cite{TV_invariant},
%we normalize these invariants to
%$$\mathrm{TV}(S^2\times S^1;q^{1/2})=1,$$
%and if $r$ is odd,
%$$\mathrm{TV}'(S^2\times S^1;q^{1/2})=1.$$
%Some authors adopt other normalization that differs from the above by 
%a factor independent of the $3$--manifold.

Let $\mathscr{T}=(V,E,F,T)$ be any finite simplicial $3$--complex,
where the items denote the sets of vertices, edges, faces, and tetrahedra, respectively.
Denote $I_r=\{0,1,\cdots,r-2\}$.
A triple $(i,j,k)\in I_r\times I_r\times I_r$ is said to be \emph{admissible}
if the numbers $(i+j+k)/2$, $(i+j-k)/2$, $(j+k-i)/2$, $(k+i-j)/2$
all stay in $I_r$.
An \emph{admissible coloring} of $(M,\mathscr{T})$ (of \emph{level} $r-2$)
refers to a map $c\colon E\to I_r$,
such that on any face, the edge colors (that is, values of $c)$
form an admissible triple.
Denote by $\mathcal{A}_r=\mathcal{A}_r(M,\mathscr{T})$
the set of all admissible colorings of level $r-2$.
If $r$ is odd,
denote by $I'_r=\{0,2,\cdots,r-3\}$
the subset of even elements of $I_r$.
Denote by $\mathcal{A}'_r=\mathcal{A}'_r(M,\mathscr{T})$
the subset of $\mathcal{A}_r$
consisting of admissible colorings with values in $I'_r$.

Let $M$ be a (possibly disconnected, possibly non-orientable)
closed $3$--manifold.
Take a triangulation $\mathscr{T}=(V,E,F,T)$ of $M$.
Then invariants $\mathrm{TV}$ and $\mathrm{TV}'$ of $M$ at $q^{1/2}$
can be expressed in terms of $\mathscr{T}$ as follows:
\begin{equation}\label{tv_def}
\mathrm{TV}\left(M;q^{1/2}\right)=\left(\frac{(q^{1/2}-q^{-1/2})^2}{-2r}\right)^{|V|}\cdot\sum_{c\in\mathcal{A}_r}
\left|\mathscr{T}\right|_c;
\end{equation}
if $r$ is odd and $q^{1/2}$ has order $r$, 
\begin{equation}\label{tv_refined_def}
\mathrm{TV}'\left(M;q^{1/2}\right)=\left(\frac{(q^{1/2}-q^{-1/2})^2}{-r}\right)^{|V|}\cdot\sum_{c\in\mathcal{A}'_r}
\left|\mathscr{T}\right|_c,
\end{equation}
where $|V|$ denotes the number of vertices in $\mathscr{T}$,
and where $|\mathscr{T}|_c$ is explained in Notation \ref{color_terms} below.
Note that our notations follow the reformulation as in \cite[Appendix A]{DKY},
where the factors in $|e|_c$, $|f|_c$, and $|t|_c$ are grouped slightly differently
than those in \cite{TV_invariant}, so as to avoid unnecessary square roots.

As it turns out, the values of the above expressions in $\Rational(q^{1/2})$
are independent of the auxiliary choice of the triangulation $\mathscr{T}$  \cite{TV_invariant}. 
Therefore, 
$\mathrm{TV}(M;q^{1/2})$ and $\mathrm{TV}'(M;q^{1/2})$ are indeed topological invariants of $M$.

\begin{notation}\label{color_terms}\
\begin{enumerate}
\item
Denote
$$[n]!=
\begin{cases}
[1]\cdot[2]\cdot\cdots\cdot[n]
&n=1,2,\cdots,r-1\\
1 & n=0
\end{cases}$$
where
$$[n]=\frac{q^{n/2}-q^{-n/2}}{q^{1/2}-q^{-1/2}}.$$
Note that the quantum integers
$[1],[2],\cdots,[r-1]$ take totally real nonzero values in $\Rational(q^{1/2})$,
as $q$ is a primitive $r$-th root of unity ($r\geq3$).
\item
For any $e\in E$ and $c\in\mathcal{A}_r$,
$$|e|_c =(-1)^{i}\cdot[i+1]$$
where $i$ is the color of $e$ under $c$.
\item
For any $f\in F$ and $c\in\mathcal{A}_r$,
$$|f|_c=(-1)^{-S}\cdot\frac{[S-i]!\cdot[S-j]!\cdot[S-k]!}{[S+1]!}$$
where $i,j,k$ are the edge colors of $f$ under $c$, and where $S=(i+j+k)/2$.
\item
For any $t\in T$ and $c\in\mathcal{A}_r$, denote
$$|t|_c
=\sum_{z}\frac{(-1)^z\,[z+1]!}{\prod_a[z-T_a]!\cdot\prod_b[Q_b-z]!}$$
where $(i,j,k),(i,m,n),(j,l,n),(k,l,m)$ 
are the edge colors of the faces of $t$ under $c$,
and where $T_1=(i+j+k)/2$, $T_2=(i+m+n)/2$, $T_3=(j+l+n)/2$, $T_4=(k+l+m)/2$,
$Q_1=(i+j+l+m)/2$, $Q_2=(i+k+l+n)/2$, $Q_3=(j+k+m+n)/2$.
The index $a$ ranges in $\{1,2,3,4\}$, and $b$ in $\{1,2,3\}$,
and $z$ from $\max_a T_a$ to $\min_b Q_b$.
\item
For any $c\in\mathcal{A}_r$, denote
$$|\mathscr{T}|_c=\prod_{e\in E} |e|_c\cdot\prod_{f\in F} |f|_c\cdot \prod_{t\in T} |t|_c.$$
\end{enumerate}
\end{notation}

\subsection{Specialized versions}
In the literature, \emph{the} $\mathrm{SU}(2)$ and \emph{the} $\mathrm{SO}(3)$ Turaev--Viro invariants
often refer to specialization of the abstract versions
at customary complex roots of unity.
To be precise, these refer to the numerical quantities
$\mathrm{TV}_r$ and $\mathrm{TV}'_r$ below.

\begin{notation}\label{tv_customary}
Let $r\geq3$ be an integer and $s$ be an integer coprime to $r$.
The following expressions are all evaluated by specializing $\Rational(q^{1/2})\to\Complex$.
%The following specializations of $\mathrm{TV}$ and $\mathrm{TV}'$ all take values in $\Real$.
\begin{enumerate}
\item Denote
$$\mathrm{TV}_{r,s}(M)=\mathrm{TV}\left(M;q^{1/2}=e^{\sqrt{-1}\cdot\pi s/r}\right),$$
\item If $r$ is odd and $s$ is even, denote
$$\mathrm{TV}'_{r,s}(M)=\mathrm{TV}'\left(M;q^{1/2}=e^{\sqrt{-1}\cdot\pi s/r}\right).$$
\item In informal discussion,
we often write $\mathrm{TV}_r$ and $\mathrm{TV}'_r$, assuming $s$ implicitly fixed.
\end{enumerate}
\end{notation}

\begin{lemma}\label{tv_property}
Let $M,N$ be any closed $3$--manifolds.
\begin{enumerate}
\item $\mathrm{TV}_{r,s}(M)\in\Real$.
\item
$\mathrm{TV}_{r,s}(M\sqcup N)=\mathrm{TV}_{r,s}(M)\cdot\mathrm{TV}_{r,s}(N)$.
\item $\mathrm{TV}_{r,s}(S^2\times S^1)=1.$
\end{enumerate}
If $r$ is odd and $s$ is even, the same statements hold with $\mathrm{TV}'_{r,s}$ in place of $\mathrm{TV}_{r,s}$.
\end{lemma}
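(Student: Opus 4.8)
The plan is to verify the three properties of Lemma \ref{tv_property} directly from the state-sum formula \eqref{tv_def}, and then observe that the refined case \eqref{tv_refined_def} follows by the identical reasoning with $\mathcal{A}_r$ replaced by $\mathcal{A}'_r$ and the vertex prefactor adjusted. First I would establish part (1), reality. The key point is already recorded in Notation \ref{color_terms}: at $q^{1/2}=e^{\sqrt{-1}\cdot\pi s/r}$ the quantum integers $[n]=\sin(\pi n s/r)/\sin(\pi s/r)$ are real, so every factor $|e|_c$, $|f|_c$, and the summand defining $|t|_c$ is real, whence each $|\mathscr{T}|_c$ is real. The prefactor $\left((q^{1/2}-q^{-1/2})^2/(-2r)\right)^{|V|}$ is also real, since $(q^{1/2}-q^{-1/2})^2=(2\sqrt{-1}\sin(\pi s/r))^2=-4\sin^2(\pi s/r)$ is real. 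Summing real terms with a real prefactor gives a real value, so $\mathrm{TV}_{r,s}(M)\in\Real$.

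Next I would prove part (2), multiplicativity under disjoint union. The natural approach is to pick triangulations $\mathscr{T}_M$ and $\mathscr{T}_N$ of $M$ and $N$ and take their disjoint union $\mathscr{T}_M\sqcup\mathscr{T}_N$ as a triangulation of $M\sqcup N$; this is legitimate because the invariant is independent of the chosen triangulation. An admissible coloring of the disjoint union is exactly a pair consisting of an admissible coloring of each piece, so $\mathcal{A}_r(M\sqcup N,\mathscr{T}_M\sqcup\mathscr{T}_N)\cong\mathcal{A}_r(M,\mathscr{T}_M)\times\mathcal{A}_r(N,\mathscr{T}_N)$. Since the edge, face, and tetrahedron sets are the disjoint unions of the respective sets, the product $|\mathscr{T}|_c$ factors as $|\mathscr{T}_M|_{c_M}\cdot|\mathscr{T}_N|_{c_N}$, and the state sum factors as a product of the two state sums by distributing the product over the sum. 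The vertex count satisfies $|V|=|V_M|+|V_N|$, so the prefactor multiplies as well, and the two invariants multiply as claimed.

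For part (3), the cleanest route is to compute $\mathrm{TV}_{r,s}(S^2\times S^1)$ from its known description as a weighted count: it is standard that $\mathrm{TV}_{r,s}(M)$ of a closed oriented $3$--manifold equals $|\tau_{r,s}(M)|^2$, the squared modulus of the Witten--Reshetikhin--Turaev invariant, and $\tau_{r,s}(S^2\times S^1)=1$ since $S^2\times S^1$ is the boundary of a handlebody-free surgery datum contributing the trivial vector; alternatively one invokes that $\mathrm{TV}_{r,s}$ counts the dimension of the skein module, which for $S^2\times S^1$ is one-dimensional. The most self-contained alternative, if I wanted to avoid citing external identities, would be to triangulate $S^2\times S^1$ and evaluate the state sum directly, but that is computationally heavy; I would prefer to cite the $|\tau_r|^2$ relation, which is exactly the bridge to Hansen's formula invoked for Theorem \ref{main_sfs}.

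The main obstacle is part (3): unlike (1) and (2), it is not a formal manipulation of the state sum but requires genuine input about the value on a specific manifold, and the honest shortest argument leans on the identification $\mathrm{TV}=|\tau|^2$ together with $\tau(S^2\times S^1)=1$. The refined statements for $\mathrm{TV}'_{r,s}$ present no new difficulty, since restricting the admissible colorings to $\mathcal{A}'_r$ preserves the product structure used in (1) and (2), and the analogous normalization $\tau'_{r,s}(S^2\times S^1)=1$ supplies (3); thus I would simply remark that every step above goes through verbatim for the primed invariants.
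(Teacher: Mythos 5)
Your parts (1) and (2) are correct and essentially identical to the paper's own (very terse) argument: reality follows because the state sum is a rational expression in the quantum integers $[n]=\sin(\pi s n/r)/\sin(\pi s/r)$ and in $(q^{1/2}-q^{-1/2})^2=-4\sin^2(\pi s/r)$, all real at the given specialization, and multiplicativity follows by taking the disjoint union of triangulations, under which admissible colorings and the state sum factor. The divergence is in part (3), where the paper simply cites Turaev and Viro's original computation \cite[Section 8.1.B]{TV_invariant}, whereas you route through the Witten--Reshetikhin--Turaev bridge.

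Your route for (3) can be made to work, but as written it mixes incompatible normalizations, and this is exactly the kind of place where sign/scale conventions bite. The two facts you invoke --- $\mathrm{TV}(M)=|\tau(M)|^2$ with no normalizing factor, and $\tau(S^2\times S^1)=1$ --- are simultaneously true only in the TQFT normalization (where $Z(S^2\times S^1)=\dim V(S^2)=1$ and $Z(S^3)=1/\mathcal{D}\neq1$). In the Kirby--Melvin normalization used throughout this paper (and in Hansen's formula, which you say this ``bridges'' to), one has instead $\tau_r(S^3)=1$, $\tau_r(S^2\times S^1)=\sqrt{r/2}/\sin(\pi/r)$, and the relation carries a factor: $\mathrm{TV}_{r,1}(M)=\mathrm{TV}_{r,1}(S^3)\cdot|\tau_r(M)|^2$ (Lemmas \ref{rt_property} and \ref{tv_and_rt}). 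If you try to run your argument with the paper's conventions, you need the value $\mathrm{TV}_{r,1}(S^3)=\frac{2}{r}\sin^2(\pi/r)$ as independent input --- but in the paper that value is \emph{derived from} $\mathrm{TV}(S^2\times S^1)=1$, so within this logical structure your argument is circular. To repair it, either cite the TQFT-normalized statement from an external source (e.g.\ Turaev's book, where $\mathrm{TV}=|\tau|^2$ and $\tau(S^2\times S^1)=1$ hold verbatim), or argue via Lemma \ref{mapping_torus_tv}: $S^2\times S^1$ is the mapping torus of the identity, so $\mathrm{TV}_{r,s}(S^2\times S^1)=\dim_{\Complex}\mathcal{Z}^{\mathrm{TV}}_{r,s}(S^2)=1$; and handle general $s$ by Galois conjugation (Lemma \ref{tv_r_odd}), since the value $1$ is rational. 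Your justification of $\tau(S^2\times S^1)=1$ (``boundary of a handlebody-free surgery datum'') is also too vague to stand as a proof; the correct reason is the TQFT trace axiom $Z(\Sigma\times S^1)=\dim V(\Sigma)$. The same caveat applies to your closing remark on $\tau'_{r,s}(S^2\times S^1)=1$, which again fails in the paper's normalization ($\tau'_r(S^2\times S^1)=\sqrt{r/4}/\sin(\pi/r)$).
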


The first statement follow immediately
from the observation that $\mathrm{TV}$ and $\mathrm{TV}'$ 
can be written as rational functions over $\Rational$
in $[1],\cdots,[r-1]$ and $(q^{1/2}-q^{-1/2})^2$;
see (\ref{tv_def}), (\ref{tv_refined_def}), and Notation \ref{color_terms}.
Note $[n]=\sin(\pi sn/r)/\sin(\pi s/r)$ and $(q^{1/2}-q^{-1/2})^2=-4\sin^2(\pi s/r)$
evaluated at $q^{1/2}=e^{\sqrt{-1}\cdot\pi s/r}$.
The second statement is also obvious by definition.
The last statement appears in Turaev and Viro's original paper \cite[Section 8.1.B]{TV_invariant}.

\begin{lemma}\label{tv_r_odd}
Let $M$ be any closed $3$--manifold.
\begin{enumerate}
\item $\mathrm{TV}_{r,s}(M)=\mathrm{TV}_{r,-s}(M)=\mathrm{TV}_{r,s+2r}(M)$.
If $r$ is odd and $s$ is even, the same identities hold with $\mathrm{TV}'$ in place of $\mathrm{TV}$.
\item
%Transforming $e^{\sqrt{-1}\cdot\pi s/r}$
%to $e^{\sqrt{-1}\cdot\pi/r}$ or $e^{\sqrt{-1}\cdot\pi(r-1)/r}$ accordingly,
If $s$ is odd, $\mathrm{TV}_{r,s}(M)$ is Galois conjugate to $\mathrm{TV}_{r,1}(M)$;
if $s$ is even (hence $r$ odd),
$\mathrm{TV}_{r,s}(M)$ is Galois conjugate to $\mathrm{TV}_{r,r-1}(M)$.
If $r$ is odd and $s$ is even, the same statements hold with $\mathrm{TV}'_{r,s}$ in place of $\mathrm{TV}_{r,s}$.
\item If $r$ is odd,
$$\mathrm{TV}_{r,s}(M)=
\begin{cases}
\mathrm{TV}_{3,2}(M)\cdot\mathrm{TV}'_{r,s}(M)&s\mbox{ even}\\
\mathrm{TV}_{3,1}(M)\cdot\mathrm{TV}'_{r,r-s}(M)&s\mbox{ odd}
\end{cases}
$$
\item
Denote by $\beta_i$ the dimension of $H_i(M;\Integral/2\Integral)$
over $\Integral/2\Integral$, and by $w_1\in H^1(M;\Integral/2\Integral)$ 
the first Stifel--Whitney class of $M$.
$$\mathrm{TV}_{3,2}(M)=2^{-\beta_0(M)+\beta_2(M)},$$
and
$$\mathrm{TV}_{3,1}(M)=2^{-\beta_0(M)}\cdot\sum_{t}(-1)^{\langle t^3+w_1^2t,[M]\rangle}$$
where the index $t$ ranges over $H^1(M;\Integral/2\Integral)$.
\end{enumerate}
\end{lemma}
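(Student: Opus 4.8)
The plan is to treat the four parts in turn, the first three being essentially formal and the last requiring a genuine computation at $r=3$.

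For part (1) I would argue directly from the specialization in Notation~\ref{tv_customary}. Replacing $s$ by $s+2r$ leaves $q^{1/2}=e^{\sqrt{-1}\pi s/r}$ unchanged, hence leaves every factor in (\ref{tv_def}) and (\ref{tv_refined_def}) unchanged, while replacing $s$ by $-s$ replaces $q^{1/2}$ by $q^{-1/2}$; since $[n]$ and $(q^{1/2}-q^{-1/2})^2$ are invariant under $q^{1/2}\mapsto q^{-1/2}$, so is the whole state sum. (Equivalently, $s\mapsto-s$ is complex conjugation and the values are real by Lemma~\ref{tv_property}.) The $\mathrm{TV}'$ identities follow verbatim, noting $s+2r$ and $-s$ share the parity of $s$. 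For part (2) I would use that (\ref{tv_def}) exhibits $\mathrm{TV}(M;q^{1/2})$ as a fixed $\Rational$-rational expression in $q^{1/2}$, so any field automorphism $\sigma$ fixing $\Rational$ satisfies $\sigma(\mathrm{TV}(M;q^{1/2}))=\mathrm{TV}(M;\sigma(q^{1/2}))$. When $s$ is odd and coprime to $r$ one checks $\gcd(s,2r)=1$, so $q^{1/2}$ is a primitive $2r$-th root of unity and the automorphism $e^{\sqrt{-1}\pi/r}\mapsto e^{\sqrt{-1}\pi s/r}$ carries $\mathrm{TV}_{r,1}(M)$ to $\mathrm{TV}_{r,s}(M)$; when $s$ is even (so $r$ is odd) $q^{1/2}$ is a primitive $r$-th root of unity and comparison with $s=r-1$ gives conjugacy to $\mathrm{TV}_{r,r-1}(M)$. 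The $\mathrm{TV}'$ case is identical via (\ref{tv_refined_def}). Part (3) is then available from the literature: the case $s$ even is \cite[Theorem 2.9]{DKY}, and the case $s$ odd is the splitting formula proved in Section~\ref{Sec-splitting}.

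For part (4) I would compute (\ref{tv_def}) at $r=3$ directly. Here $I_3=\{0,1\}$ and a triple is admissible precisely when it contains an even number of $1$'s, so $\mathcal{A}_3=Z^1(\mathscr{T};\Integral/2\Integral)$, the simplicial $1$-cocycles with $\Integral/2\Integral$ coefficients, the coboundary being the mod-$2$ sum of the three edge colors on each face. The prefactor equals $2^{-|V|}$ for both $s=1,2$, since $(q^{1/2}-q^{-1/2})^2=-3$ and $-2r=-6$. For $s=2$ one has $[2]=-1$, and a short evaluation of Notation~\ref{color_terms} gives $|e|_c=|f|_c=1$ for every admissible color and, after checking the eight cocycle colorings of a single tetrahedron, $|t|_c=1$; hence $\mathrm{TV}_{3,2}(M)=2^{-|V|}\,|Z^1(\mathscr{T};\Integral/2\Integral)|$. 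Writing $|Z^1|=|B^1|\cdot|H^1|=(|C^0|/|H^0|)\,|H^1|=2^{|V|-\beta_0+\beta_1}$ and invoking mod-$2$ Poincar\'e duality $\beta_1=\beta_2$ yields $\mathrm{TV}_{3,2}(M)=2^{-\beta_0+\beta_2}$.

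The substantial case is $s=1$, where $[2]=1$ gives $|e|_c=(-1)^{c(e)}$ and $|f|_c=\pm1$ according to whether the face carries two $1$-edges, while a direct evaluation shows $|t|_c\in\{\pm1\}$. Thus $|\mathscr{T}|_c$ is a sign, and I would show (i) that it depends only on the class $[c]\in H^1(M;\Integral/2\Integral)$, and (ii) that it equals $(-1)^{\langle [c]^3+w_1^2[c],[M]\rangle}$. Granting these, summing over the $|B^1|=2^{|V|-\beta_0}$ cocycles in each class converts $2^{-|V|}\sum_{c\in Z^1}|\mathscr{T}|_c$ into $2^{-\beta_0}\sum_{t\in H^1}(-1)^{\langle t^3+w_1^2 t,[M]\rangle}$, as claimed. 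The main obstacle is exactly (i)--(ii): one must recognize the product of the local signs $|e|_c,|f|_c,|t|_c$ over a triangulation as the evaluation on $[M]$ of a simplicial $3$-cocycle representing $c\cup c\cup c+w_1\cup w_1\cup c$. I expect the cubic term to emerge from the Alexander--Whitney formula for the simplicial cup product applied to the tetrahedral weights, and the correction $w_1^2[c]$ from the orientation-dependent signs in the $6j$-symbols via the simplicial Wu formula; independence of the representative then follows from triangulation invariance of $\mathrm{TV}$ together with the cocycle condition. For orientable $M$ the correction vanishes and the sign reduces to $(-1)^{\langle [c]^3,[M]\rangle}$.
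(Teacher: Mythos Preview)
Your treatment of parts (1)--(3) coincides with the paper's: the paper likewise dismisses (1) and (2) as immediate from the defining state-sum expressions, and for (3) cites \cite[Theorem~2.9]{DKY} for $s$ even and defers the $s$ odd case to the appendix Section~\ref{Sec-splitting}.

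For part (4) the paper gives no argument at all; it simply attributes both formulas to Turaev and Viro \cite[Section~9.3.A]{TV_invariant}. Your direct computation is thus a genuinely different route. The $s=2$ case is carried out correctly and completely: the identification $\mathcal{A}_3=Z^1(\mathscr{T};\Integral/2\Integral)$, the verification $|e|_c=|f|_c=|t|_c=1$, and the cocycle count are all correct. For $s=1$ your preliminary sign analysis is right, and the mechanism you anticipate---recognizing the product of local signs as the evaluation on $[M]$ of a simplicial cocycle representing $c\smile c\smile c+w_1\smile w_1\smile c$---is indeed the content of Turaev--Viro's original argument. But you have not carried it out, and one remark is off: your proposed shortcut for (i) via ``triangulation invariance of $\mathrm{TV}$'' does not work as stated, since triangulation invariance controls only the total sum $\sum_c|\mathscr{T}|_c$, not individual summands $|\mathscr{T}|_c$. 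The honest path is to establish (ii) directly by a combinatorial verification on a single tetrahedron (after which (i) is automatic since the right-hand side depends only on $[c]$); this is exactly what the cited reference supplies and is not a formality.
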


%\begin{proof}
The first and the second statements are again obvious properties
of the defining expressions.

The third statement
is proved when $s$ is even 
by Detcherry, Kalfagianni, and Yang \cite[Theorem 2.9]{DKY},
and can be easily derived from well-known facts when $s$ is odd,
assuming $M$ orientable, (see Lemmas \ref{rt_r_odd} and \ref{tv_and_rt}).
We supply a proof without assuming orientability
in the appendix Section \ref{Sec-splitting}.

The formulas for $\mathrm{TV}_{3,s}$ 
in the fourth statement are due to Turaev and Viro 
\cite[Section 9.3.A]{TV_invariant}.

\subsection{Relation to Witten--Reshetikhin--Turaev invariants}
For connected oriented 
closed $3$--manifolds,
the Witten--Reshetikhin--Turaev invariants 
are invariant under orientation-preserving homeomorphisms.
These invariants were suggested by Witten \cite{Witten_qft},
and the first mathematically rigorous construction
was due to Reshetikhin and Turaev \cite{RT_invariant}.

Following Kirby and Melvin \cite{KM_invariant},
we denote by $\tau_r$ 
the $\mathrm{SU}(2)$ Witten--Reshetikhin--Turaev invariants,
defined for any integer $r\geq3$, and 
by $\tau'_r$ 
the $\mathrm{SO}(3)$ Witten--Reshetikhin--Turaev invariants, 
defined for any odd integer $r\geq3$.
Note that $\tau_r$ is slight modification of the original construction by
Reshetikhin and Turaev \cite{RT_invariant},
differing by a factor of absolute value $1$ (depending on the first Betti number),
and $\tau'_r$ is introduced by Kirby and Melvin \cite[Section 8]{KM_invariant}.
The invariant $\tau_r$ corresponds to
the $4r$--th primitive complex root of unity
$q^{1/4}=e^{\sqrt{-1}\cdot2\pi/4r}$ by construction,
whereas $\tau'_r$ 
is obtained by modifying the defining expression of $\tau_r$
when $r$ is odd.
Upon suitable interpretation, 
$\tau'_r$ appears to correspond to
the $2r$--th primitive complex root of unity $q^{1/4}=e^{\sqrt{-1}\cdot\pi(r-1)/4r}$,
(compare \cite[Theorem B]{BHMV_invariant}).
Both $\tau_r$ and $\tau'_r$ take values in $\Complex$.

The following properties characterize the normalization of $\tau_r$ and $\tau'_r$.

\begin{lemma}\label{rt_property}
Let $M,N$ be any connected oriented closed $3$--manifolds.
\begin{enumerate}
\item $\tau_r(M)=\overline{\tau_r(-M)}$.
\item
$\tau_r(M\#N)=\tau_r(M)\cdot\tau_r(N)$.
\item 
$\tau_r(S^2\times S^1)=\sqrt{r/2}\,/\sin(\pi/r)$.
Hence, $\tau_r(S^3)=1$.
\end{enumerate}
If $r$ is odd, the same statements hold with $\tau'_r$ in place of $\tau_r$,
and $\sqrt{r/4}$ in place with $\sqrt{r/2}$.
\end{lemma}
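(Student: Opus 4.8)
The three statements are standard normalization properties of the surgery-theoretic invariants $\tau_r,\tau'_r$, and the plan is to recall the surgery formula in the Reshetikhin--Turaev/Kirby--Melvin convention \cite{RT_invariant,KM_invariant} and read each property off it, fixing the normalization by the requirement $\tau_r(S^3)=1$. I would present a connected oriented closed $M$ by integral surgery on a framed link $L\subset S^3$ with $\ell$ components and linking matrix of signature $\sigma$, and write $\tau_r(M)=\mathcal{D}^{-\ell}\,\alpha^{-\sigma}\,\langle L\rangle_\omega$. Here $\langle L\rangle_\omega$ is the Kauffman-bracket evaluation of $L$ with each component cabled by the Kirby coloring $\omega=\sum_{i=0}^{r-2}(-1)^i[i+1]\,e_i$; the variable is $A=q^{1/4}=e^{\sqrt{-1}\cdot2\pi/4r}$, so that $[n]=\sin(\pi n/r)/\sin(\pi/r)$; the total quantum dimension is $\mathcal{D}=\big(\sum_{i=0}^{r-2}[i+1]^2\big)^{1/2}$; and $\alpha=\langle U_{+1}\rangle_\omega/\mathcal{D}$ is the framing anomaly, a constant of modulus $1$ depending only on $r$. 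The empty link presents $S^3$ and yields $\tau_r(S^3)=\mathcal{D}^{0}\alpha^{0}\cdot1=1$.

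Properties (1) and (2) would then follow formally. For (1), the manifold $-M$ is presented by the mirror image $\overline{L}$ with all framings negated, so its linking matrix has signature $-\sigma$, while $\langle\overline{L}\rangle_\omega=\overline{\langle L\rangle_\omega}$ because $\langle L\rangle_\omega$ is a Laurent polynomial in $A$ with rational coefficients and mirroring substitutes $A\mapsto A^{-1}=\overline{A}$. Thus $\tau_r(-M)=\mathcal{D}^{-\ell}\alpha^{\sigma}\overline{\langle L\rangle_\omega}$, which equals $\overline{\tau_r(M)}$ since $\mathcal{D}$ is real and $|\alpha|=1$ gives $\alpha^{\sigma}=\overline{\alpha^{-\sigma}}$. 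For (2), the connected sum $M\#N$ is presented by placing the surgery links $L_M$ and $L_N$ in disjoint balls of $S^3$, i.e.\ by the split union $L_M\sqcup L_N$; the $\omega$-colored bracket is multiplicative under split union, and the quantities $\ell$ and $\sigma$ are additive, whence $\tau_r(M\#N)=\tau_r(M)\cdot\tau_r(N)$.

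For (3), I would realize $S^2\times S^1$ as $0$-framed surgery on the unknot $U_0$: then $\ell=1$, the linking matrix $(0)$ has $\sigma=0$, and the $\omega$-colored $0$-framed unknot evaluates to $\langle U_0\rangle_\omega=\sum_{i=0}^{r-2}\big((-1)^i[i+1]\big)^2=\mathcal{D}^2$, so that $\tau_r(S^2\times S^1)=\mathcal{D}^{-1}\mathcal{D}^2=\mathcal{D}$. The only genuine calculation is then the closed form of $\mathcal{D}$: using $[n]=\sin(\pi n/r)/\sin(\pi/r)$ together with the identity $\sum_{n=1}^{r-1}\cos(2\pi n/r)=-1$ (the real part of the vanishing sum of the nontrivial $r$-th roots of unity), one gets $\sum_{n=1}^{r-1}\sin^2(\pi n/r)=r/2$, hence $\mathcal{D}=\sqrt{r/2}/\sin(\pi/r)$, as asserted.

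For the $\mathrm{SO}(3)$ invariant $\tau'_r$ at odd $r$, the same surgery formula is used with the color sum restricted to the odd-dimensional labels (colors $i$ even, i.e.\ $n=i+1$ odd), compare \cite{BHMV_invariant}; properties (1) and (2) carry over word for word. For (3) the computation reduces to $\mathcal{D}'=\big(\sum_{n\ \mathrm{odd},\,1\le n\le r-2}[n]^2\big)^{1/2}$, and here the involution $n\mapsto r-n$ on $\{1,\dots,r-1\}$ fixes each $\sin^2(\pi n/r)$ while interchanging the odd and even $n$ (as $r$ is odd); it therefore splits the total $r/2$ into two equal halves, giving $\mathcal{D}'^{2}=r/(4\sin^2(\pi/r))$ and $\tau'_r(S^2\times S^1)=\sqrt{r/4}/\sin(\pi/r)$. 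I expect the main obstacle to be the purely bookkeeping task of pinning down the constants $\mathcal{D}$ and $\alpha$, the signature correction, and the precise root of unity in the Kirby--Melvin conventions so that all three properties hold simultaneously with $\tau_r(S^3)=1$ and with the stated $\tau'_r$ normalization; once the surgery formula is fixed in this way, statements (1)--(3) are formal consequences of the surgery calculus, the sole analytic input being the trigonometric identity $\sum_{n=1}^{r-1}\sin^2(\pi n/r)=r/2$.
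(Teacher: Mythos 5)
Your proposal is correct and is essentially the argument behind the paper's own treatment: the paper gives no proof of this lemma, citing instead \cite[Section 1 and (5.11)]{KM_invariant} and \cite[Proposition 6.10]{BHMV_invariant}, and your skein-theoretic derivation (surgery formula with the Kirby coloring, multiplicativity of the bracket under split union, conjugation under mirroring, and the sum $\sum_{n=1}^{r-1}\sin^2(\pi n/r)=r/2$, halved via $n\mapsto r-n$ for the $\mathrm{SO}(3)$ case) is precisely the standard proof contained in those references. The only ingredients you leave implicit --- invariance under Kirby moves, the Gauss-sum identity $|\langle U_{+1}\rangle_\omega|=\mathcal{D}$ needed both for well-definedness and for property (1), and modularity of the even-color subtheory at odd $r$ --- are exactly the standard facts established in the cited sources, so this is an acceptable level of detail.
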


See \cite[Section 1 and (5.11)]{KM_invariant} and \cite[Proposition 6.10]{BHMV_invariant}.

\begin{lemma}\label{rt_r_odd}
Let $M$ be any connected oriented closed $3$--manifold.
If $r$ is odd,
$$\tau_r(M)=
\begin{cases}
\tau_3(M)\cdot\tau'_r(M)&r\equiv3\bmod4\\
\overline{\tau_3(M)}\cdot\tau'_r(M)&r\equiv1\bmod4
\end{cases}
$$
\end{lemma}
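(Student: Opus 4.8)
The plan is to compute both invariants from a single surgery presentation and then to factor the underlying modular data. First I would present $M$ as the result of Dehn surgery on a framed link $L=L_1\cup\cdots\cup L_m$ in $S^3$ with linking matrix $B$, writing $\sigma_\pm$ for the numbers of positive and negative eigenvalues of $B$. Recall that $\tau_r(M)$ and $\tau'_r(M)$ are both Reshetikhin--Turaev state sums over this presentation: one colors the components of $L$ by simple objects of the $\mathrm{SU}(2)$ modular category at level $r-2$, weights each coloring by the corresponding colored-link bracket in $S^3$ together with the twist factors read off the framings, sums over all colors in $I_r$ for $\tau_r$ and over the even colors in $I'_r$ for $\tau'_r$, and finally divides by the normalization constants fixed by the $S^2\times S^1$ values in Lemma \ref{rt_property}. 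Thus both sides of the claimed identity are functions of the same colored-link data, and the problem reduces to a factorization statement for that data when $r$ is odd.

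Second, I would establish the factorization of the modular data. When $r$ is odd the simple object $V_{r-2}$ of top color is invertible, and tensoring with it realizes the involution $a\mapsto r-2-a$ on the label set $I_r$; since $r-2$ is odd this involution interchanges even and odd colors. Writing $\bar a\in I'_r$ for the even member of the pair $\{a,r-2-a\}$, the assignment $a\mapsto(a\bmod 2,\ \bar a)$ is a bijection $I_r\to\{0,1\}\times I'_r$ (note $|I_r|=r-1=2\cdot|I'_r|$). Under this bijection the $S$-matrix of $\mathrm{SU}(2)_{r-2}$ factors as a product of the $2\times 2$ semion $S$-matrix (the $\mathrm{SU}(2)_3$ datum on the $\{0,1\}$ factor) and the $\mathrm{SO}(3)_r$ $S$-matrix on the $I'_r$ factor; this is a direct trigonometric identity for $\sin((a+1)(b+1)\pi/r)$. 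The parallel statement for the twist ($T$-matrix) is what makes the colored-link brackets themselves multiply across the two factors, so that the entire state sum splits as a product.

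Third, I would assemble the two pieces. The split state sum, together with the matching split of the normalization constants from Lemma \ref{rt_property}, yields $\tau_r(M)=(\text{semion invariant of }M)\cdot\tau'_r(M)$, where the first factor is the invariant built from the invertible-object $\{0,1\}$ factor alone. This factor is the level-one ($r=3$) $\mathrm{SU}(2)$ invariant, but its chirality is dictated by the twist of $V_{r-2}$, namely $\theta_{r-2}=(-1)^{r-2}A^{(r-2)r}=(-1)^{r-2}e^{\sqrt{-1}\,\pi(r-2)/2}$ with $A=e^{\sqrt{-1}\,\pi/2r}$, which equals $-\sqrt{-1}$ when $r\equiv3\bmod4$ and $+\sqrt{-1}$ when $r\equiv1\bmod4$. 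Matching the former chirality (which agrees with the standard $\tau_3$ twist $\theta_1=-\sqrt{-1}$) to $\tau_3$ and, by Lemma \ref{rt_property}(1) (orientation reversal corresponds to complex conjugation), the opposite chirality to $\overline{\tau_3}$, gives exactly the claimed dichotomy.

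The hard part will be the phase bookkeeping rather than any single computation. The $S$-matrix factorization is elementary, but the framing anomaly --- encoded in the twist $T$-matrix and in the signature-dependent Gauss-sum normalization that converts the colored bracket into an honest invariant --- carries a root-of-unity phase, and it is precisely the value of this phase that decides whether the invertible factor is $\tau_3$ or $\overline{\tau_3}$. I would therefore spend the most care tracking $\theta_{r-2}$ and the $\sigma_\pm$-dependent normalization through the surgery formula, which is where the $r\bmod4$ case distinction is pinned down. This factorization is in substance the one due to Kirby and Melvin; the remaining task is only to match their conventions to the $\tau_r,\tau'_r$ normalized by Lemma \ref{rt_property}.
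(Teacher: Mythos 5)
Your proposal is correct, and it is in substance the same argument the paper relies on: the paper's proof of this lemma is simply the citation to \cite[Corollary 8.9 and Theorem 8.10]{KM_invariant}, and what you sketch --- the surgery-presentation state sum, the splitting $I_r\cong\{0,1\}\times I'_r$ induced by the invertible top color $r-2$, the factorization of the $S$- and $T$-data, and the twist $\theta_{r-2}=\mp\sqrt{-1}$ pinning down the $r\bmod 4$ dichotomy --- is exactly the content of Kirby--Melvin's proof, as you yourself note. Since you also correctly anchor the normalization to Lemma \ref{rt_property}, nothing essential is missing; the remaining work is only the convention-matching you already flag.
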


See \cite[Corollary 8.9 and Theorem 8.10]{KM_invariant}.
See also \cite[Theorem 6.11]{KM_invariant} for characterization of $\tau_3(M)$
in terms of classical topological invariants.

The Turaev--Viro invariants $\mathrm{TV}_r$ and $\mathrm{TV}'_r$
are essentially the absolute value squares
of Witten--Reshetikhin--Turaev invariants,
or precisely as follows.

\begin{lemma}\label{tv_and_rt}
Let $M$ be a connected closed $3$--manifold.
Let $r\geq3$ be any integer.
\begin{enumerate}
\item
If $M$ is oriented,
$$\frac{\mathrm{TV}_{r,1}(M)}{\mathrm{TV}_{r,1}(S^3)}=|\tau_r(M)|^2,$$
and if $r$ is odd,
$$\frac{\mathrm{TV}'_{r,r-1}(M)}{\mathrm{TV}'_{r,r-1}(S^3)}=|\tau'_r(M)|^2.$$	
\item
If $M$ is non-orientable, 
$$\frac{\mathrm{TV}_{r,1}(M)}{\mathrm{TV}_{r,1}(S^3)}=\tau_r(W),$$
and if $r$ is odd,
$$\frac{\mathrm{TV}'_{r,r-1}(M)}{\mathrm{TV}'_{r,r-1}(S^3)}=\tau'_r(W),$$
where $W$ denotes the orientable connected double cover of $M$.
Note that $W$ can be canonically constructed,
with a canonical orientation 
and an orientation-reserving deck transformation.
\item
$$\mathrm{TV}_{r,1}(S^3)=\frac{2}{r}\cdot\sin^2(\pi/r),$$
and if $r$ is odd,
$$\mathrm{TV}'_{r,r-1}(S^3)=\frac{4}{r}\cdot\sin^2(\pi/r).$$
\end{enumerate}
\end{lemma}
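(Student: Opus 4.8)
The plan is to prove the three assertions in the order (3), (1), (2): the value in (3) fixes the normalizing constant that is implicit in the Turaev--Walker type identities of (1) and (2), so it is convenient to have it first.

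For (3) I would use that, with the normalization in (\ref{tv_def}) and (\ref{tv_refined_def}), the Turaev--Viro invariant of $S^3$ equals the inverse global dimension $\mathcal{D}^{-2}$, where $\mathcal{D}^2=\sum_i d_i^2$ and $d_i=[i+1]$ is the quantum dimension of the color $i$; this is a standard feature of the state sum and does not rely on the Reshetikhin--Turaev theory, so no circularity arises. Since $[n]=\sin(\pi sn/r)/\sin(\pi s/r)$, the $\mathrm{SU}(2)$ case ($s=1$, colors $i\in\{0,\dots,r-2\}$) gives $\mathcal{D}^2=\sin^{-2}(\pi/r)\sum_{n=1}^{r-1}\sin^2(\pi n/r)=r/(2\sin^2(\pi/r))$ by the elementary identity $\sum_{n=1}^{r-1}\sin^2(\pi n/r)=r/2$, hence $\mathrm{TV}_{r,1}(S^3)=2\sin^2(\pi/r)/r$. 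For the $\mathrm{SO}(3)$ case ($s=r-1$, colors even, so $i+1$ odd in $\{1,\dots,r-2\}$) the equality $\sin(\pi(r-1)/r)=\sin(\pi/r)$ yields $[n]^2=\sin^2(\pi n/r)/\sin^2(\pi/r)$ again, and the involution $n\mapsto r-n$, which interchanges odd and even residues because $r$ is odd, shows the odd-$n$ partial sum is $r/4$; this gives $\mathrm{TV}'_{r,r-1}(S^3)=4\sin^2(\pi/r)/r$.

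For (1) I would invoke the Turaev--Walker theorem: for a connected oriented closed $3$--manifold the Turaev--Viro invariant is the squared modulus of the Reshetikhin--Turaev invariant built from the same root of unity. The invariant $\mathrm{TV}_{r,1}$ uses $q=e^{2\pi\sqrt{-1}/r}$, which is exactly the root of unity underlying $\tau_r$, so the theorem reads $\mathrm{TV}_{r,1}(M)=C\cdot|\tau_r(M)|^2$ for some $C=C(r)$; setting $M=S^3$ and using $\tau_r(S^3)=1$ from Lemma \ref{rt_property} forces $C=\mathrm{TV}_{r,1}(S^3)$, which is the stated identity after dividing. The $\mathrm{SO}(3)$ claim is the corresponding refinement for $\tau'_r$, and the role of the choice $s=r-1$ is precisely to select the root of unity to which $\tau'_r$ is attached.

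For (2) I would pass to the orientation double cover $W\to M$, which carries a canonical orientation and a free orientation-reversing involution. Since the Turaev--Viro state sum ignores orientation, the clean statement unifying (1) and (2) is that $\mathrm{TV}(M)$ is always the Reshetikhin--Turaev invariant of the orientation double cover: for orientable $M$ this cover is $M\sqcup(-M)$ and reproduces $|\tau_r(M)|^2$ via $\tau_r(-M)=\overline{\tau_r(M)}$, while for non-orientable $M$ it is the connected manifold $W$ and yields $\tau_r(W)$. I would make this precise using the non-orientable version of the Turaev--Walker theorem, again normalizing at $S^3$. I expect the main obstacle to lie exactly here, in the non-orientable comparison together with the careful bookkeeping of the $q^{1/4}$, $q^{1/2}$, and $q$ conventions relating the Reshetikhin--Turaev and Turaev--Viro constructions, most delicate in the $\mathrm{SO}(3)$ setting; by contrast the trigonometric computation of (3) is routine.
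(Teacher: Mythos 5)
Your parts (1) and (3) are essentially fine: part (1) is the same argument as the paper's (quote the Turaev--Walker-type theorem for oriented manifolds, as in \cite[Theorem 4.1.1]{Turaev_book_quantum} and \cite{Roberts_tv}, calibrate at $S^3$ via $\tau_r(S^3)=1$, and pass to the primed invariants through the splitting Lemmas \ref{rt_r_odd} and \ref{tv_r_odd}), while part (3) takes a genuinely different and more self-contained route --- you evaluate the state sum of $S^3$ directly through the global dimension, whereas the paper obtains (3) from (1) by plugging in $M=S^2\times S^1$ and using Lemmas \ref{tv_property} and \ref{rt_property}.

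The genuine gap is in part (2), exactly where you predicted trouble but did not resolve it: \emph{you cannot calibrate the non-orientable comparison at $S^3$}, because $S^3$ is orientable and so only tests the orientable case of your ``unified'' statement. The constant relating $\mathrm{TV}_{r,1}(M)$ to $\tau_r(W)$ for non-orientable $M$ is not the same constant as in part (1), and no orientable test manifold can determine it. The reason is the disjoint-union bookkeeping hidden in your formulation: a TQFT partition function $Z$ satisfies $Z(M_1\sqcup M_2)=Z(M_1)Z(M_2)$ and $Z(S^3)=\mathcal{D}^{-1}$ with $\mathcal{D}^2=r/(2\sin^2(\pi/r))$ and $\tau_r=\mathcal{D}\cdot Z$ on connected manifolds; the clean unified statement $\mathrm{TV}(M)=Z(\widetilde{M})$ for the orientation double cover $\widetilde{M}$ therefore converts to a factor $\mathcal{D}^{-2}$ when $\widetilde{M}=M\sqcup(-M)$ has two components, but only $\mathcal{D}^{-1}$ when $\widetilde{M}=W$ is connected. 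Equivalently, your claim that the orientable case ``reproduces $|\tau_r(M)|^2$'' silently uses $\tau_r(M\sqcup(-M))=\tau_r(M)\,\tau_r(-M)$, which fails in the Kirby--Melvin normalization; in any convention where it holds, the two cases of your unified statement no longer share one constant, so fixing it on $S^3$ proves nothing about non-orientable $M$.

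That this is a real danger, not pedantry, can be seen on $M=\mathbb{RP}^2\times S^1$, whose orientation double cover is $W=S^2\times S^1$. The paper's own Lemma \ref{tv_r_odd}(4) gives $\mathrm{TV}_{3,1}(\mathbb{RP}^2\times S^1)=1$: writing $H^1(M;\Integral/2\Integral)=\langle a,b\rangle$ with $a^3=b^2=0$ and $w_1=a$, the exponent $\langle t^3+w_1^2t,[M]\rangle$ at $t=\alpha a+\beta b$ equals $\beta(\alpha+1)$, so the sum is $2$ and $\mathrm{TV}_{3,1}=2^{-1}\cdot2=1$. An $S^3$-calibrated constant would instead give $\mathrm{TV}_{3,1}(S^3)\cdot\tau_3(S^2\times S^1)=\tfrac12\cdot\sqrt{2}\neq1$; the mismatch is exactly one factor of $\mathcal{D}=\sqrt{r/2}/\sin(\pi/r)=\tau_r(S^2\times S^1)$ (the analogous check at $r=3$ is blind for the $\mathrm{SO}(3)$ version, since there $\sqrt{r/4}/\sin(\pi/r)=1$). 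So to complete part (2) you must either quote the non-orientable theorem \cite[Theorem 4.4.1]{Turaev_book_quantum} with its normalization made fully explicit and convert carefully to the Kirby--Melvin $\tau_r$ --- which is all the paper itself does --- or calibrate the non-orientable constant independently, e.g.\ against $\mathbb{RP}^2\times S^1$ as above, or via the trace property of the (unoriented) state-sum TQFT, which forces $\mathrm{TV}_{r,1}(\mathbb{RP}^2\times S^1)=\dim\mathcal{Z}^{\mathrm{TV}}_{r,1}(\mathbb{RP}^2)$ to be a non-negative integer. Note also that this cross-check conflicts with part (2) as printed, which is precisely the kind of discrepancy your $S^3$-calibration scheme is structurally unable to detect or repair.
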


See \cite[Theorems 4.1.1 and 4.4.1]{Turaev_book_quantum}
for systematic proofs of the formulas regarding $\mathrm{TV}_r$;
see also \cite{Roberts_tv} for an elegant proof
for the oriented case regarding $\mathrm{TV}_r$.
The formulas regarding $\mathrm{TV}'_r$ 
can be easily derived from the $\mathrm{TV}_r$ formulas
using Lemmas \ref{rt_r_odd} and \ref{tv_r_odd}.
The values for $\mathrm{TV}_{r,1}(S^3)$ and $\mathrm{TV}'_{r,r-1}(S^3)$
can be obtained immediately by taking $M=S^2\times S^1$
and applying Lemmas \ref{tv_property} and \ref{rt_r_odd}.

\subsection{Perspective of TQFT}
The Turaev--Viro invariants 
fit into the general framework of
$(2+1)$--dimensional topological quantum field theories (TQFT) \cite{Atiyah_tqft}.
We describe below focusing on the $\mathrm{SU}(2)$ Turaev--Viro invariants.
The discussion regarding $\mathrm{SO}(3)$ Turaev--Viro invariants is completely similar.

Turaev and Viro construct 
a functor $\mathcal{Z}^{\mathrm{TV}}$ from the $(2+1)$--dimensional cobordism category
to the category of Hermitian vector spaces (and linear homomorphisms) 
over the abstract cyclotomic field $\mathbb{K}=\Rational(q^{1/2})$,
with respect to the involution 
$*$ on $\mathbb{K}$
as provided by the Galois transformation $q^{1/2}\mapsto q^{-1/2}$.
To any oriented closed surface $S$,
there is a finite-dimensional vector space $\mathcal{Z}^{\mathrm{TV}}(S)$,
equipped with a nondegenerate Hermitian pairing
$$\langle\_,\_\rangle\colon \mathcal{Z}^{\mathrm{TV}}(S)\times \mathcal{Z}^{\mathrm{TV}}(S)\to \mathbb{K}.$$
(Being \emph{Hermitian} means $\mathbb{K}$--sesquilinear and $*$--symmetric.)
To any cobordism $M$ from $S_0$ to $S_1$
(that is, an oriented compact $3$-manifold $M$ with bipartite boundary
$\partial M=\partial_- M\sqcup \partial_+ M=(-S_0)\sqcup S_1$,
up to boundary-fixing homomeorphisms),
there is a linear homomorphism 
$$\mathcal{Z}^{\mathrm{TV}}(M)\colon \mathcal{Z}^{\mathrm{TV}}(S_0)\to \mathcal{Z}^{\mathrm{TV}}(S_1).$$
The assignment $\mathcal{Z}^{\mathrm{TV}}$ is functorial,
and satisfies Atiyah's Hermitian TQFT axioms \cite[Section 2]{Atiyah_tqft}:
$\mathcal{Z}^{\mathrm{TV}}(-S)=\mathcal{Z}^{\mathrm{TV}}(S)^*$;
$\mathcal{Z}^{\mathrm{TV}}(-M)=\mathcal{Z}^{\mathrm{TV}}(M)^*$;
$\mathcal{Z}^{\mathrm{TV}}(S'\sqcup S'')=\mathcal{Z}^{\mathrm{TV}}(S')\otimes_{\mathbb{K}}\mathcal{Z}^{\mathrm{TV}}(S'')$;
and $\mathcal{Z}^{\mathrm{TV}}(\emptyset)=\mathbb{K}$,
($\emptyset$ denoting the empty surface, having a unique orientation by convention).

Turaev and Viro show that $\mathrm{TV}$ comes from a functor $\mathcal{Z}^{\mathrm{TV}}$ as above,
in the sense that the identity
$$\mathrm{TV}\left(M;q^{1/2}\right)=\mathcal{Z}^{\mathrm{TV}}(M)$$
holds for any closed $3$-manifold $M$.
Here, $M$ is treated as a cobordism between empty surfaces,
and $\mathcal{Z}^{\mathrm{TV}}(\emptyset,\emptyset)=\mathrm{End}_{\mathbb{K}}(\mathbb{K})$
is identified as $\mathbb{K}$. See \cite[Section 2.3]{TV_invariant}.

Being a TQFT functor, $\mathcal{Z}^{\mathrm{TV}}$ naturally induces
a $\mathbb{K}$--linear representation 
$$\mathrm{Mod}(S)\to\mathrm{GL}\left(\mathcal{Z}^{\mathrm{TV}}(S)\right)$$
of the mapping class group $\mathrm{Mod}(S)$
for any oriented closed surface $S$ \cite[Section 2.4]{TV_invariant}.
Specializing $q^{1/2}$ to complex roots of unity as in Notation \ref{tv_customary},
we obtain a complex linear representation, denoted as
\begin{equation}\label{rep_r_s}
\rho^{\mathrm{TV}}_{r,s}\colon \mathrm{Mod}(S)\to\mathrm{GL}\left(\mathcal{Z}^{\mathrm{TV}}_{r,s}(S)\right)
\end{equation}
for each integer $r\geq3$ and any integer $s$ coprime to $r$.
These representations preserve the specialized Hermitian pairings $\langle\_,\_\rangle_{r,s}$,
whose signatures depend on both $r$ and $s$, but are not necessarily (Hilbert) unitary.
We refer to these as the $\mathrm{SU}(2)$ \emph{Turaev--Viro TQFT representations} (at level $r-2$)
of $\mathrm{Mod}(S)$.

The following formula is useful implication of TQFT axioms \cite[Section 2]{Atiyah_tqft}.

\begin{lemma}\label{mapping_torus_tv}
Let $r\geq3$ be any integer and $s$ be any integer coprime to $r$.
For any oriented closed surface $\Sigma$,
and any mapping class $[f]\in\mathrm{Mod}(S)$,
$$\mathrm{TV}_{r,s}(M_f)=\mathrm{tr}_\Complex\left(\rho^{\mathrm{TV}}_{r,s}([f])\right),$$
where $M_f$ denotes the mapping torus of $f$.
\end{lemma}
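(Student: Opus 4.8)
The plan is to derive the identity from the general categorical-trace property of a TQFT functor, applied to the functor $\mathcal{Z}^{\mathrm{TV}}$ recalled above, and then to specialize the resulting identity over $\mathbb{K}=\Rational(q^{1/2})$ to $\Complex$ via Notation \ref{tv_customary}. Since the specialization $\mathbb{K}\to\Complex$ is a field homomorphism and every map in sight is $\mathbb{K}$--linear, the trace commutes with specialization, so it suffices to establish the abstract identity $\mathrm{TV}(M_f;q^{1/2})=\mathrm{tr}_{\mathbb{K}}(\mathcal{Z}^{\mathrm{TV}}(C_f))$. Here $C_f$ denotes the mapping cylinder of $f$, regarded as a cobordism from $S$ to $S$, whose induced homomorphism $\mathcal{Z}^{\mathrm{TV}}(C_f)\colon \mathcal{Z}^{\mathrm{TV}}(S)\to \mathcal{Z}^{\mathrm{TV}}(S)$ is, at the abstract level, the definition of $\rho^{\mathrm{TV}}([f])$ (see \cite[Section 2.4]{TV_invariant}).

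First I would observe that the mapping torus $M_f$ is exactly the closed $3$--manifold obtained from the cobordism $C_f\colon S\to S$ by gluing its outgoing copy of $S$ to its incoming copy. Thus, regarded as a cobordism from the empty surface to itself, $M_f$ is the \emph{closure} of $C_f$, and under the identification $\mathcal{Z}^{\mathrm{TV}}(\emptyset)=\mathbb{K}$ we have $\mathcal{Z}^{\mathrm{TV}}(M_f)=\mathrm{TV}(M_f;q^{1/2})$.

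Next I would realize this closure through the duality cobordisms. Writing $V=\mathcal{Z}^{\mathrm{TV}}(S)$ and using the axiom $\mathcal{Z}^{\mathrm{TV}}(-S)=V^*$, the product $S\times[0,1]$, bent and read as a cobordism $\emptyset\to S\sqcup(-S)$, induces a copairing $\mathrm{coev}\colon \mathbb{K}\to V\otimes_{\mathbb{K}} V^*$ sending $1$ to the element of $V\otimes_{\mathbb{K}} V^*=\mathrm{End}_{\mathbb{K}}(V)$ corresponding to $\mathrm{id}_V$; likewise $S\times[0,1]$, read as a cobordism $S\sqcup(-S)\to\emptyset$, induces the evaluation pairing $\mathrm{ev}\colon V\otimes_{\mathbb{K}} V^*\to\mathbb{K}$. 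These identifications follow from functoriality together with $\mathcal{Z}^{\mathrm{TV}}(S\times[0,1])=\mathrm{id}_V$ and the nondegeneracy of the Hermitian pairing $\langle\_,\_\rangle$. A direct inspection of the gluing then exhibits $M_f$ as the composite $\mathrm{ev}\circ(\mathcal{Z}^{\mathrm{TV}}(C_f)\otimes \mathrm{id}_{V^*})\circ\mathrm{coev}$, so the gluing axiom yields
$$\mathrm{TV}(M_f;q^{1/2})=\mathrm{ev}\circ\left(\rho^{\mathrm{TV}}([f])\otimes\mathrm{id}_{V^*}\right)\circ\mathrm{coev}=\mathrm{tr}_{\mathbb{K}}\left(\rho^{\mathrm{TV}}([f])\right),$$
and specializing to $q^{1/2}=e^{\sqrt{-1}\cdot\pi s/r}$ gives the asserted formula.

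The main obstacle is the third step: one must verify that the bent cylinders genuinely induce $\mathrm{ev}$ and $\mathrm{coev}$, and that the cobordism gluing producing $M_f$ really is the stated composite, matching orientations and boundary parametrizations so that the self-map appearing is precisely $\rho^{\mathrm{TV}}([f])$ rather than its inverse or adjoint. This is exactly where Atiyah's duality axioms $\mathcal{Z}^{\mathrm{TV}}(-S)=\mathcal{Z}^{\mathrm{TV}}(S)^*$ and $\mathcal{Z}^{\mathrm{TV}}(-M)=\mathcal{Z}^{\mathrm{TV}}(M)^*$ are used, and it is the only point requiring genuine care; once the bent cylinders are identified with the duality data, the trace formula is a formal consequence of functoriality.
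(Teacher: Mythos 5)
Your proof is correct and is precisely the route the paper intends: the paper offers no written proof, merely asserting the lemma as "a useful implication of TQFT axioms" with a citation to Atiyah, and your argument (mapping cylinder realizes $\rho^{\mathrm{TV}}([f])$, the mapping torus is its closure, duality/coevaluation plus the gluing axiom give the categorical trace, and the field homomorphism $\Rational(q^{1/2})\to\Complex$ commutes with trace) is the standard derivation of that implication. Your flagged caution about orientations and boundary parametrizations is also the right one, though it is harmless here since $M_f$ and $M_{f^{-1}}$ are homeomorphic and Turaev--Viro invariants are orientation-insensitive.
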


If $S$ is connected,
the complex dimension of the representation $\rho^{\mathrm{TV}}_{r,s}$
depends only on $r$ and genus $g$ of $S$.
This fact can be derived from Lemmas \ref{mapping_torus_tv} and \ref{tv_r_odd}
by considering the mapping torus of $f=\mathrm{id}_S$.
Verlinde type formulas for these dimensions 
can be derived from Lemma \ref{tv_and_rt} and known formulas 
about Witten--Reshetikhin--Turaev invariants \cite[Corollary 1.16]{BHMV_tqft}.
However, Witten--Reshetikhin--Turaev invariants 
only come from generalized TQFTs,
which require extra structures for resolving framing anomaly
\cite{BHMV_tqft}.
That is why they only naturally lead to 
projective linear representations of surface mapping class groups. 

\section{Calculation}\label{Sec-calculation}
This section is devoted to the proof of Theorem \ref{main_sfs}.

To restate our task,
we consider a closed Seifert fiber space $M$
with orientable orbifold base and orientable fibration,
and with symbol $(g;(a_1,b_1),\cdots,(a_n,b_n))$,
such that
$$a_1=a_2=\cdots=a_n=a,$$
and 
$$b_1+b_2+\cdots+b_n=0.$$
Moreover, we suppose $a\geq 3$ and $a>n\geq0$.
We compute $\mathrm{TV}_r$ and $\mathrm{TV}'_r$ of $M$
for $r=a$, and once they vanish, 
we show that 
they must also vanish for any $r$ divisible by $a$.

We invoke the following explicit formula for computing
the Witten--Reshetikhin--Turaev invariant $\tau_r$ of Seifert fiber spaces.
Recall $\tau_r(S^1\times S^2)=\sqrt{r/2}/\sin(\pi/r)$ (Lemma \ref{rt_property}).

\begin{lemma}\label{sfs_rt}
	Let $M$ be
	a closed Seifert fiber space 
	with orientable orbifold base and orientable fibration,
	and with symbol $(g;(a_1,b_1),\cdots,(a_n,b_n))$,
	where $g,n\geq0$ are integers, 
	and 
	where $a_j\geq0$ and $b_j$ are coprime pairs of integers for 
	$j=1,\cdots,n$.
	Orient $M$ by orienting the base and the fibers,
	such that the rational Euler number of the Seifert fibration is
	$$E=-\sum_j b_j/a_j.$$
	Then,
	$$\frac{\tau_{r}(M)}{\tau_r(S^2\times S^1)}
	=\frac{r^{g-1}\cdot U_r\cdot Z_r}{2^{n+g-1}\sqrt{\prod_j a_j}}$$
	where
	$$Z_r=\sum_{(\gamma,\boldsymbol{\mu},\boldsymbol{m})}\left\{
	\frac{e^{\sqrt{-1}\cdot\frac{\pi \gamma^2 E}{2r}}}{{\sin}^{n+2g-2}(\pi\gamma/r)}
	\cdot\prod_j
	\mu_j
	e^{\sqrt{-1}\cdot\left(\frac{-\pi(2rm_j+\mu_j)\gamma}{a_jr}+
	\frac{-2\pi (rm_j^2+\mu_jm_j)b^*_j}{a_j}\right)}
	\right\}
	$$
	and
	$$U_r=(-1)^n\cdot
	e^{\sqrt{-1}\cdot\left(\frac{3\pi}{2r}-\frac{3\pi}4\right)\cdot\mathrm{sgn}(E)}
	\cdot e^{\sqrt{-1}\cdot\frac{\pi\left(E+12\cdot\sum_j \mathrm{s}(b_j,a_j)\right)}{2r}}
	$$
	Here, any $j$ ranges over $\{1,\cdots,n\}$,
	and $(\gamma,\boldsymbol{\mu},\boldsymbol{m})=(\gamma,(m_1,\cdots,m_n),(\mu_1,\cdots,\mu_n))$
	ranges over $\{1,2,\cdots,r-1\}\times\{\pm1\}^n\times \Integral/a_1\Integral\times \cdots \times\Integral/a_n\Integral$.
	The notation $b_j^*$ denotes any congruence inverse of $b_j$ modulo $a_j$,
	namely, $b_jb_j^*\equiv1\bmod a_j$;
	$\mathrm{sgn}(E)$ denotes the sign of $E$, with value $\pm1$ or $0$;
	$\mathrm{s}(b_j,a_j)$
	denotes the Dedekind sum $(4a_j)^{-1}\cdot\sum_{l\,\in\{1,2,\cdots,a_j-1\}}\cot(\pi l/a_j)\cot(\pi l b_j/a_j)$.
\end{lemma}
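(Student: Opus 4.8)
The plan is to derive the formula by specializing the rational-surgery computation of Reshetikhin--Turaev invariants due to Hansen \cite{Hansen_sfs}, so the bulk of the work is to match Hansen's conventions with those fixed here, namely the Kirby--Melvin normalization recorded in Lemma \ref{rt_property} (so that the left-hand side is $\tau_r(M)/\tau_r(S^2\times S^1)$) and the orientation convention $E=-\sum_j b_j/a_j$. For completeness I would also indicate the underlying self-contained derivation, which I sketch in the next paragraphs.

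First I would fix a star-shaped surgery presentation of $M$ in $S^3$: a central unknot encoding the genus-$g$ data, with $n$ rationally framed meridional legs, the $j$-th leg realizing the surgery coefficient attached to the pair $(a_j,b_j)$. This is the standard presentation for a Seifert fiber space with orientable base and orientable fibration. I would then apply the $\mathrm{SU}(2)$ Reshetikhin--Turaev surgery formula at level $r-2$, writing $\tau_r(M)$ as a state sum over colorings of the surgery link by $\{0,1,\cdots,r-2\}$, weighted by quantum dimensions and by the colored link invariant, with an overall framing-anomaly correction governed by the signature of the linking matrix. The decisive feature is that the link is star-shaped: once the central component is colored, the $n$ legs decouple, and the state sum factorizes as a sum over the central color $\gamma\in\{1,\cdots,r-1\}$ of a product of single-leg contributions. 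The genus-$g$ base together with the $n$-valent central node produces the Verlinde-type factor $(S_{0\gamma})^{2-2g-n}\propto \sin^{-(n+2g-2)}(\pi\gamma/r)$, which accounts for the $\sin^{n+2g-2}(\pi\gamma/r)$ in the denominator of $Z_r$.

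It remains to evaluate each single-leg contribution. The $j$-th rational surgery acts on the colored central solid torus through the image of a matrix $\begin{pmatrix} a_j & * \\ b_j & * \end{pmatrix}\in\mathrm{SL}(2,\Integral)$ under the projective $\mathrm{SL}(2,\Integral)$--action given by the modular $S$ and $T$ matrices. Decomposing the matrix into generators and computing the relevant matrix element reduces the leg to a quadratic Gauss sum over $\Integral/a_j\Integral$; this is precisely the origin of the summation variables $\mu_j\in\{\pm1\}$ and $m_j\in\Integral/a_j\Integral$ and of the per-leg phase involving the congruence inverse $b_j^*$. Quadratic Gauss-sum reciprocity then converts each leg's framing defect into a Dedekind sum $\mathrm{s}(b_j,a_j)$ together with a signature term; summed over $j$ and combined with the global framing anomaly, these assemble into the $\gamma$-independent prefactor $U_r$, while the Gaussian phase $e^{\sqrt{-1}\cdot\pi\gamma^2E/2r}$ left inside $Z_r$ records the total rational Euler number $E$.

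The main difficulty is bookkeeping rather than conceptual: one must transport every sign, phase, power of $r$, and power of $\sin(\pi\gamma/r)$ consistently, and reconcile four normalizations at once, namely the orientation convention for $E$, the Kirby--Melvin normalization of $\tau_r$, the normalization of the $S$-matrix, and the framing-anomaly factor, so that the signature-dependent phase and the Dedekind-sum term in $U_r$ come out exactly as stated. I expect that in practice it is cleanest to quote Hansen's formula verbatim and carry out only this convention translation, rather than to redo the Gauss-sum reciprocity from scratch.
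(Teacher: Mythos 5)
Your proposal matches the paper's treatment: the paper gives no independent derivation but simply cites Hansen's rational surgery formula \cite[Theorem 8.4]{Hansen_sfs} and devotes Remark \ref{sfs_rt_remark} to exactly the convention translation you describe, namely that the Kirby--Melvin normalization differs from Hansen's by the factor $1/\tau_r(S^2\times S^1)=\mathcal{D}^{-1}$, checked against the evaluations for $S^3$ and $S^2\times S^1$. Your additional sketch of Hansen's internal argument (star-shaped surgery link, factorization over the central color, Gauss sums and reciprocity producing the Dedekind sums) is a faithful outline of the cited proof, but the operative step—quoting Hansen verbatim and reconciling normalizations—is precisely what the paper does.
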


See \cite[Theorem 8.4]{Hansen_sfs} for this formula;
see also Remark \ref{sfs_rt_remark} below for clarification about 
different notations and normalizations.
Hansen actually obtains the most general formula
that applies to any orientable closed Seifert fiber space,
including those with non-orientable orbifold base.
We have only stated here the case with orientable orbifold base.
An equivalent formula for this case
is formerly obtained by Rozansky \cite{Rozansky_sfs}.

\begin{remark}\label{sfs_rt_remark}
Our notation $\tau_r$ agrees with Kirby--Melvin \cite{KM_invariant}, 
differing from Hansen's notation by our factor $1/\tau_r(S^2\times S^1)$.
In \cite[Section 8]{Hansen_sfs}, Hansen's $\tau_r(M)$ 
is defined as $\tau_{(\mathcal{V}_t,\mathcal{D})}(M)$ therein;
as pointed out in \cite[Appendix A]{Hansen_sfs},
Kirby--Melvin's $\tau_r(M)$ is equal to 
$\tau_A(M)=\mathcal{D}\cdot\tau_{(\mathcal{V}_t,\mathcal{D})}(M)$,
where $\mathcal{D}$ is specified as $\sqrt{r/2}/\sin(\pi/r)$ 
in the equation (38) therein.
More directly,
one may check by evaluating the formula in \cite[Theorem 8.4]{Hansen_sfs} 
for $S^3$ (setting $g=0$, $b=1$, and $n=0$) and $S^2\times S^1$ 
(setting $g=0$, $b=0$ and $n=0$) in the simplest case $r=3$.
\end{remark}

\begin{lemma}\label{sfs_rt_simplified}
Under the assumptions of Theorem \ref{main_sfs},
and assuming $r$ divisible by $a$,
the term $Z_r=Z_r(M)$ as in Lemma \ref{sfs_rt} becomes
$$Z_r=
\sum_{(\gamma,\boldsymbol{\mu})}\left\{
\frac{e^{\sqrt{-1}\cdot\frac{-\pi\gamma\sum_j\mu_j}{ar}}\cdot\prod_j \mu_j }{{\sin}^{n+2g-2}(\pi\gamma/r)}
\cdot
\prod_j
\sum_{m_j}e^{\sqrt{-1}\cdot\frac{-2\pi(\gamma +b_j^*\mu_j)m_j}{a}}
\right\}
$$
where $(\gamma,\boldsymbol{\mu},\boldsymbol{m})$ ranges over
$\{1,\cdots,r-1\}\times\{\pm1\}^n\times (\Integral/a\Integral)^n$.
\end{lemma}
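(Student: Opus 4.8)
The plan is to substitute the hypotheses of Theorem \ref{main_sfs} directly into Hansen's expression for $Z_r$ from Lemma \ref{sfs_rt} and to keep track of which factors survive. The first thing I would record is that the rational Euler number vanishes: since every $a_j$ equals $a$ and $b_1+\cdots+b_n=0$, one has $E=-\sum_j b_j/a_j=-a^{-1}\sum_j b_j=0$. Consequently the global Gaussian factor $e^{\sqrt{-1}\cdot\pi\gamma^2 E/(2r)}$ in the summand equals $1$ and drops out.

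Next I would expand the $j$-th exponent after setting $a_j=a$. Writing it out,
$$\frac{-\pi(2rm_j+\mu_j)\gamma}{ar}+\frac{-2\pi(rm_j^2+\mu_jm_j)b^*_j}{a}
=\frac{-2\pi m_j\gamma}{a}+\frac{-\pi\mu_j\gamma}{ar}+\frac{-2\pi r m_j^2 b_j^*}{a}+\frac{-2\pi\mu_j m_j b_j^*}{a}.$$
Here the crucial point is that $a$ divides $r$, so $r/a$ is an integer and the quadratic term $\frac{-2\pi r m_j^2 b_j^*}{a}=-2\pi(r/a)m_j^2 b_j^*$ is an integer multiple of $2\pi$; its exponential is therefore $1$ and it disappears. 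This is the only step where the divisibility hypothesis $a\mid r$ is genuinely used, and together with the vanishing of $E$ it is the real content of the lemma; everything else is formal rearrangement, so I do not anticipate a serious obstacle beyond keeping the bookkeeping honest.

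The remaining two $m_j$-dependent terms combine into $\frac{-2\pi m_j(\gamma+\mu_j b_j^*)}{a}$, while the term $\frac{-\pi\mu_j\gamma}{ar}$ is independent of $m_j$ and can be pulled out of the inner sum. I would then collect these pulled-out factors across $j$, using $\prod_j e^{-\sqrt{-1}\cdot\pi\mu_j\gamma/(ar)}=e^{-\sqrt{-1}\cdot\pi\gamma\sum_j\mu_j/(ar)}$. Finally, since the summand is a product over $j$ whose $j$-th factor depends only on $m_j$ (and on the shared variables $\gamma$ and $\boldsymbol{\mu}$), the sum over $\boldsymbol{m}=(m_1,\ldots,m_n)\in(\Integral/a\Integral)^n$ factors as $\prod_j\sum_{m_j}$. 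Assembling these pieces, with $\gamma+\mu_j b_j^*=\gamma+b_j^*\mu_j$, reproduces the asserted formula exactly.
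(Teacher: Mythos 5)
Your proposal is correct and follows essentially the same route as the paper's proof: both use $E=0$ (forced by $a_j=a$ and $\sum_j b_j=0$) to kill the $\gamma^2$ factor, use $a\mid r$ to kill the quadratic term $rm_j^2$ in the exponent, and then pull out the $m_j$-independent phases and factor the sum over $\boldsymbol{m}\in(\Integral/a\Integral)^n$ into a product of inner sums. Your write-up is in fact slightly more explicit than the paper's (which states the two simplifications without expanding the exponent term by term), but the content is identical.
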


\begin{proof}
In the expression of $Z_r$ in Lemma \ref{sfs_rt},
if any $a_j$ divides $r$, 
we can ignore the term $rm_j^2$ in the exponent of the $j$-th factor in the product;
if $E=0$, we can ignore the factor that involves $\gamma^2$ on the exponent.
Therefore, under these conditions, the expression of $Z_r$ can be rearranged into
\begin{eqnarray*}
Z_r(M)&=&\sum_{(\gamma,\boldsymbol{\mu})}\left\{
\frac{\prod_j \mu_j e^{\sqrt{-1}\cdot\frac{-\pi\gamma\mu_j}{a_jr}}}{{\sin}^{n+2g-2}(\pi\gamma/r)}
\cdot\sum_{\boldsymbol{m}}
e^{\sqrt{-1}\cdot\sum_j\left(\frac{-2\pi\gamma m_j}{a_j}+\frac{-2\pi b^*_j\mu_j m_j}{a_j}\right)}
\right\}\\
&=&\sum_{(\gamma,\boldsymbol{\mu})}\left\{
\frac{e^{\sqrt{-1}\cdot\sum_j\frac{-\pi\gamma\mu_j}{a_jr}}\cdot\prod_j \mu_j }{{\sin}^{n+2g-2}(\pi\gamma/r)}
\cdot
\sum_{\boldsymbol{m}}
e^{\sqrt{-1}\cdot\sum_j\frac{-2\pi(\gamma +b_j^*\mu_j)m_j}{a_j}}
\right\} \\
&=&
\sum_{(\gamma,\boldsymbol{\mu})}\left\{
\frac{e^{\sqrt{-1}\cdot\sum_j\frac{-\pi\gamma\mu_j}{a_jr}}\cdot\prod_j \mu_j }{{\sin}^{n+2g-2}(\pi\gamma/r)}
\cdot
\prod_j
\sum_{m_j}e^{\sqrt{-1}\cdot\frac{-2\pi(\gamma +b_j^*\mu_j)m_j}{a_j}}
\right\}
\end{eqnarray*}
where 
$(\gamma,\boldsymbol{\mu})$ ranges in $\{1,2,\cdots,r-1\}\times\{\pm1\}^n$,
and $j$ in $\{1,\cdots,n\}$, and $m_j$ in $\Integral/a_j\Integral$.
In particular, the simplification applies 
as we assume $a_1=a_2=\cdots=a_n=a$, and $b_1+b_2+\cdots+b_n=0$,
and $r$ divisible by $a$.
\end{proof}

\begin{lemma}\label{sfs_vanishing_criterion}
Under the assumptions of Theorem \ref{main_sfs},
and assuming $r$ divisible by $a$,
if there does not exist any $\boldsymbol{\mu}\in \{\pm1\}^n$
that satisfies the congruence equations
$$b_1^*\mu_1\equiv b_2^*\mu_2\equiv \cdots\equiv b_n^*\mu_n\mod a,$$
then 
$$Z_r=0$$
where $Z_r$ is the term as in Lemma \ref{sfs_rt}.
\end{lemma}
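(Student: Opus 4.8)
The plan is to exploit the factored form of $Z_r$ provided by Lemma \ref{sfs_rt_simplified}, in which the dependence on the variables $m_1,\cdots,m_n$ has been completely separated into a product of inner sums over $\Integral/a\Integral$. The key elementary fact is the orthogonality of additive characters: for any integer $c$,
$$\sum_{m\in\Integral/a\Integral} e^{\sqrt{-1}\cdot\frac{-2\pi cm}{a}}=
\begin{cases} a & c\equiv0\bmod a\\ 0 & c\not\equiv0\bmod a.\end{cases}$$
Applying this to each inner sum in the displayed formula for $Z_r$, with $c=\gamma+b_j^*\mu_j$, I would conclude that the $j$-th factor $\sum_{m_j}e^{\sqrt{-1}\cdot(-2\pi(\gamma+b_j^*\mu_j)m_j/a)}$ equals $a$ when $\gamma\equiv -b_j^*\mu_j\bmod a$, and vanishes otherwise.

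First I would fix a pair $(\gamma,\boldsymbol{\mu})$ and ask when its contribution to $Z_r$ is nonzero. Since that contribution contains the product $\prod_j\sum_{m_j}(\cdots)$, it is nonzero only if every factor is nonzero, that is, only if $\gamma\equiv -b_j^*\mu_j\bmod a$ holds simultaneously for all $j\in\{1,\cdots,n\}$. Such a common $\gamma$ can exist only if the residues $-b_1^*\mu_1,\cdots,-b_n^*\mu_n$ are all congruent modulo $a$; since $-1$ is a unit modulo $a$, this is equivalent to
$$b_1^*\mu_1\equiv b_2^*\mu_2\equiv\cdots\equiv b_n^*\mu_n\bmod a.$$

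Under the hypothesis of the lemma, no $\boldsymbol{\mu}\in\{\pm1\}^n$ satisfies this chain of congruences. Hence for every pair $(\gamma,\boldsymbol{\mu})$ appearing in the sum, at least one inner factor vanishes, so the entire summand is zero. Summing over all $(\gamma,\boldsymbol{\mu})$ then yields $Z_r=0$, as claimed. Note that the range $\{1,\cdots,r-1\}$ of $\gamma$ plays no role: once the residues $-b_j^*\mu_j$ fail to be simultaneously congruent, \emph{no} value of $\gamma$ can solve all $n$ congruences.

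I do not anticipate a genuine obstacle here; the argument is a direct character-sum computation. The only point demanding mild care is the logical structure of the quantifiers, namely that the solvability of the $n$ congruences $\gamma\equiv -b_j^*\mu_j$ is a condition on $\boldsymbol{\mu}$ alone, independent of $\gamma$, so that the hypothesis can be invoked to rule out every $\boldsymbol{\mu}$ uniformly. One should also observe that the hypothesis is vacuous unless $n\geq2$, consistent with the fact that for $n\leq1$ the chain of congruences is automatically satisfiable.
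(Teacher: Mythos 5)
Your proposal is correct and follows essentially the same route as the paper: both apply the factored expression from Lemma \ref{sfs_rt_simplified}, invoke orthogonality of additive characters on $\Integral/a\Integral$ to see that the $(\gamma,\boldsymbol{\mu})$ summand survives only when $\gamma\equiv -b_j^*\mu_j\bmod a$ for all $j$, and then note that simultaneous solvability in $\gamma$ forces the chain of congruences $b_1^*\mu_1\equiv\cdots\equiv b_n^*\mu_n\bmod a$, which the hypothesis excludes. Your added remarks on the quantifier structure and the $n\leq1$ case are accurate but not needed beyond what the paper records.
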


\begin{proof}
For any fixed $(\gamma,\boldsymbol{\mu})$,
we observe
$$\sum_{m_j\,\in\Integral/a\Integral}e^{\sqrt{-1}\cdot\frac{-2\pi(\gamma +b_j^*\mu_j)m_j}{a}}
=\begin{cases} a &\mbox{if }\gamma+b_j^*\mu_j\equiv0 \bmod a\\
0 &\mbox{otherwise}\end{cases}$$
Therefore, 
in the simplified expression of $Z_r$ as in Lemma \ref{sfs_rt_simplified},
the summand corresponding to $(\gamma,\boldsymbol{\mu})$ is nonzero
if and only if $\gamma+b^*_j\mu_j\equiv 0\bmod a$ holds for all $j\in\{1,\cdots,n\}$.
For the sum $Z_r$ to be nonzero,
there has to be some $\gamma\in\{1,2,\cdots,r-1\}$ that 
satisfies the above condition
for some $\boldsymbol{\mu}\in\{\pm1\}^n$,
then there has to be some $\boldsymbol{\mu}$
that satisfies
$b_1^*\mu_1\equiv b_2^*\mu_2\equiv \cdots\equiv b_n^*\mu_n\mod a$.
\end{proof}

\begin{lemma}\label{sfs_nonvanishing}
Under the assumptions of Theorem \ref{main_sfs},
if there exists some integer $b^*$ coprime to $a$, and 
some $\boldsymbol{\nu}\in \{\pm1\}^n$, such that
$b^*\equiv b^*_j\nu_j\bmod a$ holds for all $j\in\{1,\cdots,n\}$,
then
$$Z_a=
%e^{\sqrt{-1}\cdot\frac{\pi b^*\cdot\sum_j\nu_j}{a^2}}\cdot
\frac{2\cdot a^{n}\cdot\prod_j\nu_j}{{\sin}^{n+2g-2}(\pi b^*/a)}$$
where $Z_a$ is the term as in Lemma \ref{sfs_rt} with $r=a$.
\end{lemma}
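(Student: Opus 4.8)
The plan is to begin from the simplified expression for $Z_a$ provided by Lemma \ref{sfs_rt_simplified} (taken at $r=a$, which is legitimate since $a\mid a$ and the rational Euler number $E=-\sum_j b_j/a=0$), and to determine precisely which index tuples survive. First I would evaluate the inner Gauss sums exactly as in the proof of Lemma \ref{sfs_vanishing_criterion}: for fixed $(\gamma,\boldsymbol{\mu})$ the factor $\sum_{m_j\in\Integral/a\Integral}e^{-\sqrt{-1}\cdot2\pi(\gamma+b_j^*\mu_j)m_j/a}$ equals $a$ when $\gamma+b_j^*\mu_j\equiv0\bmod a$ and vanishes otherwise. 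Hence the pair $(\gamma,\boldsymbol{\mu})$ contributes a nonzero summand, namely $a^n$ times the surviving factor $\prod_j\mu_j\cdot e^{-\sqrt{-1}\cdot\pi\gamma\sum_j\mu_j/a^2}/\sin^{n+2g-2}(\pi\gamma/a)$, exactly when $\gamma\equiv -b_j^*\mu_j\bmod a$ holds for every $j$.

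Next I would pin down the surviving tuples using the hypothesis. Since $b^*\equiv b_j^*\nu_j\bmod a$ is equivalent to $b_j^*\equiv b^*\nu_j\bmod a$, the contributing condition reads $\gamma\equiv -b^*\nu_j\mu_j\bmod a$ for all $j$. As $\gcd(b^*,a)=1$ and $a\geq3$, a single $\gamma$ can satisfy all of these only if the signs $\nu_j\mu_j\in\{\pm1\}$ are all equal; that is, $\boldsymbol{\mu}=\epsilon\boldsymbol{\nu}$ for one global sign $\epsilon\in\{\pm1\}$. For each $\epsilon$ the condition then fixes a unique $\gamma=\gamma_\epsilon\in\{1,\dots,a-1\}$ with $\gamma_\epsilon\equiv -\epsilon b^*\bmod a$ (nonzero because $b^*$ is a unit mod $a$), and $\gamma_++\gamma_-=a$. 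So $Z_a$ reduces to a sum of exactly two summands, one for $\epsilon=+1$ and one for $\epsilon=-1$.

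The decisive point is that the accompanying phase is trivial. Multiplying $b^*\equiv b_j^*\nu_j$ by $b_j$ yields $\nu_j\equiv b^*b_j\bmod a$, so $\sum_j\nu_j\equiv b^*\sum_j b_j=0\bmod a$ by the hypothesis $b_1+\cdots+b_n=0$. Because each $\nu_j=\pm1$ and $n<a$, the integer $\sum_j\nu_j$ has absolute value $<a$, forcing $\sum_j\nu_j=0$; in particular $n$ is even. Consequently $\sum_j\mu_j=\epsilon\sum_j\nu_j=0$, so each phase $e^{-\sqrt{-1}\cdot\pi\gamma_\epsilon\sum_j\mu_j/a^2}=1$, while $\prod_j\mu_j=\epsilon^n\prod_j\nu_j=\prod_j\nu_j$. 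Finally $\sin(\pi\gamma_\epsilon/a)=\pm\sin(\pi b^*/a)$ because $\gamma_\epsilon\equiv -\epsilon b^*\bmod a$, and since $n+2g-2$ is even the sign is immaterial, giving $\sin^{n+2g-2}(\pi\gamma_\epsilon/a)=\sin^{n+2g-2}(\pi b^*/a)$. Each of the two summands therefore equals $a^n\prod_j\nu_j/\sin^{n+2g-2}(\pi b^*/a)$, and adding them produces the claimed value $Z_a=2a^n\prod_j\nu_j/\sin^{n+2g-2}(\pi b^*/a)$.

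I expect the only genuinely nonroutine step to be the vanishing of the phase. Without it the two summands would carry a factor $e^{-\sqrt{-1}\cdot\pi\gamma_+\sum_j\nu_j/a^2}\bigl(1+(-1)^n e^{\sqrt{-1}\cdot\pi\sum_j\nu_j/a}\bigr)$, and it is precisely the interaction of $b_1+\cdots+b_n=0$ with the bound $n<a$ that both kills this phase (via $\sum_j\nu_j=0$) and supplies the parity $(-1)^n=1$ needed for the two terms to reinforce rather than cancel.
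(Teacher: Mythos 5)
Your proposal is correct and follows essentially the same route as the paper's proof: evaluate the inner Gauss sums to isolate the surviving pairs $(\gamma,\boldsymbol{\mu})$, use $b_1+\cdots+b_n=0$ together with $n<a$ to force $\sum_j\nu_j=0$ (the paper phrases this as $n-2m=0$ after reordering so that $\nu_j=1$ exactly for $j\leq m$), conclude that the two surviving summands have trivial phase and equal sine factors, and add them. Your treatment is in fact slightly more explicit than the paper's at the step identifying why exactly two tuples survive (the requirement that all $\nu_j\mu_j$ coincide, which needs $2b^*\not\equiv0\bmod a$), a point the paper asserts without comment.
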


\begin{proof}
Possibly after permuting $\{1,\cdots,n\}$,
we may assume
$\nu_j=1$ for $j=1,\cdots,m$ and $\nu_j=-1$ for $j=m+1,\cdots,n$.
We may also assume $b^*\in\{1,2,\cdots,a-1\}$
without changing its residue class modulo $a$.

For $r=a\geq3$, there are only two nonzero summands in $Z_r$, 
and their corresponding to $(\gamma,\boldsymbol{\mu})$ are 
$$(\gamma,\boldsymbol{\mu})=(a-b^*,(\underbrace{1,\cdots,1}_{m}, \underbrace{-1,\cdots,-1}_{n-m}))$$
and
$$(\gamma,\boldsymbol{\mu})=(b^*,(\underbrace{-1,\cdots,-1}_{m}, \underbrace{1,\cdots,1}_{n-m})).$$

By our assumption $\sum_j b_j=0$ in Theorem \ref{main_sfs},
$(2m-n)b\equiv mb+(n-m)(-b)\equiv\sum_j b_j=0\bmod a$,
so $n-2m$ is divisible by $a$.
By our assumption $a>n\geq0$ in Theorem \ref{main_sfs}, 
we must have $|n-2m|<a$, and hence $n-2m=0$.
So, we observe
%$$(-1)^{n-m}=(-1)^{m}=(-1)^{n-m+\frac{2m-n}{a}},$$
$$(-1)^{n-m}=(-1)^{m},$$
which is useful below.

We compute
\begin{eqnarray*}
Z_a(M)&=&
\frac{(-1)^{n-m}\cdot e^{\sqrt{-1}\cdot\frac{-\pi(2m-n)(a-b^*)}{a^2}}}{{\sin}^{n+2g-2}(\pi (a-b^*)/a)}\cdot a^n+
\frac{(-1)^{m}\cdot e^{\sqrt{-1}\cdot\frac{-\pi(n-2m)b^*}{a^2}}}{{\sin}^{n+2g-2}(\pi b^*/a)}\cdot a^n\\
&=&\frac{2\cdot a^{n}\cdot(-1)^{n-m}}{{\sin}^{n+2g-2}(\pi b^*/a)}\\
&=&
\frac{2\cdot a^{n}\cdot\prod_j\nu_j}{{\sin}^{n+2g-2}(\pi b^*/a)},
\end{eqnarray*}
as desired.
\end{proof}

\begin{lemma}\label{sfs_cohomology}
	Let $M$ be
	a closed Seifert fiber space 
	with orientable orbifold base and orientable fibration,
	and with symbol $(g;(a_1,b_1),\cdots,(a_n,b_n))$,
	where $g,n\geq0$ are integers, 
	and 
	where $a_j\geq1$ and $b_j$ are coprime pairs of integers, for 
	$j=1,\cdots,n$, and satisfy 
	$b_1/a_1+\cdots+b_n/a_n=0$.
	If $a_1,\cdots,a_n$ are all odd,
	$$\mathrm{TV}_{3,1}(M)=\mathrm{TV}_{3,2}(M)=2^{2g}.$$
\end{lemma}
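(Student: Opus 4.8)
The plan is to evaluate both invariants from the classical formulas recorded in Lemma~\ref{tv_r_odd}(4), thereby reducing the whole statement to a single homological fact: that $H_1(M;\Integral)$ contains no $2$--torsion. Since the symbol defines a connected, orientable, closed $3$--manifold, we have $\beta_0(M)=1$ and $w_1=0$, so those formulas specialize to
$$\mathrm{TV}_{3,2}(M)=2^{-1+\beta_2(M)},\qquad \mathrm{TV}_{3,1}(M)=2^{-1}\cdot\!\!\sum_{t\in H^1(M;\Integral/2\Integral)}\!\!(-1)^{\langle t^3,[M]\rangle}.$$
It therefore suffices to establish two things: that $\beta_1(M)=\beta_2(M)=2g+1$, and that the cubic form $t\mapsto\langle t^3,[M]\rangle$ vanishes identically on $H^1(M;\Integral/2\Integral)$. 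The equality $\beta_1=\beta_2$ is immediate from mod~$2$ Poincar\'e duality, so the substantive content is the computation of $\beta_1$ together with the vanishing of the cubic form.

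First I would compute $H_1(M;\Integral)$ from the standard presentation of the Seifert fundamental group: with generators $x_1,y_1,\dots,x_g,y_g,q_1,\dots,q_n,h$ (the last being the regular fiber) and abelianized relations $a_jq_j+b_jh=0$ and $q_1+\dots+q_n=0$, one obtains $H_1(M;\Integral)\cong\Integral^{2g}\oplus A$, where $A$ is presented by those two families of relations on $q_1,\dots,q_n,h$. The crucial step is to localize $A$ at the prime $2$: because every $a_j$ is odd, each $a_j$ is a unit in $\Integral_{(2)}$, so the relations $a_jq_j+b_jh=0$ let me solve $q_j=-a_j^{-1}b_j\,h$, and substituting into $\sum_jq_j=0$ turns the remaining relation into $\big(\sum_j b_j/a_j\big)h=0$. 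Here the hypothesis $b_1/a_1+\dots+b_n/a_n=0$ makes this relation vacuous, so $A\otimes\Integral_{(2)}\cong\Integral_{(2)}$ is free of rank one. Hence $A$, and with it $H_1(M;\Integral)$, has no $2$--torsion; the same computation gives $A\otimes\Integral/2\Integral\cong\Integral/2\Integral$, so $\beta_1(M)=2g+1$, and $\beta_2(M)=2g+1$ by Poincar\'e duality, whence $\mathrm{TV}_{3,2}(M)=2^{-1+2g+1}=2^{2g}$. This $2$--local calculation, in which \emph{both} hypotheses (all $a_j$ odd and vanishing rational Euler number) enter, is the one real obstacle; the rest is formal.

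It remains to handle the cubic form. Since $M$ is orientable, Poincar\'e duality gives $H^2(M;\Integral)\cong H_1(M;\Integral)$, which we have just seen has no $2$--torsion; by the Bockstein exact sequence this forces the integral Bockstein to vanish on $H^1(M;\Integral/2\Integral)$, hence $\mathrm{Sq}^1=0$ there. As $\mathrm{Sq}^1 t=t^2$ for any $t\in H^1$, we conclude $t^2=0$, so in particular $t^3=0$ and $\langle t^3,[M]\rangle=0$ for every $t$. The sum in the formula for $\mathrm{TV}_{3,1}(M)$ therefore counts all of $H^1(M;\Integral/2\Integral)$, a group of order $2^{\beta_1(M)}=2^{2g+1}$, giving $\mathrm{TV}_{3,1}(M)=2^{-1}\cdot 2^{2g+1}=2^{2g}$, as claimed.
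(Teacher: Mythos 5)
Your proof is correct, and its second half takes a genuinely different route from the paper's. For the Betti number computation you and the paper do essentially the same thing---use the oddness of the $a_j$ to eliminate the singular-fiber generators and the hypothesis $b_1/a_1+\cdots+b_n/a_n=0$ to see that the leftover relation is vacuous---except that you work over $\Integral_{(2)}$ rather than $\Integral/2\Integral$, which is precisely the strengthening (no $2$--torsion in $H_1(M;\Integral)$, not merely $\beta_1=2g+1$) that your second half requires. The real divergence is in proving that the cubic form vanishes: the paper invokes the mapping-torus structure of $M$ (Lemma \ref{mapping_torus_sfs}) to produce a cyclic cover $\tilde{M}\to M$ of odd degree with $\tilde{M}$ a product of a surface and a circle, argues that $H^*(M;\Integral/2\Integral)\to H^*(\tilde{M};\Integral/2\Integral)$ is injective for odd-degree covers, and then kills cubes using the K\"unneth ring structure of $\tilde{M}$; you instead convert the absence of $2$--torsion, via Poincar\'e duality and the Bockstein exact sequence, into the vanishing of $\mathrm{Sq}^1$ on $H^1(M;\Integral/2\Integral)$, hence $t^2=0$ for every $t$, which is stronger than the $t^3=0$ the paper establishes. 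Your argument is more self-contained---it needs no covering-space geometry and no appeal to Lemma \ref{mapping_torus_sfs}, only the homology presentation you already computed plus standard facts about Bocksteins and Steenrod squares---and it yields the sharper conclusion that all squares from degree one vanish in the mod $2$ cohomology ring; the paper's argument, in exchange, recycles machinery already set up earlier and stays closer to the geometric picture of $M$ as a periodic mapping torus.
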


\begin{proof}
Denote 
$$\mathrm{lcm}(a_1,\cdots,a_n)=d.$$ 
If $a_1,\cdots,a_n$ are all odd, $d$ is also odd.

The fundamental group of $M$ has a presentation
with generators 
$$x_1,y_1,\cdots,x_g,y_g,s_1,\cdots,s_n,f$$
and relations
$$
\begin{cases}
s_1\cdots s_n=[x_1,y_1]\cdots[x_g,y_g]\\
x_if=fx_i & i=1,\cdots,g\\
y_if=fy_i & i=1,\cdots,g\\
s_jf=fs_j & j=1,\cdots,n\\
s_j^{a_j}f^{b_j}=1 & j=1,\cdots,n\\
\end{cases}
$$
With $\Integral/2\Integral$ coefficients,
we can eliminate any $[s_j]$
using the relation $a_j [s_j] + b_j [f]=0$,
since $a_j$ is odd.
Then the first relation is equivalent to $-(b_1d/a_1+\cdots+b_nd/a_n)[f]=0$ 
over $\Integral/2\Integral$, having no effect by our assumption $b_1/a_1+\cdots+b_n/a_n=0$.
It follows that $H_1(M;\Integral/2\Integral)$ is 
freely generated by $[x_1],[y_1],\cdots,[x_g],[y_g],[f]$ over $\Integral/2\Integral$.
Then the $\Integral/2\Integral$ Betti numbers of $M$
are $\beta_0=\beta_3=1$ and $\beta_2=\beta_1=2g+1$,
by the Poincar\'e duality with $\Integral/2\Integral$ coefficients.
Therefore, we obtain 
$$\mathrm{TV}_{3,2}(M)=2^{2g},$$
by Lemma \ref{tv_r_odd}.

Since $M$ is homeomorphic to the mapping torus of a periodic surface automorphism of order $d$, 
there is a cyclic cover $\tilde{M}\to M$ of degree $d$, 
and $\tilde{M}$ is a product of a closed orientable surface with a circle.
Since $d$ is odd, the induced homomorphism $H^*(M;\Integral/2\Integral)\to H^*(\tilde{M};\Integral/2\Integral)$
is injective, 
(because of the Poincar\'e duality pairing with $\Integral/2\Integral$ coefficients 
and the isomorphism on the top dimension).
However, $H^1(\tilde{M};\Integral/2\Integral)$ contains no element whose cube is nontrivial,
(by the K\"unneth theorem
which determines the cohomology ring of $\tilde{M}$ over $\Integral/2\Integral$).
It follows that $t^3=0\in H^3(M;\Integral/2\Integral)$ holds for any $t\in H^1(M;\Integral/2\Integral)$.
Moreover, the first Stiefel--Whitney class $w_1$ of $M$ vanishes, as $M$ is orientable.
Therefore, we obtain
$$\mathrm{TV}_{3,1}(M)=2^{2g},$$
by Lemma \ref{tv_r_odd}.
\end{proof}

Under the assumptions of Theorem \ref{main_sfs}, 
we compute the Turaev--Viro invariants in Theorem \ref{main_sfs} as follows.

Suppose that $b^*b_j\equiv \nu_j\bmod a$ holds for some integer $b^*$ coprime to $a$
and some $\nu_j\in\{\pm1\}$, and for all $j\in\{1,\cdots,n\}$.
Then,
\begin{eqnarray*}
\mathrm{TV}_{a,1}(M)
&=& \mathrm{TV}_{a,1}(S^3)\cdot|\tau_a(M)|^2\\
&=& |\tau_a(M)/\tau_a(S^2\times S^1)|^2\\
&=& \left|\frac{a^{g-1}}{2^{n+g-1}a^{n/2}}\cdot \frac{2\cdot a^{n}}{{\sin}^{n+2g-2}(\pi b^*/a)}\right|^2\\
&=&\frac{a^{n+2g-2}}{2^{2n+2g-4}}\cdot\frac{1}{\sin^{2n+4g-4}(\pi b^*/a)}
\end{eqnarray*}
by Lemmas \ref{sfs_rt}, \ref{sfs_nonvanishing}, and \ref{tv_and_rt}.

When $a$ is even, 
we apply Galois conjugacy (transforming $e^{\sqrt{-1}\cdot\pi/a}\mapsto e^{\sqrt{-1}\cdot\pi s/a}$)
for any $s$ coprime to $a$, obtaining
$$\mathrm{TV}_{a,s}(M)=\frac{a^{n+2g-2}}{2^{2n+2g-4}}\cdot\frac{1}{\sin^{2n+4g-4}(\pi b^* s/a)},$$
by Lemma \ref{tv_r_odd}.

When $a$ is odd,
we apply Lemmas \ref{tv_r_odd} and \ref{sfs_cohomology}, obtaining
$$\mathrm{TV}_{a,a-1}(M)=\mathrm{TV}_{3,2}(M)\cdot\mathrm{TV}'_{a,a-1}(M)
=\frac{\mathrm{TV}_{3,2}(M)}{\mathrm{TV}_{3,1}(M)}\cdot\mathrm{TV}_{a,1}(M)=\mathrm{TV}_{a,1}(M)$$
%by Lemmas \ref{tv_r_odd} and \ref{sfs_cohomology};
and
$$\mathrm{TV}'_{a,a-1}(M)=\frac{\mathrm{TV}_{a,1}(M)}{\mathrm{TV}_{3,1}(M)}
=\frac{1}{2^{2g}}\cdot\mathrm{TV}_{a,1}(M).$$
%=\frac{a^{n-1}}{2^{2n+4g-4}}\cdot\frac{\sin^2(\pi/a)}{\sin^{2n+4g-4}(\pi b^*/a)}$$
Again, 
we apply Galois conjugacy (transforming $e^{\sqrt{-1}\cdot\pi/a}\mapsto e^{\sqrt{-1}\cdot\pi s/a}$
or $e^{\sqrt{-1}\cdot\pi(a-1)/a}\mapsto e^{\sqrt{-1}\cdot\pi s/a}$
depending on $s$ odd or even),
obtaining for any $s$ coprime to $a$
$$\mathrm{TV}_{a,s}(M)=\frac{a^{n+2g-2}}{2^{2n+2g-4}}\cdot\frac{1}{\sin^{2n+4g-4}(\pi b^* s/a)},$$
and if $s$ is even,
$$\mathrm{TV}'_{a,s}(M)=\frac{a^{n+2g-2}}{2^{2n+4g-4}}\cdot\frac{1}{\sin^{2n+4g-4}(\pi b^* s/a)},$$
by Lemma \ref{tv_r_odd}.
This completes the computation of the nonvanishing values in Theorem \ref{main_sfs}.

Suppose the otherwise case. 
Then, for all $r\geq3$ divisible by $a$, we obtain
$$\mathrm{TV}_{r,1}(M)=0,$$
by Lemmas \ref{sfs_rt}, \ref{sfs_vanishing_criterion}, and \ref{tv_and_rt}.
Similarly, we derive $\mathrm{TV}_{r,r-1}(M)=\mathrm{TV}'_{r,r-1}(M)=0$ in this case,
using Lemmas \ref{tv_r_odd} and \ref{sfs_cohomology}.
Finally, by Galois conjugacy (Lemma \ref{tv_r_odd}), we see that 
$\mathrm{TV}_{r,s}(M)=0$ and $\mathrm{TV}'_{r,s}(M)=0$ for any applicable $s$.

This completes the proof of Theorem \ref{main_sfs}.

\section{Examples}\label{Sec-examples}
In this section, we prove Theorem \ref{main_hempel} by exhibiting 
nontrivial Hempel pairs that can or cannot be distinguished by Turaev--Viro invariants.
See Example \ref{example_distinguishable} for our distinguishable ones,
and Example \ref{example_indistinguishable} for our indistinguishable ones.

We need the following lemma for verifying our examples.
A rationality statement would be good enough for our application,
but the integrality here is also well-known to experts.

\begin{lemma}\label{integrality}
Let $S$ be a connected orientable closed surface of genus $g\geq0$,
and $[f]\in\mathrm{Mod}(S)$ be a periodic mapping class of order $d\geq1$.
Then, for any integer $r\geq3$ coprime to $d$, and integer $s$ coprime to $r$,
$$\mathrm{TV}_{r,s}(M_f)\in\Integral,$$
and if $r$ is odd and $s$ is even,
$$\mathrm{TV}'_{r,s}(M_f)\in\Integral,$$
where $M_f$ denotes the mapping torus of $f$.
\end{lemma}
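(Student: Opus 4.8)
The plan is to prove the integrality statement by realizing $\mathrm{TV}_{r,s}(M_f)$ as the trace of a finite-order element in a representation defined over the ring of algebraic integers, and then exploiting that the relevant eigenvalues are $d$-th roots of unity while the value lies in a cyclotomic field whose conductor is coprime to $d$.

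First I would invoke Lemma \ref{mapping_torus_tv}, which expresses $\mathrm{TV}_{r,s}(M_f)=\mathrm{tr}_{\Complex}\left(\rho^{\mathrm{TV}}_{r,s}([f])\right)$. Since $[f]$ has order $d$, the operator $\rho^{\mathrm{TV}}_{r,s}([f])$ satisfies $\rho^{\mathrm{TV}}_{r,s}([f])^d=\mathrm{id}$, so its eigenvalues are $d$-th roots of unity and its trace is a sum of such roots. Consequently $\mathrm{TV}_{r,s}(M_f)$ is an algebraic integer lying in $\Rational(\zeta_d)$. On the other hand, by the discussion following Notation \ref{color_terms} and Lemma \ref{tv_property}, the value $\mathrm{TV}_{r,s}(M_f)$ is obtained by specializing $\mathrm{TV}(M_f;q^{1/2})$ at $q^{1/2}=e^{\sqrt{-1}\cdot\pi s/r}$, hence lies in the cyclotomic field $\Rational(e^{\sqrt{-1}\cdot\pi s/r})\subseteq\Rational(\zeta_{2r})$. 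Thus the value lies in the intersection $\Rational(\zeta_d)\cap\Rational(\zeta_{2r})=\Rational(\zeta_{\gcd(d,2r)})$.

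Next I would use the coprimality hypothesis. Since $r$ is coprime to $d$ and $s$ is coprime to $r$, the only common factor $d$ can share with $2r$ is a factor of $2$; concretely $\gcd(d,2r)\in\{1,2\}$, so the value lies in $\Rational(\zeta_2)=\Rational$. Being simultaneously a rational number and an algebraic integer, $\mathrm{TV}_{r,s}(M_f)$ must be a rational integer, which is exactly the assertion $\mathrm{TV}_{r,s}(M_f)\in\Integral$. The argument for $\mathrm{TV}'_{r,s}$ when $r$ is odd and $s$ is even is entirely parallel, using the $\mathrm{SO}(3)$ TQFT representation in place of $\rho^{\mathrm{TV}}_{r,s}$, together with the analogous trace formula; one checks that the same field-intersection computation applies, since the $\mathrm{SO}(3)$ invariant is also valued in a subfield of $\Rational(\zeta_{2r})$.

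The main obstacle I anticipate is justifying that the TQFT representation $\rho^{\mathrm{TV}}_{r,s}$ can be defined over the algebraic integers, so that the eigenvalues of a finite-order element are genuinely roots of unity of integral character (rather than merely the trace being an algebraic number of modulus bounded by the dimension). The cleanest route is to work first with the \emph{abstract} invariant $\mathrm{TV}(M_f;q^{1/2})\in\Rational(q^{1/2})$ and argue at that level: the abstract TQFT vector space $\mathcal{Z}^{\mathrm{TV}}(S)$ carries a $\mathrm{Mod}(S)$-action over $\mathbb{K}=\Rational(q^{1/2})$, the image of $[f]$ has order dividing $d$, and its trace is therefore a sum of $d$-th roots of unity inside $\mathbb{K}=\Rational(q^{1/2})$. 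Then I would identify $\mathbb{K}\cap\overline{\Rational(\zeta_d)}$ (taken inside a common algebraic closure) as a field of roots of unity with conductor dividing $\gcd(2r,d)$, and run the coprimality computation abstractly before specializing. This sidesteps any delicate claim about integral models of the TQFT and reduces everything to the elementary cyclotomic intersection, which is the step I expect to require the most care in phrasing precisely.
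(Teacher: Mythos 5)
Your proposal is correct and follows essentially the same route as the paper's proof: Lemma \ref{mapping_torus_tv} realizes $\mathrm{TV}_{r,s}(M_f)$ as the trace of the finite-order operator $\rho^{\mathrm{TV}}_{r,s}([f])$, whose eigenvalues are roots of unity of order dividing $d$, so the value is an algebraic integer lying in $\Rational(\zeta_d)$; it also lies in $\Rational(e^{\sqrt{-1}\cdot\pi s/r})$, and the coprimality of $d$ and $r$ makes the cyclotomic intersection equal to $\Rational$, forcing the value to be a rational integer. Two remarks. First, your anticipated ``main obstacle'' is not an obstacle: no integral model of the TQFT is needed, since any finite-order element of $\mathrm{GL}(V)$ over a field of characteristic zero is annihilated by $x^d-1$ and therefore has root-of-unity eigenvalues --- this is exactly the argument the paper runs, with the intersection fact quoted from \cite[Chapter 2, Proposition 2.4]{Washington_book_cyclotomic}. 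Second, the one place where you genuinely deviate is $\mathrm{TV}'_{r,s}$: the paper does not execute the ``entirely parallel'' $\mathrm{SO}(3)$ argument you propose, because it never constructs an $\mathrm{SO}(3)$ TQFT functor with a trace formula; instead it derives rationality of $\mathrm{TV}'_{r,s}(M_f)$ from the $\mathrm{SU}(2)$ case via Lemmas \ref{tv_r_odd} and \ref{sfs_cohomology}, and cites Detcherry--Kalfagianni \cite[Corollary 6.1]{DK_monodromy} for the integrality. Your version therefore rests on an input (the $\mathrm{SO}(3)$ Turaev--Viro TQFT and its trace property for mapping tori) that is true but would need either a construction or a reference to be complete.
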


\begin{proof}
By Lemma \ref{mapping_torus_tv}, $\mathrm{TV}_{r,s}(M_f)\in \Complex$
is equal to the trace of the Turaev--Viro TQFT representation 
$\rho^{\mathrm{TV}}_{r,s}([f])\in\mathrm{GL}(\mathcal{Z}^{\mathrm{TV}}_{r,s}(S))$.
If $[f]\in\mathrm{Mod}(S)$ is periodic of order $d$, the eigenvalues 
of $\rho^{\mathrm{TV}}_{r,s}([f])$ are all complex roots of unity of order divisible by $d$.
In particular, $\mathrm{TV}_{r,s}(M_f)$ is an algebraic integer.

On the other hand, entries of $\rho^{\mathrm{TV}}_{r,s}([f])$
lie in the cyclotomic subfield 
$\Rational(e^{\sqrt{-1}\cdot\pi s/r})$ of $\Complex$.
For any roots of unity $\zeta_m,\zeta_n\in\Complex$ of coprime orders $m,n\geq1$,
respectively, it is an elementary fact 
that $\Rational(\zeta_m)\cap\Rational(\zeta_n)$ equals $\Rational$
\cite[Chapter 2, Proposition 2.4]{Washington_book_cyclotomic}.
Applying with $m|d$ and $n=r$ 
(taking $\zeta_n=e^{\sqrt{-1}\cdot\pi s/r}$, 
or if $r$ is odd and $s$ is even,
$\zeta_n=e^{\sqrt{-1}\cdot \pi (r-s)/r}$),
we obtain $\mathrm{TV}_{r,s}(M_f)\in\Rational$.
Together with the algebraic integrality,
we obtain $\mathrm{TV}_{r,s}(M_f)\in\Integral$.

With Lemmas \ref{tv_r_odd} and \ref{sfs_cohomology},
one may derive $\mathrm{TV}'_{r,s}(M_f)\in\Rational$
from $\mathrm{TV}_{r,s}(M_f)\in\Integral$.
To obtain $\mathrm{TV}'_{r,s}(M_f)\in\Integral$,
it is possible to appeal to a similar lemma as Lemma \ref{mapping_torus_tv}
with a TQFT functor associated to $\mathrm{TV}'$.
In fact, this case has been established by Detcherry and Kalfagianni.
We refer to \cite[Corollary 6.1]{DK_monodromy} 
for their proof of this case.
\end{proof}

\begin{example}[Distinguishable pairs]\label{example_distinguishable}
	Let $g\geq0$ and $d=5$ or $d\geq7$ be integers, 
	and $k$ be an integer coprime to $d$.
	Let $M_A$ and $M_B$ be closed Seifert fiber spaces 
	with orientable orbifold base and orientable Seifert fibration.
	We assign their symbols as
	\begin{eqnarray*}
	M_A &\colon &(g;(d,1),(d,1),(d,-1),(d,-1))\\
	M_B &\colon &(g;(d,k^*),(d,k^*),(d,-k^*),(d,-k^*))
	\end{eqnarray*}
	where $k^*$ is an integer satisfing $k^*k\equiv1\bmod p$.
	The $3$--manifold $M_A$ is homeomorphic to the mapping torus
	of some periodic mapping class $[f_A]\in\mathrm{Mod}(S)$ of order $d$,
	where $S$ is a connected orientable closed surface
	of genus $dg+d-2$ (Lemma \ref{mapping_torus_sfs}).
	The $3$--manifold $M_B$ is homeomorphic to the mapping torus
	of the iterate mapping class $[f_B]=[f_A^k]$ (Lemma \ref{mapping_torus_iterate}).
	
	By Theorem \ref{main_sfs},
	we compute 
	\begin{eqnarray*}
	\mathrm{TV}_{d,1}(M_A)&=&\frac{d^{2g+2}}{2^{2g+4}}\cdot\frac{1}{{\sin}^{4g+4}(\pi /d)}\\
	\mathrm{TV}_{d,1}(M_B)&=&\frac{d^{2g+2}}{2^{2g+4}}\cdot\frac{1}{{\sin}^{4g+4}(\pi k/d)}
	\end{eqnarray*}
	The values are equal if and only if $k\equiv\pm1\bmod d$,
	namely, $[f_B]=[f_A]$ or $[f_B]=[f_A^{-1}]$.
	For $k$ other than these values,
	one may also check that $\mathrm{TV}_{d,s}(M_A)\neq \mathrm{TV}_{d,s}(M_B)$
	for any integer $s$ coprime to $d$, and if $d$ is odd,
	$\mathrm{TV}'_{d,s}(M_A)\neq \mathrm{TV}'_{d,s}(M_B)$
	for any even integer $s$ coprime to $d$.
	Under our assumption on $d$,
	such $k$ does exist,
	so $M_A$ and $M_B$ form a nontrivial Hempel pair.
\end{example}

\begin{example}[Indistinguishable pairs]\label{example_indistinguishable}
	Let $g\geq0$ be any integers, and $p\geq5$ be a prime integer,
	and $k$ be an integer coprime to $p$.
	Let $M_A$ and $M_B$ be closed Seifert fiber spaces 
	with orientable orbifold base and orientable Seifert fibration.
	We assign their symbols as
	\begin{eqnarray*}
	M_A &\colon &(g;(p,1),(p,1),(p,-2))\\
	M_B &\colon &(g;(p,k^*),(p,k^*),(p,-2k^*))
	\end{eqnarray*}
	where $k^*$ is an integer satisfing $k^*k\equiv1\bmod p$.
	Obtain 
	the connected orientable closed surface $S$ of genus $pg+(p-1)/2$,
	and the periodic mapping classes $[f_B]=[f_A^k]$ of order $p$,
	similarly as in the previous example 
	(Lemmas \ref{mapping_torus_sfs} and \ref{mapping_torus_iterate}).
		
	Again, $[f_A]$ and $[f_B]$ 
	form a nontrivial Hempel pair exactly when $k\not\equiv\pm1\bmod p$,
	existing under the assumption on $p$.
	
	If $r\geq3$ is divisible by $p$,
	applying Theorem \ref{main_sfs} and Lemma \ref{integrality},
	we see that $\mathrm{TV}_{r,s}(M_A)=\mathrm{TV}_{r,s}(M_B)=0$
	holds for any $s$ coprime to $r$,
	and moreover, if $r$ is odd and $s$ is even,
	$\mathrm{TV}'_{r,s}(M_A)=\mathrm{TV}'_{r,s}(M_B)=0$ also holds.
	
	If $r\geq3$ is not divisible by $p$,
	then it is coprime to $p$.
	By Lemma \ref{integrality}, 
	$\mathrm{TV}_{r,s}(M_A)$ and $\mathrm{TV}_{r,s}(M_B)$ are rational.
	Since $\mathrm{TV}_{r,s}(M_A)$ and $\mathrm{TV}_{r,s}(M_B)$
	are the traces of the Turaev--Viro TQFT representations $\rho^{\mathrm{TV}}_{r,s}$
	of the periodic mapping class $[f_A]$ and $[f_B]=[f_A^k]$, respectively,
	the eigenvalues of $[f_A]$ are roots of unity of order dividing $d$,
	and the eigenvalues of $[f_B]$ are their Galois conjugates
	under the transformation $e^{\sqrt{-1}\cdot2\pi/d}\mapsto e^{\sqrt{-1}\cdot2\pi k/d}$.
	Then by the rationality, we obtain $\mathrm{TV}_{r,s}(M_A)=\mathrm{TV}_{r,s}(M_B)$
	for any $s$ coprime to $r$.
	Moreover, if $r$ is odd and $s$ is even, we apply Lemma \ref{tv_r_odd} and \ref{sfs_cohomology}
	to deduce $\mathrm{TV}'_{r,s}(M_A)=\mathrm{TV}'_{r,s}(M_B)$.
\end{example}

\appendix

\section{Splitting of Turaev--Viro invariants at odd levels}\label{Sec-splitting}
In this appendix section,
we prove the formula in Lemma \ref{tv_r_odd} about $\mathrm{TV}_{r,s}$
when $r$ and $s$ are both odd.
We restate this part as a separate theorem ,
and make a couple of remarks regarding former results.

\begin{theorem}\label{tv_r_odd_s_odd}
Let $M$ be any closed $3$--manifold.
Let $r\geq3$ be an odd integer and $s$ be an integer coprime to $r$.
Adopt Notation \ref{tv_customary}. 

If $s$ is odd, then the following formula holds.
$$\mathrm{TV}_{r,s}(M)=\mathrm{TV}_{3,1}(M)\cdot\mathrm{TV}'_{r,r-s}(M).$$
\end{theorem}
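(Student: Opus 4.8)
The plan is to prove the identity directly from the state-sum definition (\ref{tv_def}), fixing an arbitrary triangulation $\mathscr{T}=(V,E,F,T)$ of $M$ and exhibiting a factorization of the weighted sum over colorings; this keeps everything combinatorial and never uses an orientation, which is why the argument will cover the non-orientable case for free. The starting point is the parity involution $\iota\colon I_r\to I_r$, $\iota(i)=r-2-i$, which is well-defined for $r\geq3$ and which exchanges even and odd colors because $r$, hence $r-2$, is odd. Using $\iota$, I would encode each color $i\in I_r$ by a pair $(\alpha,\beta)$, where $\alpha\in\{0,1\}=I_3$ records the parity of $i$ and $\beta\in I'_r$ is the even representative $\beta=i$ when $i$ is even and $\beta=r-2-i$ when $i$ is odd. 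This sets up a bijection between colorings $c\in\mathcal{A}_r$ and pairs consisting of a map $\alpha\colon E\to I_3$ and a map $\beta\colon E\to I'_r$.

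First I would check that this bijection respects admissibility: $c\in\mathcal{A}_r$ if and only if its parity part $\alpha$ lies in $\mathcal{A}_3(M,\mathscr{T})$ and its even part $\beta$ lies in $\mathcal{A}'_r(M,\mathscr{T})$. The parity half is immediate, since admissibility of a triple $(i,j,k)$ forces $i+j+k$ even, which is exactly admissibility of $(\alpha_i,\alpha_j,\alpha_k)$ at level $1$. The triangle inequalities and the upper bound $i+j+k\leq 2r-4$ then have to be translated through the folding $\iota$ into the corresponding constraints on $(\beta_i,\beta_j,\beta_k)$; this is a finite but slightly delicate case analysis according to how many of $i,j,k$ are odd.

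The core step is to factor the local weights of Notation \ref{color_terms}. Writing $q^{1/2}=e^{\sqrt{-1}\cdot\pi s/r}$ and using $\sin(\pi(r-s)t/r)=\sin(\pi s t/r)$ together with the key identity $\sin(\pi s(r-1-\beta)/r)=\sin(\pi s(\beta+1)/r)$, which holds precisely because $s$ is \emph{odd} (this is exactly where oddness of $s$, rather than the evenness treated in \cite{DKY}, forces the $(3,1)$ specialization instead of the $(3,2)$ one), I would verify that the weight of a single edge, face, and tetrahedron splits as the product of the corresponding $\mathrm{TV}_{3,1}$-weight of $\alpha$ and $\mathrm{TV}'_{r,r-s}$-weight of $\beta$. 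For edges this is the quick computation $|e|_c=(-1)^{\alpha}\cdot[\beta+1]$, the two level-$1$ quantum dimensions being equal to $1$ at $(3,1)$; for faces it reduces to comparing the sign changes $[m]!\mapsto(-1)^{m(m-1)/2}[m]!$ accumulated under $\iota$. The genuinely laborious part, which I expect to be the main obstacle, is to show that the $\mathrm{SU}(2)$ $6j$-symbol $|t|_c$ equals the $\mathrm{SO}(3)$ $6j$-symbol $|t|_\beta$ times the (purely sign-valued) level-$1$ tetrahedral weight $|t|_\alpha$; one must show that the alternating sum over the index $z$ is unaffected, up to this sign, by the folding of the odd colors.

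Granting the two factorizations, I would assemble the conclusion. Since admissibility and all local weights factor, $|\mathscr{T}|_c=|\mathscr{T}|_\alpha\cdot|\mathscr{T}|_\beta$, and the sum over $\mathcal{A}_r$ becomes the product of the sum over $\mathcal{A}_3$ and the sum over $\mathcal{A}'_r$. It then remains only to match the vertex prefactors: the base of the $\mathrm{TV}_{3,1}$ prefactor is $\tfrac12$ and that of the $\mathrm{TV}'_{r,r-s}$ prefactor is $4\sin^2(\pi s/r)/r$, whose product $2\sin^2(\pi s/r)/r$ is exactly the base of the $\mathrm{TV}_{r,s}$ prefactor, so the powers indexed by $|V|$ agree. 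This yields $\mathrm{TV}_{r,s}(M)=\mathrm{TV}_{3,1}(M)\cdot\mathrm{TV}'_{r,r-s}(M)$, with no orientability hypothesis used anywhere, as claimed.
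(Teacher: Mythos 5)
Your overall architecture (the bijection $\mathcal{A}_r\cong\mathcal{A}_3\times\mathcal{A}'_r$, factorization of the state sum, and the matching of vertex prefactors $\tfrac12\cdot\frac{4\sin^2(\pi s/r)}{r}=\frac{2\sin^2(\pi s/r)}{r}$) agrees with the paper, and the edge and face factorizations you describe are indeed true. But the step you flag as ``the genuinely laborious part'' --- the claim that for each individual tetrahedron $t$ one has $\mathrm{ev}_{r,s}(|t|_c)=\mathrm{ev}_{3,1}(|t|_{c_3})\cdot\mathrm{ev}_{r,r-s}(|t|_{c'})$ --- is \emph{false} when $s$ is odd, so the purely local strategy cannot be completed. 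Concretely, combining Lemma \ref{x_s_even} (applied at the even parameter $r-s$) with the sign-change computation of Lemmas \ref{sign_change_x} and \ref{sign_change_x_more}, one finds
$$\mathrm{ev}_{r,s}\left(|t|_c\right)=(-1)^{\lambda(t,c)}\cdot\mathrm{ev}_{3,1}\left(|t|_{c_3}\right)\cdot \mathrm{ev}_{r,r-s}\left(|t|_{c'}\right),
\qquad \lambda(t,c)=\tfrac{1}{2}\left(i_3l'+l_3i'+j_3m'+m_3j'+k_3n'+n_3k'\right),$$
and $\lambda(t,c)$ is not always even. For example, take $r=5$ and a tetrahedron with edge colors $(i,j,k,l,m,n)=(1,1,2,2,2,1)$ in the labeling of Notation \ref{color_terms}; all four faces $(1,1,2),(1,2,1),(1,2,1),(2,2,2)$ are admissible, and one computes $\lambda=\tfrac12(1\cdot2+0+1\cdot2+0+0+1\cdot2)=3$, so the local identity fails by a sign. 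The quantum-integer identity $\sin(\pi s(r-1-\beta)/r)=\sin(\pi s(\beta+1)/r)$ that you invoke controls the absolute values but not the signs that accumulate in the alternating sum over $z$ in the $6j$-symbol; these signs do not cancel tetrahedron by tetrahedron.

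What saves the theorem --- and what the paper's proof actually does --- is that the discrepancy is global rather than local: the total $\sum_{t\in T}\lambda(t,c)$ is always even when $\mathscr{T}$ triangulates a \emph{closed} $3$--manifold. The paper proves this (Lemma \ref{sign_change_overall}) by a topological argument: each term $c'(e^*)/2$ enters $\sum_t\lambda(t,c)$ once for every edge in the link of $e^*$ that meets the normal surface $\mathcal{S}(c_3)$ determined by the parity coloring, and since $\mathrm{lk}(e^*)$ is a closed loop and $\mathcal{S}(c_3)$ is a closed surface, this count is even. Equivalently, the global sign relating $\mathrm{ev}_{r,s}(|\mathscr{T}|_c)$ and $\mathrm{ev}_{r,r-s}(|\mathscr{T}|_c)$ is $(-1)^{\chi(\mathcal{S}(c_3))}$, which matches the sign relating $\mathrm{ev}_{3,1}$ and $\mathrm{ev}_{3,2}$, whence Lemma \ref{T_s_odd}. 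So to repair your proposal you would need to abandon the per-tetrahedron factorization, keep your per-edge and per-face identities, quantify the per-tetrahedron failure by $(-1)^{\lambda(t,c)}$, and then supply precisely this global cancellation argument --- at which point you have reproduced the paper's proof. Note also that closedness of $M$ is genuinely used here (it is not merely a standing hypothesis), since the cancellation is a statement about edge links and closed normal surfaces.
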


\begin{remark}\label{tv_r_odd_s_odd_remark}\
\begin{enumerate}
\item
Sokolov obtains a canonical splitting of $\mathrm{TV}_{r,s}$
into the sum of three refined invariants \cite{Sokolov_three}.
When $r$ is odd and $s$ is odd, 
one may identify the three refined invariants
(the zeroth, the first, and the second in Sokolov's definition)
as $\mathrm{TV}'_{r,r-s}$, and
$(\mathrm{TV}_{r,s}-\mathrm{TV}_{r,r-s})/2$,
and $(\mathrm{TV}_{r,s}+\mathrm{TV}_{r,r-s})/2-\mathrm{TV}'_{r,r-s}$.
In this case,
the splitting of $\mathrm{TV}_{r,s}$ 
is proportional to the splitting of $\mathrm{TV}_{3,1}$.
Similarly, 
when $r$ is odd and $s$ is even, the splitting of $\mathrm{TV}_{r,s}$
is proportional to the splitting of $\mathrm{TV}_{3,2}$.
Compare \cite[Theorem 1.5]{BHMV_tqft}.
\item
In the same paper, Sokolov quickly points out 
Lemma \ref{sign_change_overall} below,
with assumption of orientability.
See the formula (1) in \cite[Proof of Lemma 2.2]{Sokolov_three}.
\end{enumerate}
\end{remark}

The rest of this section is devoted to the proof of Theorem \ref{tv_r_odd_s_odd}.

Our strategy is to derive needed ingredients from
the proof of Detcherry, Kalfagianni, and Yang
for the case with $r$ odd and $s$ even \cite[Appendix A]{DKY}.
We count sign change from their case 
for individual terms in the defining state-sum expression,
and verify that overall,
the factors $\mathrm{TV}_{3,1}$ and $\mathrm{TV}_{r,s}$
are result of proportional change from factors in their case.

We denote by $\mathrm{ev}_{r,s}\colon\Rational(q^{1/2})\to\Complex$
the evaluation
which assigns 
the abstract root of unity $q^{1/2}$ to be $e^{\sqrt{-1}\cdot\pi s/r}$.

For any odd integer $r\geq3$,
recall that 
$I_r=\{0,1,\cdots,r-2\}$ denotes the set of colors on this level.
It contains the subset of even colors $I'_r=\{0,2,\cdots,r-3\}$
and also $I_3=\{0,1\}$.
For any finite simplicial $3$--complex
$\mathscr{T}=(V,E,F,T)$.
From any coloring $c\colon E\to I_r$,
we obtain a pair of colorings $c_3\colon E\to I_3$ and $c'\colon E\to I'_r$
as follows: For any $e\in E$, we assign $c_3(e)=0$ and $c'(e)=c(e)$ if $c(e)$ is even,
or $c_3(e)=1$ and $c'(e)=r-2-c(e)$ if $c(e)$ is odd.
By observation,
this operation preserves admissible colorings,
and yields a bijective correspondence 
between $\mathcal{A}_r$ and $\mathcal{A}_3\times\mathcal{A}'_r$.

\begin{lemma}\label{x_s_even}
Let $\mathscr{T}=(V,E,F,T)$ be any finite simplicial $3$--complex.
Let $r\geq3$ be an odd integer and $s$ be an integer coprime to $r$.
Adopt Notation \ref{color_terms}.
Identify $\mathcal{A}_r=\mathcal{A}_3\times\mathcal{A}'_r$.
If $s$ is even,
then,
for any $x\in E\sqcup F\sqcup T$ and any $c=(c_3,c')\in \mathcal{A}_r$,
$$\mathrm{ev}_{r,s}\left(|x|_c\right)=\mathrm{ev}_{3,2}\left(|x|_{c_3}\right)\cdot \mathrm{ev}_{r,s}\left(|x|_{c'}\right),$$
and hence,
$$\mathrm{ev}_{r,s}\left(|\mathscr{T}|_c\right)=
\mathrm{ev}_{3,2}\left(|\mathscr{T}|_{c_3}\right)\cdot \mathrm{ev}_{r,s}\left(|\mathscr{T}|_{c'}\right).$$
\end{lemma}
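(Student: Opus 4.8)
The plan is to establish the factorization one cell at a time---for edges, faces, and tetrahedra---and then multiply over all cells to obtain the identity for $|\mathscr{T}|_c$. The arithmetic input I would isolate first is the behaviour of the quantum integers under $\mathrm{ev}_{r,s}$ when $s$ is even: since $\mathrm{ev}_{r,s}([n])=\sin(\pi sn/r)/\sin(\pi s/r)$ and $s$ even gives $\sin(\pi s-\theta)=-\sin\theta$, I would record the reflection identity $\mathrm{ev}_{r,s}([r-n])=-\mathrm{ev}_{r,s}([n])$, together with the level-$3$ values $\mathrm{ev}_{3,2}([1])=1$ and $\mathrm{ev}_{3,2}([2])=-1$. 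Under the bijection $\mathcal{A}_r=\mathcal{A}_3\times\mathcal{A}'_r$, an edge of even color $i=c(e)$ has reduced color $i'=i$, whereas an edge of odd color $i$ has $i'=r-2-i$, so that $i'+1=r-(i+1)$; the reflection identity then restores each reduced odd quantum integer to its original value at the cost of a sign, and this sign is exactly what the level-$3$ factor must absorb.

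For edges the verification is immediate. Writing $i=c(e)$, I would compare $\mathrm{ev}_{r,s}((-1)^i[i+1])$ against $\mathrm{ev}_{3,2}(|e|_{c_3})\cdot\mathrm{ev}_{r,s}(|e|_{c'})$ in the two parity cases. When $i$ is even both the level-$3$ factor and the signs are trivial; when $i$ is odd one uses $\mathrm{ev}_{3,2}(-[2])=1$ together with the reflection identity, which supplies the sign carried by $\mathrm{ev}_{r,s}([r-(i+1)])=-\mathrm{ev}_{r,s}([i+1])$.

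The substance of the lemma lies in the face and tetrahedron terms, and here I would follow the computation of Detcherry, Kalfagianni, and Yang for the even case \cite[Appendix A]{DKY}. The engine is that reflecting each factor inside a quantum factorial and reindexing yields the complementary relation $\mathrm{ev}_{r,s}([m]!\,[r-1-m]!)=(-1)^m\,\mathrm{ev}_{r,s}([r-1]!)$, which converts the reduced factorials back to the original ones up to controlled signs. For a face with admissible colors $(i,j,k)$ the number of odd colors is even, since $i+j+k$ is even, so only the cases of zero or two odd colors occur; in each case I would express $S=(i+j+k)/2$ and the reduced factorial arguments through $S-i,\,S-j,\,S-k,\,S+1$, and check that the accumulated signs together with the overall sign $(-1)^{-S}$ collect into $\mathrm{ev}_{3,2}(|f|_{c_3})$, while the residual factorials reassemble into $\mathrm{ev}_{r,s}(|f|_{c'})$.

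I expect the tetrahedron term to be the main obstacle. In $|t|_c=\sum_z(-1)^z[z+1]!/(\prod_a[z-T_a]!\cdot\prod_b[Q_b-z]!)$ one must check that the summation index $z$ ranges compatibly after passing to the reduced colors, and that the signs $(-1)^z$ combined with the reflections of the many factorial arguments $z-T_a$ and $Q_b-z$ aggregate correctly into the level-$3$ and level-$r$ tetrahedron values; this bookkeeping, organised by the four face-parities and six edge-parities, is the most delicate point. Once the three cell identities are in hand, the final display follows by forming $|\mathscr{T}|_c=\prod_{e}|e|_c\cdot\prod_f|f|_c\cdot\prod_t|t|_c$ and factoring each term, so that the level-$3$ pieces collect into $\mathrm{ev}_{3,2}(|\mathscr{T}|_{c_3})$ and the level-$r$ pieces into $\mathrm{ev}_{r,s}(|\mathscr{T}|_{c'})$.
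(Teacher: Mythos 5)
Your proposal is correct and is essentially the paper's own treatment: the paper gives no independent proof of this lemma, referring instead to \cite[Lemma A.4]{DKY}, and your plan---direct verification for edges, then following the DKY Appendix A computation for faces and tetrahedra---is precisely that proof, with your reflection identity $\mathrm{ev}_{r,s}([r-n])=-\mathrm{ev}_{r,s}([n])$ for $s$ even (equivalently the complementary factorial relation) being exactly the parity-sensitive identity (A.1) that the paper singles out as the crux. Nothing further is needed.
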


The identities in Lemma \ref{x_s_even} are key to the proof of 
Detcherry, Kalfagianni, and Yang for the $s$ even case \cite[Theorem 2.9]{DKY}.
We refer to \cite[Lemma A.4]{DKY} for the proof.
Note that the identity (A.1) therein essentially relies on the parity assumption of $s$.

For proving the $s$ odd case,
our next few lemmas examine the sign difference between
$\mathrm{ev}_{r,s}(|x|_c)$ and $\mathrm{ev}_{r,r-s}(|x|_c)$,
and between 
$\mathrm{ev}_{r,s}(|\mathscr{T}|_c)$ and $\mathrm{ev}_{r,r-s}(|\mathscr{T}|_c)$.

\begin{lemma}\label{sign_change_x}
Let  $\mathscr{T}=(V,E,F,T)$ be any finite simplicial $3$--complex.
Let $r\geq3$ be an integer and $s$ be an integer coprime to $r$.
Adopt Notation \ref{color_terms}.
Then, for any $x\in E\sqcup F\sqcup T$ and any $c\in\mathcal{A}_r$,
$$\mathrm{ev}_{r,s}\left(|x|_c\right)=(-1)^{\delta(x,c)}\cdot\mathrm{ev}_{r,r-s}\left(|x|_c\right),$$
where $\delta(x,c)\in\Integral$ is assigned as follows.
\begin{enumerate}
\item 
For $x\in E$, having color $i$ under $c$,
$$\delta(x,c)=i.$$
\item
For $x\in F$, having edge colors $(i,j,k)$ under $c$,
$$\delta(x,c)=\frac{i^2+j^2+k^2}2.$$
\item
For $x\in T$, having edge colors $(i,j,k),(i,m,n),(j,m,n),(k,l,n)$
on each face under $c$,
$$\delta(x,c)=i+j+k+l+m+n+\frac{il+jm+kn}2.$$
\end{enumerate}
\end{lemma}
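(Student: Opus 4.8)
The plan is to reduce the whole statement to a single sign identity for quantum integers, and then to bookkeep binomial exponents termwise. Under $s\mapsto r-s$ the abstract square root evaluates as $\mathrm{ev}_{r,r-s}(q^{1/2})=e^{\sqrt{-1}\cdot\pi(r-s)/r}=-e^{-\sqrt{-1}\cdot\pi s/r}$, so that $q^{n/2}$ is sent to $(-1)^{n}\cdot\mathrm{ev}_{r,s}(q^{-n/2})$, while $q^{1/2}-q^{-1/2}$ is sent to the same value by both evaluations. Substituting this into $[n]=(q^{n/2}-q^{-n/2})/(q^{1/2}-q^{-1/2})$, I would first record the basic relation
\[
\mathrm{ev}_{r,r-s}\left([n]\right)=(-1)^{n-1}\cdot\mathrm{ev}_{r,s}\left([n]\right),
\]
and then, multiplying over $n=1,\dots,N$, its factorial consequence $\mathrm{ev}_{r,r-s}([N]!)=(-1)^{N(N-1)/2}\cdot\mathrm{ev}_{r,s}([N]!)$. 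Since the remaining ingredients of Notation \ref{color_terms} — the prefactors $(-1)^{i}$, $(-1)^{-S}$, $(-1)^{z}$ — lie in $\Rational$ and are fixed by both evaluations, the entire computation becomes a matter of collecting these $\binom{\cdot}{2}$ exponents modulo $2$.

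The edge and face cases are then immediate. For an edge of color $i$ the lone factor $[i+1]$ contributes exponent $(i+1)-1=i$, giving $\delta(x,c)=i$. For a face with edge colors $(i,j,k)$ and $S=(i+j+k)/2$, the exponent is $\binom{S-i}{2}+\binom{S-j}{2}+\binom{S-k}{2}-\binom{S+1}{2}$; expanding $\binom{t}{2}=(t^2-t)/2$ and simplifying with $i+j+k=2S$ gives $-S(S+1)+(i^2+j^2+k^2)/2$, which is congruent to $(i^2+j^2+k^2)/2$ modulo $2$ because $S(S+1)$ is even. This is exactly the asserted $\delta(x,c)$.

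The tetrahedral case is the real work, and the obstacle is that $|t|_{c}$ is a sum over $z$ rather than a single monomial, so a uniform sign can be pulled out only if the sign of each summand is independent of $z$. I would write the exponent of the $z$-th summand as $E_z=\binom{z+1}{2}-\sum_{a}\binom{z-T_a}{2}-\sum_{b}\binom{Q_b-z}{2}$ and expand via $\binom{t}{2}=(t^2-t)/2$. Using the identities $\sum_a T_a=\sum_b Q_b=i+j+k+l+m+n$ (each of the six colors occurs in exactly two of the $T_a$ and two of the $Q_b$), the coefficient of $z^2$ is $-3$ and that of $z$ is $1+2\sum_a T_a$, so modulo $2$ the $z$-dependent part of $E_z$ collapses to $z^2+z=z(z+1)\equiv 0$; this is the structural fact that lets the common sign factor out of the sum. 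It then remains to evaluate the $z$-independent constant, which reduces to $-\tfrac12\big(\sum_a T_a^2+\sum_b Q_b^2\big)$, and to match it modulo $2$ with $\delta(t,c)=i+j+k+l+m+n+(il+jm+kn)/2$. The final identification is an elementary but delicate parity computation: expanding the squares regroups $\sum_a T_a^2+\sum_b Q_b^2$ into single-color squares plus all pairwise products, and the four admissibility congruences $i+j+k\equiv i+m+n\equiv j+l+n\equiv k+l+m\equiv 0\bmod 2$ are precisely what force the adjacent-edge contributions to cancel, leaving only the opposite-edge products $il,jm,kn$. I expect this last mod-$2$ (in effect mod-$8$) matching to be the most error-prone step, and I would verify it on a few admissible colorings as a sanity check.
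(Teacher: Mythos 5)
Your route is the same as the paper's: reduce everything to the sign rule $\mathrm{ev}_{r,r-s}([n])=(-1)^{n-1}\,\mathrm{ev}_{r,s}([n])$, hence $\mathrm{ev}_{r,r-s}([N]!)=(-1)^{N(N-1)/2}\,\mathrm{ev}_{r,s}([N]!)$, and then track these exponents through Notation \ref{color_terms}. Your edge and face cases are complete and correct, and your tetrahedral computation that the $z$-dependent part of the exponent is $-3z^2+(1+2W)z\equiv z(z+1)\equiv 0\bmod 2$ (with $W=\sum_a T_a=\sum_b Q_b$) correctly establishes the structural fact that a single sign factors out of the sum over $z$; this agrees modulo $2$ with the paper's version of the same step. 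But your proof stops exactly at the nontrivial point. You never prove the congruence $\tfrac12\bigl(\sum_a T_a^2+\sum_b Q_b^2\bigr)\equiv i+j+k+l+m+n+\tfrac12(il+jm+kn)\bmod 2$; you only announce it as ``the most error-prone step'' and propose to check it on a few colorings, which is not a proof. (You also need, and do not prove, that admissibility forces $il+jm+kn$ to be even, without which $\delta(x,c)$ is not even an integer.) This congruence is the combinatorial heart of the tetrahedron case, and it is what the entire second half of the paper's proof is devoted to: one writes $\sum_a T_a^2+\sum_b Q_b^2=X^2+Y^2+Z^2+XY+XZ+YZ-(il+jm+kn)$ with $X=i+l$, $Y=j+m$, $Z=k+n$, observes $X^2+Y^2+Z^2+XY+XZ+YZ=(X+Y)(X+Z)+Y^2+Z^2$, and uses admissibility to show $(X,Y,Z)\bmod 2$ is $(0,0,0)$ or $(1,1,1)$, whence this quantity is $\equiv 2(X+Y+Z)\bmod 4$.

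Moreover, the mechanism you sketch for this missing step would not go through as described. You expect the four parity constraints to ``force the adjacent-edge contributions to cancel, leaving only the opposite-edge products.'' After clearing the factor $\tfrac12$, the statement you need is a congruence modulo $4$, namely $\sum_x x^2+\sum_{\mathrm{adjacent}}xy\equiv 2(i+j+k+l+m+n)\bmod 4$, and products of odd colors are not determined modulo $4$ by their parities (compare $1\cdot 1$ with $1\cdot 3$); so no termwise parity bookkeeping of the fifteen quadratic monomials can conclude. What makes the computation work is precisely a regrouping of the monomials into products of quantities that admissibility forces to be even, such as $(X+Y)(X+Z)$ above --- that is, the opposite-edge substitution the paper introduces. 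So what is missing is a genuine idea, not merely an unchecked calculation, and the proposal as written has a gap in the tetrahedron case.
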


\begin{proof}
We prove for $x\in T$.
The formulas with $x\in E$ and $x\in F$ can be proved by similar means,
and are simpler.
Suppose the tetrahedron $x$ has edge colors $i,j,k,l,m,n$ given by $c$.
Note that the value of the quantum integer only change by a sign 
determined by its parity
$$\mathrm{ev}_{r,s}([w])=(-1)^{w-1}\cdot\mathrm{ev}_{r,r-s}([w]).$$
For any fixed $z$, the total sign change of the summand is $-1$ to the power
\begin{eqnarray*}
& &\frac{(z+1)z}2+\sum_a \frac{(z-T_a)(z-T_a-1)}{2}+\sum_b \frac{(Q_b-z)(Q_b-z-1)}{2}\\
%&=&\frac12\left\{(z^2+z)+\sum_a (T_a^2+(-2z+1)T_a+(z^2-z))+\sum_b (Q_b^2+(-2z-1)Q_b+(z^2+z))\right\}\\
%&=&\frac12\left\{
%\sum_a T_a^2+(-2z+1)\cdot\sum_a T_a+\sum_b Q_b^2+(-2z-1)\cdot\sum_b Q_b+(1+4-3)\cdot z^2+(1-4+3)z\right\}\\
&=&4z^2-2zW+\frac12\cdot\left(\sum_a T_a^2+\sum_b Q_b^2\right)
\end{eqnarray*}
where
$W=i+j+k+l+m+n=\sum_a T_a=\sum_b Q_b$ is an integer.
The first two terms are even integers, 
having no effect to the total change of sign,
and the last term is independent of $z$.
Therefore, we obtain
$$\mathrm{ev}_{r,s}(|x|_c)=(-1)^{\delta(x,c)}\cdot\mathrm{ev}_{r,r-s}(|x|_c).$$
where
$$\delta(x,c)\equiv\frac12\cdot\left(\sum_a T_a^2+\sum_b Q_b^2\right)\mod2.$$
The expression $\sum_a T_a^2+\sum_b Q_b^2$ 
is equal to the sum of all the quadratic monomials in $i,j,k,l,m,n$,
namely, $i^2+ij+\cdots+in+j^2+jk+\cdots+mn$.
We rearrange
$$\sum_a T_a^2+\sum_b Q_b^2=X^2+Y^2+Z^2+XY+XZ+YZ-il-jm-kn$$
where $X=i+l$, $Y=j+m$, and $Z=k+n$.
Note that the parity pattern of $(X,Y,Z)$ may only be
$(0,0,0)$ or $(1,1,1)$, up to permutation of the components.
Indeed, the only possible patterns of $(i_3+l_3,j_3+m_3,k_3+n_3)$
are $(0,0,0)$, $(1,1,1)$, and $(0,2,2)$,
up to permutation of the components,
because of admissible coloring.
So, the part 
$X^2+Y^2+Z^2+XY+XZ+YZ=(X+Y)(X+Z)+Y^2+Z^2$
is congruent to $0$ modulo $4$ if $(X,Y,Z)\equiv(0,0,0)\bmod2$,
or congruent to $2$ modulo $4$ if $(X,Y,Z)\equiv(1,1,1)\bmod2$.
In both cases, we see that 
$(X^2+Y^2+Z^2+XY+XZ+YZ)/2\equiv X+Y+Z\bmod 2$.
This yields
$\delta(x,c)\equiv X+Y+Z-(il+jm+kn)/2\equiv 
i+j+k+l+m+n+(il+jm+kn)/2\bmod2$,
as desired.
\end{proof}

\begin{lemma}\label{sign_change_x_more}
Let  $\mathscr{T}=(V,E,F,T)$ be any finite simplicial $3$--complex.
Let $r\geq3$ be an odd integer and $s$ be an integer coprime to $r$.
Adopt Notation \ref{color_terms}.
Identify $\mathcal{A}_r=\mathcal{A}_3\times\mathcal{A}'_r$.
Let $\delta\colon (E\sqcup F\sqcup T)\times\mathcal{A}_r\to\Integral$ 
be expressed as in Lemma \ref{sign_change_x}.
\begin{enumerate}
\item 
For any $x\in E\sqcup F$ and any $c=(c_3,c')\in\mathcal{A}_r$, 
$$\delta(x,c)\equiv \delta(x,c_3)\mod2,$$
treating $\mathcal{A}_3$ as a subset of $\mathcal{A}_r$.
\item
For $x\in T$, having edge colors $(i,j,k),(i,m,n),(j,m,n),(k,l,n)$
on each face under $c$,
$$\delta(x,c)\equiv \delta(x,c_3)+\lambda(x,c)\mod2,$$
where
$$\lambda(x,c)=\frac{i_3l'+l_3i'+j_3m'+m_3j'+k_3n'+n_3k'}2$$.
\end{enumerate}
\end{lemma}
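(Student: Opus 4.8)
The plan is to push everything through the parametrization underlying the bijection $\mathcal{A}_r\cong\mathcal{A}_3\times\mathcal{A}'_r$. For a single edge carrying color $i$ under $c$, the recipe recovers $i$ from $(i_3,i')$ by $i=i'$ when $i_3=0$ and $i=(r-2)-i'$ when $i_3=1$. Two elementary consequences will be used repeatedly: first, $i\equiv i_3\pmod 2$, since the parity of a color is exactly what its $\mathcal{A}_3$-component records; and second, $i'$ is always even, because $I'_r$ consists of even colors (here the hypothesis that $r$ is odd enters). I will abbreviate $R=r-2$, which is odd, so that $R^2\equiv 1\pmod 4$ and $Ri'\equiv i'\pmod 4$ whenever $i'$ is even.

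For part (1), the edge case is immediate: $\delta(x,c)=i\equiv i_3=\delta(x,c_3)\pmod 2$. For a face with edge colors $(i,j,k)$ I would invoke the congruence $m^2\equiv(m\bmod 2)^2\pmod 4$, valid for every integer $m$; applied to $i,j,k$ this gives $i^2+j^2+k^2\equiv i_3^2+j_3^2+k_3^2\pmod 4$, and halving yields $\delta(x,c)\equiv\delta(x,c_3)\pmod 2$, as claimed.

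For part (2) I first separate the linear and quadratic contributions to the formula for $\delta(x,c)$ recorded in Lemma \ref{sign_change_x}. The linear part $i+j+k+l+m+n$ agrees with $i_3+j_3+k_3+l_3+m_3+n_3$ modulo $2$ by parity, so it contributes nothing. It then suffices to establish, for each of the three products occurring in $(il+jm+kn)/2$, the refined congruence $il-i_3l_3\equiv i_3l'+l_3i'\pmod 4$ (and symmetrically for $jm$ and $kn$); adding these three congruences and halving produces precisely $\lambda(x,c)$.

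The refined congruence I would verify by a four-case analysis on $(i_3,l_3)\in\{0,1\}^2$, substituting $i=i'$ or $R-i'$ and $l=l'$ or $R-l'$ accordingly. Throughout one uses $i'l'\equiv 0\pmod 4$ (both factors even), together with $Ri'\equiv i'$, $Rl'\equiv l'$, and $R^2\equiv 1\pmod 4$; the only case demanding a little care is $(i_3,l_3)=(1,1)$, where the leftover $-(i'+l')$ must be matched against $i'+l'$ via $i'+l'\equiv 0\pmod 2$. This mod-$4$ bookkeeping with the odd shift $R=r-2$ is the main obstacle, although each individual case reduces to a short computation. Reassembling the vanishing linear part with the three quadratic congruences then delivers $\delta(x,c)\equiv\delta(x,c_3)+\lambda(x,c)\pmod 2$.
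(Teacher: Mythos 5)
Your proof is correct and follows essentially the same route as the paper: both arguments split $\delta(x,c)$ into its linear part (which matches $\delta(x,c_3)$ mod $2$ by parity) and its quadratic part, and both rest on the same key congruence $il \equiv i_3l_3 + i_3l' + l_3i' \pmod 4$, summed over the three opposite-edge pairs and halved to produce $\lambda(x,c)$. The only difference is organizational: the paper verifies that congruence in one stroke by expanding the closed-form relation $c(e)=(r-2)c_3(e)+c'(e)-2c_3(e)c'(e)$ modulo $4$, whereas you check the same identity by a four-case analysis on $(i_3,l_3)$ using $i'l'\equiv 0$, $(r-2)i'\equiv i'$, and $(r-2)^2\equiv 1 \pmod 4$.
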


\begin{proof}
We make use of the relation 
$c(e)=c_3(e)\cdot(r-2-c'(e))+(1-c_3(e))\cdot c'(e)
=(r-2)\cdot c_3(e)-c'(e)-2\cdot c_3(e)\cdot c'(e)$, for any $e\in E$.
Since $r$ is odd and $i',l'$ are even,
we obtain
\begin{eqnarray*}
il&=&((r-2)i_3-i'-2i_3i')\cdot((r-2)l_3-l'-2l_3l')\\
&\equiv& (r-2)^2i_3l_3-(r-2)\cdot(i_3l'+l_3i')\\
&\equiv& i_3l_3+(i_3l'+l_3i')\mod 4
\end{eqnarray*}
and similarly we manipulate $jk$ and $kn$.
Taking the sum, we obtain
$$il+jm+kn\equiv i_3l_3+j_3m_3+k_3n_3+2\cdot\lambda(x,c)\mod4.$$
Moreover, we observe 
$$i+j+k+l+m+n\equiv i_3+j_3+k_3+l_3+m_3+n_3\mod 2.$$
By Lemma \ref{sign_change_x}, the above congruence equalities imply
$\delta(x,c)\equiv \delta(x,c_3)+\lambda(x,c)\bmod2$,
as desired.
\end{proof}

For any $c_3\in\mathcal{A}_3$, 
there is a canonical subsurface $\mathcal{S}(c_3)\subset M$,
in normal position with respect to $\mathscr{T}$,
such that $c_3(e)$
indicates the number of intersection points of any edge
$e\in E$ with $\mathcal{S}(c_3)$.
The subsurface $\mathcal{S}(c_3)$ is formed by taking
one normal disk in each tetrahedron that has nonzero color,
and then taking their union matching the sides.
The types of the normal disks (among four triangular types and three quadrilateral types
for each tetrahedron) are forced by the admissible coloring $c_3$.
The subsurface $\mathcal{S}(c_3)$ is closed, as $M$ does not have boundary.

\begin{lemma}\label{sign_change_overall}
Let $(M,\mathscr{T})$ be 
any triangulated closed $3$--manifold.
Let $r\geq3$ be an odd integer and $s$ be an integer coprime to $r$.
Adopt Notation \ref{color_terms}.
Identify $\mathcal{A}_r=\mathcal{A}_3\times\mathcal{A}'_r$.
Then, for any $c=(c_3,c')\in\mathcal{A}_r$,
$$\mathrm{ev}_{r,s}\left(|\mathscr{T}|_c\right)=
(-1)^{\chi(\mathcal{S}(c_3))}\cdot\mathrm{ev}_{r,r-s}\left(|\mathscr{T}|_c\right),$$
where $\chi(\mathcal{S}(c_3))$ 
denotes the Euler characteristic of the normal subsurface
$\mathcal{S}(c_3)\subset M$ determined by $c_3$.
\end{lemma}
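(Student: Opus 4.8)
The plan is to compute the total sign relating $\mathrm{ev}_{r,s}(|\mathscr{T}|_c)$ to $\mathrm{ev}_{r,r-s}(|\mathscr{T}|_c)$ as a product of the local signs supplied by Lemma~\ref{sign_change_x}. Since $|\mathscr{T}|_c=\prod_{x\in E\sqcup F\sqcup T}|x|_c$ by Notation~\ref{color_terms}(5), taking the product over all simplices of the identities $\mathrm{ev}_{r,s}(|x|_c)=(-1)^{\delta(x,c)}\mathrm{ev}_{r,r-s}(|x|_c)$ yields
$$\mathrm{ev}_{r,s}\left(|\mathscr{T}|_c\right)=(-1)^{\Delta}\cdot \mathrm{ev}_{r,r-s}\left(|\mathscr{T}|_c\right),\qquad \Delta=\sum_{x\in E\sqcup F\sqcup T}\delta(x,c),$$
so the statement reduces to the single congruence $\Delta\equiv\chi(\mathcal{S}(c_3))\bmod 2$. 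First I would feed in Lemma~\ref{sign_change_x_more}, which replaces $\delta(x,c)$ by $\delta(x,c_3)$ on edges and faces and by $\delta(x,c_3)+\lambda(x,c)$ on tetrahedra, giving $\Delta\equiv\sum_{x}\delta(x,c_3)+\sum_{t\in T}\lambda(t,c)\bmod 2$. The task then separates into showing $\sum_{x}\delta(x,c_3)\equiv\chi(\mathcal{S}(c_3))$ and $\sum_{t}\lambda(t,c)\equiv0$.

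For the first piece I would recognize $\sum_{x}\delta(x,c_3)$ as counting, modulo $2$, the cells of the normal surface $\mathcal{S}(c_3)$ in its induced CW structure. Indeed $\delta(e,c_3)=c_3(e)$ sums to the number of intersection points of $\mathcal{S}(c_3)$ with the edges, i.e.\ the number of $0$--cells; on a face with $c_3$--colors $i,j,k\in\{0,1\}$ one has $\delta(f,c_3)=(i^2+j^2+k^2)/2=(i+j+k)/2$, which is the number of normal arcs in the face, i.e.\ the number of $1$--cells; and a short case check over the eight admissible $c_3$--colorings of a tetrahedron shows that $\delta(t,c_3)\equiv1\bmod2$ exactly when the tetrahedron carries a single normal triangle or quadrilateral, and $\delta(t,c_3)\equiv0$ otherwise, so the tetrahedral sum counts the $2$--cells. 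Writing $V,E,F$ for these three counts, $\sum_{x}\delta(x,c_3)\equiv V+E+F\equiv V-E+F=\chi(\mathcal{S}(c_3))\bmod2$.

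The hard part will be the vanishing $\sum_{t}\lambda(t,c)\equiv0\bmod2$. Setting $\phi=c_3$ and $\psi=(c'/2)\bmod 2$, the evenness of the $c'$--values turns each $\lambda(t,c)$ into $\sum\phi(e)\psi(f)$ over the ordered pairs $(e,f)$ of opposite edges of $t$; regrouping by the second edge gives
$$\sum_{t\in T}\lambda(t,c)\equiv\sum_{f\in E}\psi(f)\left(\sum_{t\ni f}\phi(\mathrm{opp}_t(f))\right)\bmod2,$$
where $\mathrm{opp}_t(f)$ denotes the edge of $t$ opposite to $f$. It therefore suffices to prove that for every edge $f$ the inner sum vanishes, and here I would use that $M$ is a closed manifold, so the link of $f$ is a circle: the faces $F_1,\dots,F_k$ containing $f$ and the tetrahedra $t_1,\dots,t_k$ lying between consecutive faces are arranged cyclically, and admissibility of $c_3$ on the face of $t_i$ through a fixed endpoint $P$ of $f$ forces $\phi(\mathrm{opp}_{t_i}(f))\equiv\phi(a_i)+\phi(a_{i+1})\bmod2$, where $a_i$ is the edge of $F_i$ joining $P$ to the opposite vertex (indices read cyclically, $a_{k+1}=a_1$). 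Summing over $i$ then telescopes to $0$.

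Identifying this telescoping is the only step that genuinely uses the manifold hypothesis together with the admissibility of $c_3$, and it is the main obstacle; everything else is bookkeeping with the two preceding lemmas. Combining the two pieces yields $\Delta\equiv\chi(\mathcal{S}(c_3))\bmod2$, which is the asserted identity.
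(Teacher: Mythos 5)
Your proof is correct, and its skeleton coincides with the paper's: both reduce the claim, via Lemmas \ref{sign_change_x} and \ref{sign_change_x_more}, to the two congruences $\sum_x\delta(x,c_3)\equiv\chi(\mathcal{S}(c_3))$ and $\sum_{t\in T}\lambda(t,c)\equiv 0$ modulo $2$, and both handle the second sum by regrouping it edge by edge. Where you differ is in how each congruence is verified, and in both places the comparison is instructive. In the cell count, your tetrahedron case check is the accurate one: a tetrahedron carrying a normal triangle (say $i=j=n=1$, $k=l=m=0$) has $\delta(t,c_3)=3\equiv1$, exactly like a quadrilateral-carrying one (where $\delta=5$); this is confirmed by direct evaluation, since such a tetrahedron has $|t|_{c_3}=-[2]$, which changes sign under $s\mapsto r-s$. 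The paper instead asserts that only quadrilateral-carrying tetrahedra produce an odd $\delta$ and compensates by proving that the number of triangular normal disks is even (from the relation $3\nu_{2,\triangle}+4\nu_{2,\square}=2\nu_1$ in its proof); the two bookkeepings agree modulo $2$ precisely because of that evenness, but your version needs no such compensation and records the sign contributions correctly disk by disk. For the vanishing of $\sum_t\lambda(t,c)$, the paper argues topologically: the link of an edge is a null-homotopic loop, hence has vanishing mod-$2$ intersection number with the closed surface $\mathcal{S}(c_3)$. Your telescoping of the admissibility parities $\phi(\mathrm{opp}_{t_i}(f))\equiv\phi(a_i)+\phi(a_{i+1})$ around the link circle is a purely combinatorial substitute: it is more elementary (no intersection theory needed), at the cost of setting up the cyclic labelling, and it makes transparent that the only input from the manifold hypothesis --- in both proofs --- is that edge links are circles.
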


\begin{proof}
For all $x\in E\sqcup F\sqcup T$,
the nonempty intersections $x\cap \mathcal{S}(c_3)$
give rise to a polygonal cell decomposition of $\mathcal{S}(c_3)$.
Denote by $\nu_0,\nu_1,\nu_{2,\triangle},\nu_{2,\square}$ 
the numbers of vertices, edges, triangular normal disks,
and quadrilateral normal disks in $\mathcal{S}(c_3)$, respectively.

Let $\delta\colon (E\sqcup F\sqcup T)\times\mathcal{A}_r\to\Integral$ 
be expressed as in Lemma \ref{sign_change_x}.
For any $x\in E\sqcup F\sqcup T$, it is direct to check
that $\delta(x,c_3)=1$ if and only if $x\cap\mathcal{S}(c_3)$ is 
a vertex or an edge or a quadrilateral normal disk
of $\mathcal{S}(c_3)$, otherwise $\delta(x,c_3)=0$,
treating $\mathcal{A}_3$ as a subset of $\mathcal{A}_r$.
This means 
$\nu_0=\sum_{x\in E}\delta(x,c_3)$, $\nu_1=\sum_{x\in F}\delta(x,c_3)$,
and $\nu_{2,\square}=\sum_{x\in T}\delta(x,c_3)$.
Moreover, 
the relation $3\cdot\nu_{2,\triangle}+4\cdot\nu_{2,\square}=2\cdot\nu_1$
implies that $\nu_{2,\triangle}$ is even,
as $\mathcal{S}(c_3)$ is closed.
By Lemma \ref{sign_change_x_more}, we obtain
\begin{eqnarray*}
\chi(\mathcal{S}(c_3))&=&\nu_0-\nu_1+\nu_{2,\triangle}+\nu_{2,\square}\\
&\equiv& \sum_{x\in E\sqcup F\sqcup T}\delta(x,c_3)\\
&\equiv& \sum_{x\in E\sqcup F\sqcup T}\delta(x,c)+\sum_{x\in T}\lambda(x,c)\mod2.
\end{eqnarray*}

Therefore, 
to derive the asserted formula $\mathrm{ev}_{r,s}(|\mathscr{T}|_c)=
(-1)^{\chi(\mathcal{S}(c_3))}\cdot\mathrm{ev}_{r,r-s}(|\mathscr{T}|_c)$
from Lemma \ref{sign_change_x},
it remains to prove
$$\sum_{x\in T}\lambda(x,c)\equiv0\mod2.$$

To this end, we observe that $\sum_{x\in T}\lambda(x,c)$ is the sum of $c'(e)/2$,
where $e$ ranges over the edges of $x$
whose opposite edge in $x$ meets $\mathcal{S}(c_3)$.
For any edge $e^*\in E$, the link of $e^*$ refers to the union 
of all the opposite edges $e_1,\cdots,e_h$
in all the tetrahedra $t_1,\cdots,t_h\in T$ that contain $e^*$,
denoted as $\mathrm{lk}(e^*)\subset M$.
The link $\mathrm{lk}(e^*)$ is a contractible loop in $M$
(bounding a disk transverse to $e^*$).
On the other hand,
any edge either misses $\mathcal{S}(c_3)$, or meets $\mathcal{S}(c_3)$ at exactly one point.
Then the number of edges in $\mathrm{lk}(e^*)$
that meet $\mathcal{S}(c_3)$ must be even,
and the integer $c'(e^*)/2$ contributes
exactly this even number of times to $\sum_{x\in T}\lambda(x,c)$.
Because $\sum_{x\in T}\lambda(x,c)$ is the total of the contribution
from each edge $e^*\in E$, we conclude
$\sum_{x\in T}\lambda(x,c)\equiv0\bmod2$ as desired,
completing the proof.
\end{proof}

\begin{lemma}\label{T_s_odd}
Let $(M,\mathscr{T})$ be 
any triangulated closed $3$--manifold.
Let $r\geq3$ be an odd integer and $s$ be an integer coprime to $r$.
Adopt Notation \ref{color_terms}.
Identify $\mathcal{A}_r=\mathcal{A}_3\times\mathcal{A}'_r$.
If $s$ is odd,
then,
$$\mathrm{ev}_{r,s}\left(|\mathscr{T}|_c\right)=
\mathrm{ev}_{3,1}\left(|\mathscr{T}|_{c_3}\right)\cdot \mathrm{ev}_{r,r-s}\left(|\mathscr{T}|_{c'}\right).$$
\end{lemma}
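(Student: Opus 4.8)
The plan is to reduce the odd-$s$ statement to the already-known even case recorded in Lemma \ref{x_s_even}, using two applications of the sign-change lemma (Lemma \ref{sign_change_overall}) as a bridge. The point that makes this possible is that, with $r$ odd and $s$ odd, the complementary parameter $r-s$ is even; so although Lemma \ref{x_s_even} does not apply to $\mathrm{ev}_{r,s}$ directly, it does apply to $\mathrm{ev}_{r,r-s}$.

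First I would apply Lemma \ref{sign_change_overall} to the coloring $c=(c_3,c')$ at level $r$, obtaining
$$\mathrm{ev}_{r,s}\left(|\mathscr{T}|_c\right)=(-1)^{\chi(\mathcal{S}(c_3))}\cdot\mathrm{ev}_{r,r-s}\left(|\mathscr{T}|_c\right).$$
Since $r-s$ is even, I would then apply Lemma \ref{x_s_even} with $r-s$ in place of $s$ to split the right-hand factor,
$$\mathrm{ev}_{r,r-s}\left(|\mathscr{T}|_c\right)=\mathrm{ev}_{3,2}\left(|\mathscr{T}|_{c_3}\right)\cdot\mathrm{ev}_{r,r-s}\left(|\mathscr{T}|_{c'}\right).$$
Combining the two displays writes $\mathrm{ev}_{r,s}(|\mathscr{T}|_c)$ as $(-1)^{\chi(\mathcal{S}(c_3))}\cdot\mathrm{ev}_{3,2}(|\mathscr{T}|_{c_3})\cdot\mathrm{ev}_{r,r-s}(|\mathscr{T}|_{c'})$, so it remains only to identify the leading two factors with $\mathrm{ev}_{3,1}(|\mathscr{T}|_{c_3})$.

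For that I would invoke Lemma \ref{sign_change_overall} a second time, now at level $r=3$ with $s=1$ (so $r-s=2$). The level-$3$ coloring $c_3\in\mathcal{A}_3$ decomposes as $(c_3,\mathbf{0})$, since $\mathcal{A}'_3$ consists only of the zero coloring, and its associated normal subsurface is again $\mathcal{S}(c_3)$. The lemma then gives
$$\mathrm{ev}_{3,1}\left(|\mathscr{T}|_{c_3}\right)=(-1)^{\chi(\mathcal{S}(c_3))}\cdot\mathrm{ev}_{3,2}\left(|\mathscr{T}|_{c_3}\right),$$
which is precisely the factor needed; substituting completes the asserted identity.

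Essentially all of the real work has already been carried out in Lemmas \ref{x_s_even} and \ref{sign_change_overall}, so the remaining difficulty is purely organizational. The step I expect to require the most care is verifying that the sign $(-1)^{\chi(\mathcal{S}(c_3))}$ arising from the level-$r$ application of Lemma \ref{sign_change_overall} is literally the same sign arising from its level-$3$ application. This rests on the observation that the normal subsurface $\mathcal{S}(c_3)$ is determined by the $\mathcal{A}_3$-component of the coloring alone and is insensitive to the ambient level, so that the two Euler characteristics agree and the sign produced at level $r$ is exactly the one absorbed in passing from $\mathrm{ev}_{3,2}(|\mathscr{T}|_{c_3})$ to $\mathrm{ev}_{3,1}(|\mathscr{T}|_{c_3})$.
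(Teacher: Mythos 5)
Your proposal is correct and takes essentially the same route as the paper's own proof: two applications of Lemma \ref{sign_change_overall} (once at level $r$ with $s$ odd, once at level $3$ with $s=1$), combined with the even case, Lemma \ref{x_s_even}, applied to the even parameter $r-s$. The point you flag as requiring care---that the normal surface $\mathcal{S}(c_3)$, and hence the sign $(-1)^{\chi(\mathcal{S}(c_3))}$, depends only on the $\mathcal{A}_3$-component and not on the ambient level---is exactly the observation the paper uses implicitly.
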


\begin{proof}
By Lemma \ref{sign_change_overall}, we obtain
$\mathrm{ev}_{r,s}(|\mathscr{T}|_c)=(-1)^{\chi(\mathcal{S}(c_3))}\cdot\mathrm{ev}_{r,r-s}(|\mathscr{T}|_c)$
and
$\mathrm{ev}_{3,1}(|\mathscr{T}|_{c_3})=(-1)^{\chi(\mathcal{S}(c_3))}\cdot\mathrm{ev}_{3,2}(|\mathscr{T}|_c)$.
Then the asserted identity follows from the $s$ even case (Lemma \ref{x_s_even}).
\end{proof}

To complete the proof of Theorem \ref{tv_r_odd_s_odd},
we observe
\begin{equation}\label{Y_s_both}
\mathrm{ev}_{r,s}\left(Y_r\right)
=
\begin{cases}
\mathrm{ev}_{3,2}\left(Y_3\right)\cdot \mathrm{ev}_{r,s}\left(Y'_r\right)&s\mbox{ even}\\
\mathrm{ev}_{3,1}\left(Y_3\right)\cdot \mathrm{ev}_{r,r-s}\left(Y'_r\right)&s\mbox{ odd}
\end{cases}
\end{equation}
where $Y_r=-(q^{1/2}-q^{-1/2})^2/2r$ and $Y'_r=-(q^{1/2}-q^{-1/2})^2/r$,
(indeed, $\mathrm{ev}_{r,s}(q^{1/2}-q^{-1/2})=-\sqrt{-1}\cdot2\sin(\pi s/r)$).

Let $M$ be any closed $3$--manifold.
If $r$ is odd and $s$ is even,
we obtain
\begin{eqnarray*}
\mathrm{TV}_{r,s}(M)
&=&\mathrm{ev}_{r,s}\left(Y_r\cdot\sum_{c\in\mathcal{A}_r}|\mathscr{T}|_c\right)\\
&=&\mathrm{ev}_{r,s}(Y_r)\cdot\sum_{c\in\mathcal{A}_r}\mathrm{ev}_{r,s}(|\mathscr{T}|_c)\\
&=&\mathrm{ev}_{3,1}(Y_3)\cdot\mathrm{ev}_{r,r-s}(Y'_r)
\cdot\sum_{c_3\in\mathcal{A}_3}\sum_{c'\in\mathcal{A}'_r}
\mathrm{ev}_{3,1}(|\mathscr{T}|_c)\cdot\mathrm{ev}_{r,r-s}(|\mathscr{T}|_{c'})\\
&=&
\mathrm{ev}_{3,1}\left(Y_3\cdot\sum_{c_3\in\mathcal{A}_3}|\mathscr{T}|_c\right)
\cdot
\mathrm{ev}_{r,r-s}\left(Y'_r\cdot\sum_{c'\in\mathcal{A}'_r}|\mathscr{T}|_{c'}\right)\\
&=&\mathrm{TV}_{3,1}(M)\cdot\mathrm{TV}'_{r,r-s}(M)
\end{eqnarray*}
by (\ref{tv_def}), (\ref{tv_refined_def}), (\ref{Y_s_both}), and Lemma \ref{T_s_odd}.
This completes the proof of Theorem \ref{tv_r_odd_s_odd}.

\bibliographystyle{amsalpha}

%\bibliography{../refs}

\end{document}